\newcounter{mgncount}
\newtheorem*{thm-}{Theorem}
\declaretheorem[name=Theorem,numberwithin=section]{thm}
\declaretheorem[name=Remark,style=remark,sibling=thm]{rem}
\declaretheorem[name=Lemma,sibling=thm]{lemma}
\declaretheorem[name=Proposition,sibling=thm]{prop}
\declaretheorem[name=Definition,style=definition,sibling=thm]{defn}
\declaretheorem[name=Question,numbered=no]{question}
\numberwithin{equation}{section}
\newcommand{\ti}{\tilde}
\newcommand{\sub}{\subset}
\newcommand{\bbR}{\mathbb{R}}
\newcommand{\bbS}{\mathbb{S}}
\newcommand{\al}{\alpha}
\newcommand{\ga}{\gamma}
\newcommand{\de}{\delta}
\newcommand{\la}{\lambda}
\newcommand{\si}{\sigma}
\newcommand{\Om}{\Omega}
\newcommand{\La}{\Lambda}
\newcommand{\Ph}{\Phi}
\newcommand{\cA}{\mathcal{A}}
\newcommand{\cB}{\mathcal{B}}
\newcommand{\cC}{\mathcal{C}}
\newcommand{\cF}{\mathcal{F}}
\newcommand{\cH}{\mathcal{H}}
\newcommand{\cK}{\mathcal{K}}
\newcommand{\n}{\nabla}
\newcommand{\fr}[2]{\frac{#1}{#2}}
\renewcommand{\(}{\left(}
\renewcommand{\)}{\right)}
\newcommand{\eq}[1]{\begin{equation}\begin{alignedat}{2} #1 \end{alignedat}\end{equation}}
\newcommand{\br}[1]{\left(#1\right)}
\newcommand{\abs}[1]{\lvert #1\rvert}
\newcommand{\Ra}{\Rightarrow}
\newcommand{\mrm}{\mathrm}
\newcommand{\q}{\quad}
\begin{document}
	\title[Capillary Christoffel-Minkowski problem]
	{Capillary Christoffel-Minkowski problem}
	\author[Y. Hu, M. N. Ivaki, J. Scheuer]{Yingxiang Hu, Mohammad N. Ivaki,  Julian Scheuer}
	
\begin{abstract}
The result of Guan and Ma (Invent. Math. 151 (2003)) states that if $\phi^{-1/k} : \bbS^n \to (0,\infty)$ is spherically convex, then $\phi$ arises as the $\sigma_k$ curvature (the $k$-th elementary symmetric function of the principal radii of curvature) of a strictly convex hypersurface. In this paper, we establish an analogous result in the capillary setting in the half-space for $\theta\in (0,\pi/2)$: if $\phi^{-1/k} : \cC_{\theta} \to (0,\infty)$ is a capillary function and spherically convex, then $\phi$ is the $\sigma_k$ curvature of a strictly convex capillary hypersurface.
\end{abstract}
	
\maketitle

\section{Introduction}
The Christoffel-Minkowski problem seeks the necessary and sufficient conditions on a Borel measure $\mu$ defined on the unit sphere $\mathbb{S}^n \subset \mathbb{R}^{n+1}$ for it to represent the $j$-th surface area measure of a convex body $K$ (i.e. a compact, convex set with non-empty interior).

For $j=n$, the problem is known as the Minkowski problem, and a complete solution was provided by Minkowski \cite{Min97, Min03}, Aleksandrov 
\cite{Ale56}, 
Nirenberg \cite{Nir57}, Pogorelov \cite{Pog52,Pog71}, and Cheng-Yau \cite{CY76}. They showed that a Borel measure $\mu$, whose support is not contained within a closed hemisphere, is the surface area measure of a convex body if and only if  
\eq{
\int_{\mathbb{S}^n} u \, d\mu(u) = 0.
}
Moreover, the solution is unique up to translations. For further details and generalizations, we refer the reader to \cite{Sch14}, \cite{Lut93, LO95, CW06, BLYZ13, HLYZ16, Li19, BBCY19, HXY21, GLW22, LXYZ24}, and the recent survey \cite{HYZ25}. Parabolic approaches are discussed, for example, in \cite{CW00, BIS19, LWW20, CL21, BG23}.

When $j=1$, the problem becomes the Christoffel problem, which was completely solved by Firey \cite{Fir67} and Berg \cite{Ber69} using methods based on Green's function and subharmonic functions.

For the intermediate case $1<j<n$, where the necessary and sufficient conditions have not been fully established, the problem remains largely open within the class of Borel measures. However, significant progress has been made in the case where $\mu$ has a smooth density with respect to the spherical Lebesgue measure. Notable results include those of Firey \cite{Fir70} for the rotationally symmetric case and Guan and Ma \cite{GM03} with Sheng, Trudinger, and Wang \cite{STW04} in the general case. In particular, in \cite{GM03}, the authors proved a constant rank theorem that guarantees the strict convexity of the solutions. Additional developments, including the extension within the $L_p$-Brunn-Minkowski theory and other ambient spaces, are provided in \cite{HMS04, GX18}, \cite{BIS21, HLX24, LW24}, and the problem was revisited in \cite{Iva19, BIS23b, Zha24} using parabolic approaches.

The geometry of capillary hypersurfaces, i.e. those which meet other hypersurfaces subject to a constant intersection angle, has received increasing attention during the past years. Many results, which are by now classical for closed hypersurfaces, were recently generalized to the capillary setting, such as the classification of stable CMC surfaces in a ball \cite{WX19}, or the proof of Alexandrov-Fenchel inequalities for capillary hypersurfaces in the half space \cite{MWWX24}, see also \cite{HWYZ24}. Also, curvature flows have been investigated in this setting, such as locally constrained mean curvature flows \cite{MW23,SW24,WW20} or flows in the Lorentzian setting \cite{KLS25}. The capillary Minkowski problem was recently studied in \cite{MWW23}.

In this paper, we investigate the existence of strictly convex solutions to the capillary Christoffel-Minkowski problem, which will be discussed below.

Let $\{E_i\}_{i=1}^{n+1}$ be the standard orthonormal basis of $\bbR^{n+1}$.
Let $\Sigma$ be a properly embedded, smooth, compact, connected and orientable hypersurface in $\overline{\mathbb{R}^{n+1}_+}$ with $\operatorname{int}(\Sigma)\subset \mathbb{R}^{n+1}_+$ and boundary $\partial \Sigma \subset \partial \mathbb{R}^{n+1}_+$. We say $\Sigma$ is a capillary hypersurface with the constant contact angle $\theta\in (0,\pi)$ if the following condition on $\partial\Sigma$ is met:
\begin{equation}
\langle \nu, E_{n+1} \rangle \equiv \cos\theta.
\end{equation}
Here, $\nu$ denotes the outer unit normal of $\Sigma$. The enclosed region by the hypersurface $\Sigma$ and the hyperplane $\partial \mathbb{R}^{n+1}_+$ is denoted by $\widehat{\Sigma}$.

We say $\Sigma$ is strictly convex if $\widehat{\Sigma}$ is a convex body and the second fundamental form of $\Sigma$ is positive-definite (which implies that $\Sigma$ does not contain a line segment). Throughout this paper, we work with strictly convex capillary hypersurfaces.

The capillary spherical cap of radius $r$ and intersecting with $\partial \mathbb{R}^{n+1}_+$ at a constant angle $\theta \in (0, \pi)$ is defined as
\eq{
\mathcal{C}_{\theta, r} := \left\{ x \in \overline{\mathbb{R}_+^{n+1}} \mid |x + r \cos \theta E_{n+1}| = r \right\}.
}
For simplicity, we put $\mathcal{C}_\theta = \mathcal{C}_{\theta, 1}$.

Let $\tilde{\nu}=\nu-\cos\theta E_{n+1}:\Sigma\to \partial \cC_{\theta}$ denote the capillary Gauss map of $\Sigma$, which is a diffeomorphism, see \autoref{lem:nudiffeo}.  The capillary support function of $\Sigma$, $s=s_{\Sigma}: \cC_{\theta}\to \bbR$, is defined as
\eq{
s(\zeta) =\langle \tilde{\nu}^{-1}(\zeta), \zeta + \cos\theta E_{n+1} \rangle,\q \zeta \in \cC_{\theta}.
}
For example, the capillary support function of $\cC_{\theta}$ is given by
\eq{
\ell(\zeta)=\sin^2\theta-\cos\theta \langle \zeta,E_{n+1}\rangle.
}

The capillary area measure of $\Sigma$ is defined as
\eq{
m_{\theta}(\cB) := \int_{\tilde{\nu}^{-1}(\cB)} \left( 1 - \cos\theta \langle \nu, E_{n+1} \rangle \right) d\cH^n,
}
for all Borel sets $\cB$ of $\cC_{\theta}$. Here, $\cH^n$ denotes the $n$-dimensional Hausdorff measure. For a smooth, strictly convex capillary hypersurface $\Sigma$ we have
\eq{
dm_{\theta} = \frac{\ell}{\mathcal{K}\circ\tilde{\nu}^{-1}} d\sigma,
}
where $d\sigma$ represents the area element of $\mathcal{C}_\theta$ and $\cK$ is the Gauss curvature of $\Sigma$.

\begin{question}[Capillary Minkowski problem]
Given a measure $m$ on the spherical cap $\mathcal{C}_\theta$, is there a strictly convex capillary hypersurface whose capillary area measure is $m$?	
\end{question}

For the regular case of this question, i.e., when $dm = \frac{\ell}{f}\, d\sigma$ and $f\in C^2(\cC_{\theta})$ is positive, the question is equivalent to finding a smooth, strictly convex capillary hypersurface whose Gauss curvature is $f$. The following existence result was proved in \cite{MWW23}.

\begin{thm}\label{Min P}
Let $\theta \in (0, \frac{\pi}{2}]$. Suppose $0 < \phi \in C^2(\cC_\theta)$ satisfies  
\begin{equation}
\int_{\cC_\theta} \langle \zeta, E_{i} \rangle \phi(\zeta) \, d\sigma = 0, \quad i = 1,2,\ldots,n,  
\end{equation}  
where $\{E_{i}\}_{i=1,\ldots,n}$ is the horizontal basis of $\partial \mathbb{R}^{n+1}_+$.  Then there exists a $C^{3,\alpha}$ strictly convex capillary hypersurface $\Sigma \subset \overline{\mathbb{R}^{n+1}_{+}}$ such that its Gauss-Kronecker curvature $\mathcal{K}$ satisfies  
\begin{equation} \label{capillary-Minkowski-problem}
\frac{1}{\mathcal{K}(\tilde{\nu}^{-1}(\zeta))} = \phi(\zeta), \quad \forall \zeta \in \cC_\theta.
\end{equation}  
Moreover, $\Sigma$ is unique up to a horizontal translation in $\overline{\mathbb{R}^{n+1}_{+}}$.  
\end{thm}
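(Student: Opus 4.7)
The plan is to reformulate \eqref{capillary-Minkowski-problem} as a Monge-Ampère equation on $\cC_{\theta}$ for the capillary support function $s$, and to solve it by the method of continuity combined with a priori estimates.

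\emph{Setting up the PDE.} For a strictly convex capillary hypersurface $\Sigma$ with support function $s\colon\cC_\theta\to\bbR$, the principal radii of curvature at $\tilde\nu^{-1}(\zeta)$ are the eigenvalues of the matrix $W[s]=\bar\nabla^2 s+s\,\bar g$, where $\bar g$ is the induced metric and $\bar\nabla$ the Levi-Civita connection on $\cC_\theta$. The equation takes the form $\det W[s]=\phi$ on $\cC_\theta$, supplemented by a natural Robin-type boundary condition on $\partial\cC_\theta$ that encodes the contact angle $\theta$. The orthogonality condition on the $E_i$ is precisely the Fredholm compatibility condition for the linearization of this boundary value problem, whose kernel is spanned by the horizontal translations $E_1,\dots,E_n$.

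\emph{Continuity method.} I would interpolate between $\phi_0$, corresponding to a fixed capillary cap $\cC_{\theta,r_0}$ of appropriate size, and $\phi_1=\phi$, along a smooth family $\phi_t$ preserving the orthogonality relations. Openness at each $t$ follows from the Fredholm alternative applied to the linearized Monge-Ampère operator, working modulo horizontal translations. Closedness reduces to uniform a priori estimates in $C^{3,\alpha}$.

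\emph{A priori estimates.} The $C^0$ bound uses the orthogonality condition to pin down the horizontal position, together with Minkowski-type integral identities relating $s$, $\phi$, and $\ell$. The $C^1$ bound follows from convexity and the $C^0$ bound. The main obstacle is the $C^2$ estimate, i.e.\ a two-sided bound on the eigenvalues of $W[s]$. The upper bound is obtained via the maximum principle applied to an auxiliary function of the form $\log\lambda_{\max}(W)+A\,s$; the delicate point is controlling the sign of the boundary term generated by the Robin condition on $\partial\cC_\theta$, and this is precisely where the restriction $\theta\in(0,\pi/2]$ enters. A uniform lower bound on the radii then follows from $\det W[s]=\phi>0$ once the upper bound is in hand. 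Finally, Evans-Krylov and Schauder theory upgrade the $C^2$ bound to $C^{3,\alpha}$, which closes the continuity argument. Uniqueness up to horizontal translation is obtained by linearizing along a convex combination of two solutions $s_1,s_2$: the difference $u=s_1-s_2$ satisfies a homogeneous linear elliptic equation with the same Robin boundary condition, and the identification of the kernel of the linearized operator forces $u$ to be a horizontal translation.
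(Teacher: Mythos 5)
Your plan coincides with the paper's overall architecture: write the problem as $\det(\bar\nabla^2 s+s\bar g)=\phi$ with the Robin condition $\bar\nabla_\mu s=\cot\theta\,s$, run the method of continuity, establish $C^0\to C^1\to C^2$ a priori bounds (with Evans-Krylov/Schauder for higher regularity), use the Fredholm alternative for openness modulo the kernel spanned by horizontal translations, and deduce uniqueness modulo horizontal translation. Two points where the paper's implementation differs concretely: for the $C^2$ estimate it applies the maximum principle directly to $\sigma_1(\tau)$ rather than to an exponential-weighted $\log\lambda_{\max}+As$, and the boundary case hinges on the algebraic identity $\bar\nabla_\mu\tau_{\alpha\beta}=(\tau_{\mu\mu}\bar g_{\alpha\beta}-\tau_{\alpha\beta})\cot\theta$ together with a lower bound on $F^{\mu\mu}$, an argument that requires $\theta<\pi/2$ strictly (the paper states its alternative proof only covers $\theta\in(0,\pi/2)$ and cites [MWW23] for $\theta=\pi/2$); and for uniqueness the paper avoids the convex-combination linearization and instead uses the log-concavity of the capillary mixed-volume sequence (a local Alexandrov-Fenchel inequality) with its equality case. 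Also note the paper's $C^0$ positivity is obtained by a Steiner-point/entropy-minimizer argument (Lemma 4.3) rather than directly from "Minkowski-type integral identities," which is a genuinely distinct and nontrivial ingredient you gloss over. Nonetheless, these are variations of the same Monge-Ampère continuity method and your plan is not a genuinely different route.
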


Let $\bar{g}, \bar \nabla$ denote the standard metric and Levi-Civita connection on $\cC_{\theta}$ and put  $\tau[s] = \bar\n^2 s + s\bar g$.
The regular capillary Christoffel-Minkowski problem in the halfspace asks the following question.

\begin{question}Let $1\leq k<n$ and $0<\phi\in C^{\infty}(\cC_{\theta})$. Find the necessary and sufficient assumptions on $\phi$ so that there exists a strictly convex capillary hypersurface $\Sigma \subset \overline{\bbR^{n+1}_{+}}$ with the capillary support function $s$ satisfying
\eq{
\sigma_k(\tau[s],\bar{g}) = \phi \quad \text{on} \ \mathcal{C}_\theta.
}
\end{question}

Our main contributions are captured in the following theorems.

\begin{thm}\label{CM-thm}
Let $\theta \in (0,\frac{\pi}{2})$ and $1 \leq k < n$. Suppose $0 < \phi \in C^{\infty}(\cC_\theta)$ satisfies:
\begin{enumerate}
\item $\phi(-\zeta_1,\ldots, -\zeta_n,\zeta_{n+1}) = \phi(\zeta_1,\ldots,\zeta_n, \zeta_{n+1})$ for all $\zeta \in \cC_{\theta}$,
\item $\bar \nabla^2 \phi^{-\frac{1}{k}} + \bar g \, \phi^{-\frac{1}{k}} \geq 0$, and $\bar\nabla_{\mu} \phi^{-\frac{1}{k}} \leq \cot\theta \, \phi^{-\frac{1}{k}}$.
\end{enumerate}

Then there exists a smooth, strictly convex capillary hypersurface $\Sigma \subset \overline{\bbR^{n+1}_{+}}$ whose $k$-th elementary symmetric function of the principal radii of curvature is given by:
\eq{\label{capillary-CM-problem}
\sigma_k(\tilde{\nu}^{-1}(\zeta)) = \phi(\zeta).
}

Moreover, the capillary support function $s$ of $\Sigma$ satisfies
\eq{
s(-\zeta_1,\ldots, -\zeta_n,\zeta_{n+1}) = s(\zeta_1,\ldots,\zeta_n, \zeta_{n+1}), \quad \forall \zeta \in \cC_{\theta},
}
and this solution is unique within this class.\end{thm}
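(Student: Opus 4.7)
The plan is to adapt the Guan--Ma strategy of \cite{GM03} to the capillary half-space setting by combining the continuity method with a new capillary constant rank theorem, using \autoref{Min P} as an essential input. I rewrite \eqref{capillary-CM-problem} as a fully nonlinear oblique boundary value problem for the capillary support function
\begin{equation}
\sigma_k(\tau[s],\bar g) = \phi \q \text{in } \cC_\theta,\qquad \bar\nabla_\mu s = \cot\theta \cdot s \q \text{on } \partial\cC_\theta,
\end{equation}
where $\mu$ is the outward conormal of $\partial\cC_\theta$ in $\cC_\theta$, and deform along the path
\begin{equation}
\phi_t^{-1/k} := (1-t)\,\binom{n}{k}^{-1/k} + t\,\phi^{-1/k},\qquad t\in[0,1].
\end{equation}
Since this is a convex combination with a constant, hypotheses (1), (2) of \autoref{CM-thm} and the boundary inequality are preserved along the path, and at $t=0$ the equation admits the explicit solution $s_0=\ell$, the capillary support function of $\cC_\theta$. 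Let $I\subset[0,1]$ denote the set of parameters for which a smooth, strictly convex, $\bbZ_2$-symmetric solution exists; the goal is to show that $I$ is nonempty, open, and closed.

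Openness follows from the implicit function theorem: restricted to the $\bbZ_2$-symmetric class, the horizontal-translation kernel responsible for the non-uniqueness in \autoref{Min P} is eliminated, so the linearization is an isomorphism between the appropriate H\"older spaces with the oblique boundary condition. Closedness reduces to uniform $C^{2,\alpha}$ estimates. The $C^0$ bound is obtained by the symmetry, which pins the horizontal position of the hypersurface, together with a barrier comparison against capillary spherical caps; $C^1$ follows from convexity. The $C^2$ bound proceeds via the auxiliary-function and maximum-principle scheme of \cite{MWW23}: interior maxima of the largest principal radius are excluded by the concavity of $\sigma_k^{1/k}$, and boundary maxima are handled through the capillary oblique condition together with hypothesis~(2). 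Evans--Krylov and Schauder theory then upgrade to uniform $C^{k,\alpha}$ bounds for every $k$.

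The main obstacle, and the genuinely new ingredient, is a \emph{capillary constant rank theorem}: if $\tau[s]\geq 0$ on $\cC_\theta$ and $s$ solves $\sigma_k(\tau[s],\bar g)=\phi$ with $\phi$ satisfying hypothesis~(2), then $\tau[s]>0$ everywhere. In the interior, the spherical convexity of $\phi^{-1/k}$ allows one to adapt the Guan--Ma test function built from a weighted sum of the $(r+1)\times (r+1)$ minors of $\tau[s]$, where $r$ is the minimal rank; one derives a differential inequality which, via the strong maximum principle, forces the minimal-rank set to be open. The new part is propagation to the boundary: the inequality $\bar\nabla_\mu \phi^{-1/k}\leq \cot\theta\cdot\phi^{-1/k}$ from hypothesis~(2), combined with the Neumann condition on $s$, must be fed into a capillary Hopf-type lemma to rule out rank collapse on $\partial\cC_\theta$. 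Since $\tau[s_0]=\bar g>0$ at $t=0$ and rank is lower semicontinuous in $t$, this closes the continuity argument and yields strict convexity throughout the homotopy. Pinning down the correct sign conventions and the precise capillary Hopf lemma here is where I expect most of the technical work to lie.

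Uniqueness within the $\bbZ_2$-symmetric class is then standard: applying the strong maximum principle to the linearized equation satisfied by the difference of two solutions, and using that the oblique linearized problem has no nontrivial symmetric kernel, yields $s_1\equiv s_2$.
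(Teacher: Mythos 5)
Your top-level architecture matches the paper: a continuity method restricted to the even ($\bbZ_2$-symmetric) class along a convex-combination path starting from a constant, openness via the implicit function theorem with the translation kernel killed by symmetry, closedness via a-priori estimates, and a capillary constant/full rank theorem to keep $\tau[s]>0$ along the homotopy. You have correctly located the crux — keeping strict convexity across the boundary — but the two proofs diverge at exactly that point, and your uniqueness step has a real gap.

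For the full rank theorem you propose the classical Guan--Ma test function built from $(r+1)\times(r+1)$ minors of $\tau[s]$ where $r$ is the minimal rank, arguing the minimal-rank set is open and then propagating to $\partial\cC_\theta$ by a capillary Hopf lemma. The paper deliberately avoids the minor/rank bookkeeping and instead follows the viscosity approach of \cite{BIS23}: the smallest eigenvalue $\lambda_1$ of $\tau^\sharp$ satisfies, in the viscosity sense, $F^{ij}\bar\nabla^2_{ij}\lambda_1 \le c(\lambda_1 + |\bar\nabla\lambda_1|)$, so the strong maximum principle rules out interior zeros; on $\partial\cC_\theta$, the Robin condition forces the identity $\bar\nabla_\mu\tau_{\alpha\beta} = (\tau_{\mu\mu}\bar g_{\alpha\beta}-\tau_{\alpha\beta})\cot\theta$, which combined with an explicit exponential barrier in an interior ball touching at the boundary point gives a Hopf-type contradiction. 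So your ``capillary Hopf lemma'' intuition is right, but the paper attaches it to $\lambda_1$ rather than to a minor-sum test function, and in particular never needs the induction on rank.

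The uniqueness argument you sketch (``strong maximum principle applied to the linearized equation'') does not close. The linearization $L_{s,k}[\upsilon]$ has a zero-order term $a^{ij}\bar g_{ij}\,\upsilon$ with a sign that blocks a direct maximum principle; in fact the kernel is nontrivial (it contains $\langle\vec a,\zeta\rangle$, $\vec a\in E_{n+1}^\perp$), so a na\"ive MP cannot give $s_1\equiv s_2$ and you need a structural argument to identify that kernel. The paper obtains it from the Aleksandrov--Fenchel inequalities for capillary bodies in \cite{MWWX24}: $\sigma_k(\tau[s_1])=\sigma_k(\tau[s_2])$ forces equality in the log-concavity chain of the mixed volumes $V(\widehat\Sigma_2[k+1-i],\widehat\Sigma_1[i],\widehat{\cC}_\theta,\ldots)$, which pins $s_2 = s_1 + \langle\vec a,\zeta\rangle$; even symmetry then kills $\vec a$. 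Similarly, your $C^0$ and $C^2$ sketches (``barrier against caps'' and ``auxiliary function of \cite{MWW23}'') describe heavier machinery than what the paper actually runs: the $C^0$ upper bound comes from integrating $\sigma_{k-1}$ by parts together with Newton--Maclaurin and a case analysis on where $\hat s$ attains its maximum, and the $\sigma_1$ bound is a short two-case maximum principle (interior: concavity of $\sigma_k^{1/k}$; boundary: the identity $F^{\mu\mu}\bar\nabla_\mu\sigma_1 = \cot\theta\sum_i(F^{\mu\mu}-F^{ii})(\lambda_\mu-\lambda_i)+\bar\nabla_\mu f$ plus a lower bound on $F^{\mu\mu}$), with no auxiliary test function of the MWW type.
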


The following theorem can be considered as the exact analogues of the result of Guan and Ma \cite{GM03} in the capillary setting.

\begin{thm}\label{CM-thm1}
Let $\theta \in (0, \frac{\pi}{2})$ and $1 \leq k < n$. Suppose $0 < \phi \in C^{\infty}(\cC_\theta)$ satisfies:
\begin{enumerate}
\item $\int_{\cC_\theta} \langle \zeta, E_i \rangle \, \phi(\zeta) \, d\sigma = 0$ for all $i = 1, 2, \ldots, n$,
\item $\bar \nabla^2 \phi^{-\frac{1}{k}} + \bar g \, \phi^{-\frac{1}{k}} \geq 0$ and $\bar\nabla_{\mu} \phi^{-\frac{1}{k}} = \cot\theta \, \phi^{-\frac{1}{k}}$.
\end{enumerate}

Then there exists a smooth, strictly convex capillary hypersurface $\Sigma \subset \overline{\bbR^{n+1}_{+}}$ whose $k$-th elementary symmetric function of the principal radii of curvature is given by:
\eq{
\sigma_k(\tilde{\nu}^{-1}(\zeta)) = \phi(\zeta), \quad \forall \zeta \in \cC_\theta.
}

Moreover, $\Sigma$ is unique up to a horizontal translation in $\overline{\mathbb{R}^{n+1}_{+}}$.
\end{thm}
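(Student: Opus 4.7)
The plan is to recast Theorem~\ref{CM-thm1} as the solvability of the fully nonlinear Neumann boundary value problem
\[
\sigma_k(\tau[s], \bar g) = \phi \text{ in } \cC_\theta, \qquad \bar\n_\mu s = \cot\theta\, s \text{ on } \partial \cC_\theta,
\]
for the unknown capillary support function $s$, and to solve it by the continuity method along a path $\phi_t$ interpolating between $\phi_0 \equiv \binom{n}{k}$, which is realised by $\cC_\theta$ itself, and $\phi_1 = \phi$. A natural choice is $\phi_t = \psi_t^{-k}$ with $\psi_t = (1-t)\binom{n}{k}^{-1/k} + t\,\phi^{-1/k}$, which preserves the interior spherical convexity in (2) pointwise along the path, and for which $\psi_t$ satisfies the Neumann \emph{inequality} of Theorem~\ref{CM-thm} throughout $[0,1]$, with equality achieved only at $t=1$ (precisely where condition (2) of Theorem~\ref{CM-thm1} becomes active). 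The integral condition (1) holds at $t=0$ by the antisymmetry of $\zeta_i$ on $\cC_\theta$ for $1\le i\le n$, and at $t=1$ by hypothesis; along the path it can be enforced by a small modification of the path, or equivalently handled by running the continuity argument modulo the horizontal translation group.

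For openness at a strictly convex $s_t$, the linearised operator $Lv = \sigma_k^{ij}(\tau[s_t])(\bar\n^2_{ij} v + \bar g_{ij} v)$ equipped with the capillary boundary condition $\bar\n_\mu v = \cot\theta\, v$ is elliptic and, with respect to a natural weighted measure on $\cC_\theta$, self-adjoint; its kernel is the $n$-dimensional space of infinitesimal horizontal translations, and the Fredholm solvability condition is the linearisation of (1). For closedness I would establish a priori $C^0$--$C^2$ bounds up to $\partial\cC_\theta$: the $C^0$ bound from (1) after fixing the horizontal translation, the $C^1$ bound from convexity, and the $C^2$ bound from the capillary structure of the Neumann condition together with standard $\sigma_k$-type estimates, adapting the techniques developed in \cite{MWW23} for the capillary Minkowski problem. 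Uniqueness up to horizontal translation follows from a standard $\sigma_k^{1/k}$-concavity maximum principle argument applied to the difference of two candidate support functions, which is then forced to be an infinitesimal horizontal translation.

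The central issue — and the main obstacle of the proof — is to preserve strict convexity along the continuity path, i.e.\ to establish a constant rank theorem for $\tau[s_t]$ in the capillary setting. In the interior the Guan--Ma argument extends: under the assumption $\bar\n^2 \phi^{-1/k} + \bar g\,\phi^{-1/k} \geq 0$, auxiliary combinations such as $\sigma_{\ell+1}(\tau[s])\sigma_{\ell-1}(\tau[s])/\sigma_\ell(\tau[s])^2$ satisfy a differential inequality amenable to the strong maximum principle, forcing constant rank in the interior. The genuinely new content is the boundary analysis: to rule out a rank drop at $\partial\cC_\theta$ one must show that these auxiliary quantities satisfy a matching capillary Neumann inequality, and it is precisely at this step that the equality $\bar\n_\mu \phi^{-1/k} = \cot\theta\,\phi^{-1/k}$ of (2) becomes indispensable — the inequality version used in Theorem~\ref{CM-thm} would be insufficient — after which Hopf's boundary lemma closes the argument. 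Setting up and proving this capillary boundary constant rank theorem is where the bulk of the new work of the proof is concentrated.
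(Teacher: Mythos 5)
Your outline shares the continuity-method skeleton with the paper, but your chosen homotopy path fails at exactly the step that makes \autoref{CM-thm1} hard. Interpolating $\psi_t = (1-t)\binom{n}{k}^{-1/k} + t\phi^{-1/k}$ does preserve the convexity condition and the Robin \emph{inequality}, but for $0<t<1$ and $\theta\in(0,\pi/2)$ it is \emph{not} a capillary function: one computes
\begin{equation}
\bar\nabla_\mu\psi_t - \cot\theta\,\psi_t = -(1-t)\binom{n}{k}^{-1/k}\cot\theta < 0
\end{equation}
strictly. By \autoref{cap gauss param}, capillary support functions must satisfy the Robin \emph{equality}, so $\psi_t$ does not correspond to any capillary hypersurface, \autoref{constr hyper from s} does not produce one, and the Steiner-point mechanism that enforces the integral condition (1) along the path (\autoref{steiner}, \autoref{steiner2}) does not go through for your interpolant. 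The paper's path $(1-t)\ell + t\phi^{-1/k} - \langle\zeta,z_t\rangle$ works precisely because $\ell$, $\phi^{-1/k}$ and $\langle\zeta,z_t\rangle$ all satisfy the Robin equality, so each $\psi_t$ remains a genuine capillary support function; this is what allows the construction of $z_t$ via the Guan--Ni entropy argument, the proof of its $t$-continuity, the identities $z_0=z_1=0$, and the uniform bounds on $\phi_t$ (\autoref{homotopy path}). Indeed, the paper first proves the symmetric \autoref{CM-thm} in order to manufacture the $t=0$ endpoint for this homotopy (the unique even solution with $\sigma_k=\ell^{-k}$), rather than starting from $\cC_\theta$. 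Your phrase ``enforced by a small modification'' or ``modulo horizontal translations'' papers over this, which the authors explicitly flag as the main new difficulty of \autoref{CM-thm1}.

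You also misplace where the equality in hypothesis (2) is used. The Full Rank \autoref{thm: frt} only needs the \emph{inequality} $\bar\nabla_\mu\log\phi\geq -k\cot\theta$ (equivalently $\bar\nabla_\mu\phi^{-1/k}\leq\cot\theta\,\phi^{-1/k}$), and the same theorem is invoked unchanged in both \autoref{CM-thm} and \autoref{CM-thm1}; the equality is needed for the homotopy path to consist of capillary functions, not for the rank argument. Moreover, the paper proves full rank via a viscosity argument on the smallest eigenvalue $\lambda_1$ of $\tau^\sharp[s]$ following \cite{BIS23}, supplemented by a boundary Hopf-type barrier, rather than the Guan--Ma inductive quotient argument you sketch; and uniqueness is established from the log-concavity of mixed volumes in \cite{MWWX24}, not from a pointwise maximum principle — for intermediate $k<n$ the latter is not a standard route.
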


To prove \autoref{CM-thm1}, a main challenge is the construction of a suitable homotopy path. An interesting corollary of \autoref{CM-thm} is that there is a unique strictly convex capillary hypersurface whose $\sigma_k$ of radii of curvature is $\ell^{-k}$. This allows us to construct a homotopy path (cf. \autoref{homotopy path}) by suitably perturbing a linear interpolation between $\phi^{-1/k}$ and $\ell$, while ensuring that the path remains within an admissible class of functions satisfying (1) and (2). The other challenges, such as establishing the $C^2$ estimates, are overcome through surprisingly short arguments that rely on the specific structure of the problem and may offer a streamlined approach in related capillary curvature problems. Moreover, to prove that the solution is strictly convex, we follow the approach in \cite{BIS23}, which is a different route compared to the inductive procedure used in the work of Guan and Ma \cite{GM03}. We show that the capillary support function is uniformly bounded away from zero along the homotopy path, and that the smallest principal radii of curvature satisfies a viscosity inequality. In particular, it cannot vanish identically, since it is strictly positive at the point where the capillary support function attains its minimum.

We mention that our arguments here also prove the result of \cite{MWW23} via completely different arguments for obtaining $C^0$, $C^1$, $C^2$ estimates when $\phi \in C^{\infty}(\cC_{\theta})$ and $\theta \in (0,\pi/2)$.

\section{Capillary hypersurfaces in the halfspace}\label{sec:capillary}
In this section, we recall some basic facts about strictly convex capillary hypersurfaces $\Sigma\sub\overline{\mathbb{R}^{n+1}_+}$. 

\begin{lemma} \cite{MWWX24}\label{lem:nudiffeo}
The Gauss map $\nu: \Sigma \to \mathbb{S}^n$ has its image in
\eq{
\mathbb{S}^n_\theta := \{y \in \mathbb{S}^n \mid y_{n+1} \geq \cos \theta \},
}
and $\nu: \Sigma \to \mathbb{S}^n_\theta$ is a diffeomorphism.
\end{lemma}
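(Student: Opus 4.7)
The plan is to split the lemma into (a) the image inclusion $\nu(\Si)\sub \bbS^n_\theta$ and (b) the diffeomorphism claim, handling each in turn.

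For (a), I would study the function $h:=\ip{\nu}{E_{n+1}}$ on $\Si$, which is smooth and equals $\cos\theta$ on $\del\Si$. Assume towards a contradiction that the minimum of $h$ on $\Si$ is attained at an interior point $p_0$. Then $0=\bar\nabla_i h = -h_i{}^{j}\ip{e_j}{E_{n+1}}$, and strict convexity makes the shape operator invertible, so $\ip{e_j}{E_{n+1}}=0$ for every tangent direction at $p_0$. Hence $E_{n+1}$ is normal to $\Si$ at $p_0$, and $\nu(p_0)=\pm E_{n+1}$. The downward case $\nu(p_0)=-E_{n+1}$ is excluded by a global argument: by convexity of $\widehat\Si$, the half-space $\{x_{n+1}\geq x_{n+1}(p_0)\}$ must contain $\widehat\Si$, while $\widehat\Si$ meets the hyperplane $\del\bbR^{n+1}_+=\{x_{n+1}=0\}$, forcing $x_{n+1}(p_0)\leq 0$, which contradicts $p_0\in\operatorname{int}(\Si)\sub\bbR^{n+1}_+$. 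The upward case $\nu(p_0)=E_{n+1}$ gives $h(p_0)=1$, so by minimality $h\equiv 1$ on $\Si$, contradicting $h=\cos\theta<1$ on $\del\Si$. Thus $\min_\Si h$ is attained on $\del\Si$ and $h\geq \cos\theta$ everywhere.

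For (b), strict convexity of $\widehat\Si$ implies that each boundary point of $\widehat\Si$ admits a unique outward supporting hyperplane and that distinct points carry distinct unit outer normals, so $\nu$ is injective. Since $-d\nu$ equals the second fundamental form, which is positive-definite, $\nu$ is a local diffeomorphism on $\operatorname{int}(\Si)$; a boundary version of the inverse function theorem, applicable because the capillary condition sends $\del\Si$ into $\del\bbS^n_\theta$, extends this to a neighborhood of $\del\Si$. A compact, injective, local diffeomorphism between manifolds with boundary is a diffeomorphism onto its image, and since $\nu(\Si)$ is both closed (compactness) and open in $\bbS^n_\theta$, connectedness of $\bbS^n_\theta$ yields $\nu(\Si)=\bbS^n_\theta$.

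The delicate point will be the exclusion of the case $\nu(p_0)=-E_{n+1}$ in step (a): unlike the standard closed-hypersurface situation, this cannot be ruled out by an infinitesimal argument alone and requires combining the convexity of $\widehat\Si$ with the global constraint that $\widehat\Si$ sits on top of $\del\bbR^{n+1}_+$. Once that is in place, the remaining steps follow from routine convex-body geometry and the inverse function theorem.
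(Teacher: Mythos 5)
The paper does not prove this lemma itself — it is taken from \cite{MWWX24} — so there is no in-paper proof to compare against; I will evaluate your argument on its own terms. Your part (a) is sound: at an interior critical point of $h=\ip{\nu}{E_{n+1}}$ the invertibility of the shape operator forces $E_{n+1}$ to lie in the normal line, the downward case $\nu(p_0)=-E_{n+1}$ is correctly excluded by the supporting-hyperplane/half-space argument, and the upward case is excluded because $h\le 1$ with equality forcing $h\equiv 1$.

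Part (b), however, contains a genuine gap in the injectivity step. You assert that ``strict convexity of $\widehat\Sigma$ implies that each boundary point of $\widehat\Sigma$ admits a unique outward supporting hyperplane and that distinct points carry distinct unit outer normals.'' This is false for $\widehat\Sigma$: the body $\widehat\Sigma$ has a flat face, namely the base $\widehat\Sigma\cap E_{n+1}^\perp$, on which every relatively interior point shares the outer normal $-E_{n+1}$, and the points of $\partial\Sigma$ are wedge points with an entire cone of supporting hyperplanes. What is true, and what you actually need, is that distinct points of $\Sigma$ itself carry distinct normals, and establishing this requires using part (a) together with the positive-definiteness of the second fundamental form on $\Sigma$: if $p\ne q\in\Sigma$ shared a normal $\nu_0$, the segment $[p,q]$ would lie in the corresponding supporting hyperplane $H$; since $\ip{\nu_0}{E_{n+1}}\ge\cos\theta>0$ by part (a), $H$ cannot support $\widehat\Sigma$ at a relatively interior point of the base (where the only supporting normal is $-E_{n+1}$), so $[p,q]\subset\Sigma$, contradicting II $>0$. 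A second, smaller omission is that to invoke the boundary inverse function theorem and openness of $\nu$ you need $h>\cos\theta$ on $\operatorname{int}(\Sigma)$, not merely $h\ge\cos\theta$; this does follow from the same interior-critical-point analysis as part (a), but you should say so explicitly.
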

A simple consequence of this lemma is that the hypersurface $\Sigma$ can be parametrized via the inverse capillary Gauss map:
\eq{
X&: \mathcal{C}_\theta \to \Sigma\\
X(\zeta) &= \tilde{\nu}^{-1}(\zeta) = \nu^{-1}(\zeta + \cos\theta E_{n+1}).
}

The capillary support function of $\Sigma$, $s=s_{\Sigma}: \cC_{\theta}\to \bbR$, is defined as
\eq{
s(\zeta) := \langle X(\zeta), \nu(X(\zeta)) \rangle = \langle \tilde{\nu}^{-1}(\zeta), \zeta + \cos\theta E_{n+1} \rangle,\q \zeta \in \cC_{\theta}.
}
Note that the word ``capillary" here emphasizes that the domain of $s$ is not $\bbS^n_{\theta}$ but $ \mathcal{C}_{\theta}$. Often, it is more convenient to work with $\bbS_{\theta}^n$ as the domain of $s$. To avoid confusion, we write $\hat{s}=\hat{s}_{\widehat{\Sigma}}: \bbS^n\to \bbR$ for the (standard) support function of the convex body $\widehat{\Sigma}$, which is defined as
\eq{
\hat{s}(u):=\max_{x\in \widehat{\Sigma}} \langle u,x\rangle,\q \forall u\in \bbS^n.
}

Let $(D, \delta)$ and $(\bar{\nabla}, \bar{g})$ be the standard connection and metric of Euclidean space $\mathbb{R}^{n+1}$ and $\mathbb{S}^n$, respectively. 

\begin{lemma}\label{cap gauss param} \cite[Lem. 2.4]{MWWX24}
    Assume that $\Sigma$ is a strictly convex capillary hypersurface in $\overline{\mathbb{R}^{n+1}_+}$. Then for the parametrization $X: \mathcal{C}_\theta \to \Sigma$, we have the following formulas:
    \begin{enumerate}
        \item $X(\zeta) = \bar{\nabla} s(\zeta) + s(\zeta) (\zeta+\cos\theta E_{n+1})$.
        \item $\bar{\nabla}_\mu s = \cot \theta s$ along $\partial \mathcal{C}_\theta$, where $\mu$ is the outward-pointing unit conormal to $\partial \mathcal{C}_\theta \subset \mathcal{C}_\theta$.
        \item The principal radii of curvature of $\Sigma$ at $X(\zeta)$ are the eigenvalues of $\tau[s]=\bar{\nabla}^2 s + s \bar{g}$ with respect to the round metric $\bar{g}$.
    \end{enumerate}
\end{lemma}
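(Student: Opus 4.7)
The plan is to establish the three formulas in order, since (1) is the foundational identity from which (2) and (3) follow with short additional arguments. For (1), I would differentiate the definition $s(\zeta)=\langle X(\zeta),\zeta+\cos\theta E_{n+1}\rangle$ along $\cC_{\theta}$. For any $v\in T_{\zeta}\cC_{\theta}$ the product rule gives
\eq{\bar{\nabla}_{v}s=\langle dX(v),\zeta+\cos\theta E_{n+1}\rangle+\langle X(\zeta),v\rangle,}
and the first summand vanishes because $dX(v)\in T_{X(\zeta)}\Sigma$ is orthogonal to $\nu(X(\zeta))=\zeta+\cos\theta E_{n+1}$. Writing $n(\zeta):=\zeta+\cos\theta E_{n+1}$ for the outer unit normal of $\cC_{\theta}$ in $\bbR^{n+1}$ and decomposing $X(\zeta)$ into its $\cC_{\theta}$-tangential and $\cC_{\theta}$-normal components, the tangential piece is characterized by the relation $\bar{\nabla}_{v}s=\langle X(\zeta),v\rangle$, hence equals $\bar{\nabla}s$, while the normal piece has coefficient $\langle X,n\rangle=s$, yielding (1).

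For (2), I would exploit that $\tilde{\nu}$ carries $\partial\Sigma$ to $\partial\cC_{\theta}$ (which can be read off from \autoref{lem:nudiffeo}), so $X(\zeta)\in\partial\Sigma\subset\partial\bbR^{n+1}_{+}$ whenever $\zeta\in\partial\cC_{\theta}$; in particular $\langle X(\zeta),E_{n+1}\rangle=0$. At such a boundary point one has $\zeta_{n+1}=0$ and, from $|\zeta+\cos\theta E_{n+1}|=1$, the identity $|\zeta|^{2}=\sin^{2}\theta$. Projecting $-E_{n+1}$ onto $T_{\zeta}\cC_{\theta}$ and normalizing (the resulting vector has length $\sin\theta$) produces the explicit outward conormal $\mu=\cot\theta\,\zeta-\sin\theta\, E_{n+1}$. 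Substituting $v=\mu$ into $\bar{\nabla}_{v}s=\langle X(\zeta),v\rangle$ and using $\langle X,E_{n+1}\rangle=0$ at $\partial\cC_{\theta}$ (which further gives $s=\langle X,\zeta\rangle$ there) yields $\bar{\nabla}_{\mu}s=\cot\theta\,\langle X,\zeta\rangle=\cot\theta\, s$.

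For (3), I would differentiate (1) with the ambient connection $D$ on $\bbR^{n+1}$ together with the Gauss equation for the unit sphere $\cC_{\theta}\subset\bbR^{n+1}$, namely $D_{v}W=\bar{\nabla}_{v}W-\langle v,W\rangle n$ for tangent $W$, and $D_{v}n=v$. The two normal contributions cancel, producing
\eq{dX(v)=D_{v}(\bar{\nabla}s)+(\bar{\nabla}_{v}s)n+s\,D_{v}n=\bar{\nabla}_{v}\bar{\nabla}s+sv,}
which is exactly $\tau[s]$ acting on $v$ as a $(1,1)$-tensor via $\bar{g}$. On the other hand, differentiating the identity $\tilde{\nu}\circ X=\mathrm{id}_{\cC_{\theta}}$ and using $d\tilde{\nu}=d\nu$ identifies the Weingarten map of $\Sigma$ as the inverse of $dX$, under the natural identification of $T_{X(\zeta)}\Sigma$ and $T_{\zeta}\cC_{\theta}$ (both being the hyperplane in $\bbR^{n+1}$ perpendicular to $n(\zeta)$). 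Consequently, the principal radii of curvature, being the eigenvalues of the inverse Weingarten map, coincide with those of $\tau[s]$ taken with respect to $\bar{g}$.

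The computations are variants of the classical support-function identities, and the only genuinely capillary-specific step is the proof of (2): producing the explicit conormal $\mu$ and then combining it with the boundary vanishing $\langle X,E_{n+1}\rangle=0$ on $\partial\Sigma$ so that the $E_{n+1}$-contribution is absorbed cleanly. This is precisely where the capillary constraint $\langle\nu,E_{n+1}\rangle=\cos\theta$ feeds into the support function and produces the Robin-type boundary relation $\bar{\nabla}_{\mu}s=\cot\theta\, s$; everything else reduces to careful bookkeeping of tangent and normal components along $\cC_{\theta}$.
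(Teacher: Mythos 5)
The paper does not prove this lemma; it cites it verbatim from [MWWX24, Lem.\ 2.4], so there is no internal proof to compare against. On its own terms your proof is correct and complete; the chain you run is the standard one. For (1): differentiating $s(\zeta)=\langle X(\zeta),\zeta+\cos\theta E_{n+1}\rangle$, killing $\langle dX(v),\nu\rangle=0$, and reading off the tangential/normal split of $X$ with respect to the unit normal $n(\zeta)=\zeta+\cos\theta E_{n+1}$ of the (translated) sphere $\cC_\theta$. For (2) your explicit formula $\mu=\cot\theta\,\zeta-\sin\theta\,E_{n+1}$ is the correct outward unit conormal at $\partial\cC_\theta$: it is orthogonal to $n(\zeta)$, of unit length since $|\zeta|^2=\sin^2\theta$ there, normal to $\partial\cC_\theta$, and points in the $-E_{n+1}$ direction; pairing it with $\bar\nabla_v s=\langle X,v\rangle$ together with $\langle X,E_{n+1}\rangle=0$ on $\partial\Sigma$ and the boundary identity $s=\langle X,\zeta\rangle$ gives the Robin condition. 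For (3) differentiating (1) with the ambient connection using $D_vW=\bar\nabla_vW-\langle v,W\rangle n$ and $D_vn=v$ yields $dX(v)=\tau^{\sharp}[s](v)$, while $\tilde\nu\circ X=\mathrm{id}$ shows $dX$ is the inverse Weingarten map of $\Sigma$; the eigenvalue identification follows. One cosmetic point worth flagging: when you write ``$d\tilde\nu=d\nu$ identifies the Weingarten map of $\Sigma$ as the inverse of $dX$'', you are implicitly using that $T_{X(\zeta)}\Sigma$, $T_{\nu(X(\zeta))}\bbS^n$, and $T_\zeta\cC_\theta$ are all the same hyperplane $n(\zeta)^\perp$ in $\bbR^{n+1}$, so the differential of the Gauss map coincides with the shape operator under this canonical identification; making that explicit would close the one small logical gap a careful reader would notice.
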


\begin{defn}
For $\theta \in (0, \pi)$, $f \in C^2(\mathcal{C}_\theta)$ is called a {\em capillary function} if it satisfies the following Robin-type boundary condition:
\eq{
\bar{\nabla}_\mu f = \cot \theta f \quad \text{along} \ \partial \mathcal{C}_\theta.
}
\end{defn}

In view of \autoref{cap gauss param}, $\sigma_n(\tau^{\sharp}[s])$ is the reciprocal of the Gauss curvature of $\Sigma$ at the point $X(\zeta)$ on $\Sigma$. Moreover $\zeta^i=\langle \zeta,E_i\rangle$ is a capillary function and from the divergence-free structure of $\sigma_k$, $\tau[\zeta^i] = 0$ for $i = 1, \ldots, n$ and integration by parts, it follows that
\eq{
\int_{\mathcal{C}_\theta} \sigma_k(\tau^{\sharp}[s](\zeta)) \zeta^i\, d\si= \frac 1k\int_{\cC_\theta}\sigma_k^{pq}\tau[\zeta^i]_{pq}s\,d\si=  0, \quad \text{for} \ i = 1, \ldots, n.
}

For a capillary function, the following crucial fact is known:

\begin{lemma}\cite[Lem. 2.7]{MWWX24} \label{key-lemma-capillary-function}
    If $f$ is a capillary function on $\mathcal{C}_\theta$, then for all $2 \leq \alpha \leq n$, 
    \eq{
    \bar{\nabla}^2 f(e_\alpha, \mu) = 0 \quad \text{along} \ \partial \mathcal{C}_\theta,
    }
    where $\{e_\alpha\}_{\alpha = 2, \ldots, n}$ is a local orthonormal frame on $\partial \mathcal{C}_\theta$.
\end{lemma}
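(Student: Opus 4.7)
The plan is to differentiate the Robin-type boundary condition $\bar\nabla_\mu f = \cot\theta \cdot f$ tangentially along $\partial\cC_\theta$, and then rewrite the result using the basic Hessian identity
\[
\bar\nabla^2 f(X,Y) = X(Yf) - (\bar\nabla_X Y)(f),
\]
together with the fact that $\partial\cC_\theta$ is a totally umbilical (geodesic) sphere inside the round sphere $\cC_\theta$.

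Since the frame vector $e_\alpha$ is tangent to $\partial\cC_\theta$, I can differentiate the identity $\mu(f) = \cot\theta\cdot f$ (valid along $\partial\cC_\theta$) in the direction of $e_\alpha$ to get $e_\alpha(\mu(f)) = \cot\theta \cdot e_\alpha(f)$. Next, I compute $\bar\nabla_{e_\alpha}\mu$. Intrinsically, $(\cC_\theta,\bar g)$ is a piece of the unit round sphere, and $\partial\cC_\theta$ is the geodesic sphere of radius $\theta$ centered at the apex $(1-\cos\theta)E_{n+1}$ of the cap; indeed, viewing $\cC_\theta$ as the unit sphere about $-\cos\theta E_{n+1}$, the apex corresponds to the direction $E_{n+1}$ from the center, while any boundary point corresponds to a direction making angle $\theta$ with $E_{n+1}$. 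Using geodesic polar coordinates about the apex with metric $dr^2 + \sin^2(r)\, g_{\bbS^{n-1}}$ and $\mu = \partial_r$, a direct Christoffel computation yields $\bar\nabla_{e_\alpha}\mu = \cot\theta \cdot e_\alpha$. Substituting both identities into the Hessian formula with $X=e_\alpha$, $Y=\mu$, the two contributions cancel:
\[
\bar\nabla^2 f(e_\alpha,\mu) = e_\alpha(\mu(f)) - (\bar\nabla_{e_\alpha}\mu)(f) = \cot\theta\cdot e_\alpha(f) - \cot\theta\cdot e_\alpha(f) = 0.
\]

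There is essentially no obstacle here: the only geometric input is the textbook fact that a geodesic sphere of radius $r$ in a round unit sphere is totally umbilical with principal curvatures $\cot(r)$. The role of the Robin coefficient $\cot\theta$ in the definition of a capillary function is precisely tuned to this second fundamental form, which is exactly why the cancellation takes place. The same mechanism, incidentally, explains why capillary support functions arise so naturally in this problem.
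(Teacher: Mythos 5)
Your proof is correct. The paper itself does not prove this lemma (it is cited as \cite[Lem.\ 2.7]{MWWX24}), but your argument — differentiating the Robin condition $\bar\nabla_\mu f=\cot\theta\,f$ tangentially and using that $\partial\cC_\theta$ is a geodesic sphere of intrinsic radius $\theta$ in the round $\cC_\theta$, so $\bar\nabla_{e_\alpha}\mu=\cot\theta\,e_\alpha$ — is the standard one and is consistent with the boundary Christoffel identities ($\bar\nabla_{e_\beta}\mu=\cot\theta\,e_\beta$, $\bar\nabla_{e_\beta}e_\alpha=-\cot\theta\,\bar g_{\alpha\beta}\mu+\cdots$) that the paper itself invokes at the start of the Full Rank Theorem proof.
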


Another crucial theorem that we need is the following statement.

\begin{thm}\label{constr hyper from s}Let $\theta \in (0, \frac{\pi}{2})$ and $s \in C^{\infty}(\cC_{\theta})$ be a capillary function with $\tau[s] > 0$. Then the image $D\hat{s}(\bbS^n_{\theta})$ is a smooth, strictly convex capillary hypersurface, where
\eq{
\hat{s}(\cdot) = s(\cdot - \cos \theta \, E_{n+1}).
}
\end{thm}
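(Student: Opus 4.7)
The plan is to work on the spherical cap $\bbS^n_{\theta}$ via the isometry $\zeta\mapsto u:=\zeta+\cos\theta E_{n+1}$, which sends $\cC_{\theta}$ onto $\bbS^n_{\theta}$. Then $\hat s\in C^{\infty}(\bbS^n_{\theta})$ inherits both $\tau[\hat s]:=\bar\nabla^2\hat s+\hat s\bar g>0$ and the Robin condition $\bar\nabla_{\mu}\hat s=\cot\theta\,\hat s$ on $\partial\bbS^n_{\theta}$, where $\mu=(\cos\theta\,u-E_{n+1})/\sin\theta$ is the outward unit conormal. The candidate hypersurface is $X:\bbS^n_{\theta}\to\bbR^{n+1}$ defined by $X(u):=\bar\nabla\hat s(u)+\hat s(u)\,u$, and the task is to verify that $X$ is an embedded, strictly convex capillary hypersurface.

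First I would verify that $X$ is a smooth immersion with outward unit normal $u$ and positive principal radii of curvature. For $V\in T_u\bbS^n$, a short computation using $D_Vu=V$ and the Gauss formula $D_V(\bar\nabla\hat s)=\bar\nabla_V\bar\nabla\hat s-\langle V,\bar\nabla\hat s\rangle u$ on $\bbS^n\subset\bbR^{n+1}$ gives $dX(V)=\bar\nabla_V\bar\nabla\hat s+\hat s\,V=\tau[\hat s]^{\sharp}(V)$, an isomorphism by hypothesis, and $\langle dX(V),u\rangle=0$. Hence $u$ is the unit normal to the image at $X(u)$, the Gauss map is the identity on $\bbS^n_{\theta}$, and the principal radii at $X(u)$ are the (positive) eigenvalues of $\tau[\hat s]$, so the image is a smooth locally strictly convex piece of hypersurface.

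For the capillary boundary condition, at $u\in\partial\bbS^n_{\theta}$ one has $u_{n+1}=\cos\theta$ and $\langle u,\bar\nabla\hat s\rangle=0$; combining this with the Robin condition gives $\langle E_{n+1},\bar\nabla\hat s\rangle=-\cos\theta\,\hat s$, so $\langle X(u),E_{n+1}\rangle=-\cos\theta\,\hat s+\hat s\cos\theta=0$. Thus $X$ sends $\partial\bbS^n_{\theta}$ into $\partial\bbR^{n+1}_+$, and since $\langle u,E_{n+1}\rangle=\cos\theta$ there, the outward normal makes the prescribed capillary angle with $E_{n+1}$.

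The main obstacle is global convexity and embeddedness, which cannot be deduced pointwise. I would argue by a spherical Jacobi-type comparison. Fix $u_0\in\bbS^n_{\theta}$ and set $f(v):=\hat s(v)-\langle X(u_0),v\rangle$ on $\bbS^n_{\theta}$. Since $\tau$ annihilates restrictions of linear functions to $\bbS^n$, $\tau[f]=\tau[\hat s]>0$; moreover, the definition of $X(u_0)$ gives $f(u_0)=0$ and $\bar\nabla f(u_0)=0$ (the same identity holds whether $u_0$ is interior or on $\partial\bbS^n_{\theta}$). Along any unit-speed geodesic $\gamma$ in $\bbS^n_{\theta}$ emanating from $u_0$, $g(t):=f(\gamma(t))$ satisfies $g''+g=\tau[f](\gamma',\gamma')>0$ with $g(0)=g'(0)=0$; variation of parameters then yields $g(t)=\int_0^t\sin(t-s)(g''(s)+g(s))\,ds>0$ for $t\in(0,\pi)$. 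Because $\theta\in(0,\pi/2)$, $\bbS^n_{\theta}$ is geodesically convex with diameter $\pi-2\theta<\pi$, so $f>0$ on $\bbS^n_{\theta}\setminus\{u_0\}$. This simultaneously forces $X$ to be injective and identifies $X(u_0)$ as the unique support point in direction $u_0$ of the convex body $\widehat\Sigma:=\{x\in\overline{\bbR^{n+1}_+}:\langle x,u\rangle\leq\hat s(u)\text{ for all }u\in\bbS^n_{\theta}\}$, which is bounded since the constraints at $u=E_{n+1}$ and along $\partial\bbS^n_{\theta}$ together control all coordinates. Therefore $X(\bbS^n_{\theta})$ is precisely the upper boundary of $\widehat\Sigma$, i.e., a smooth, strictly convex, embedded capillary hypersurface.
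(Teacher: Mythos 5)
Your argument takes a genuinely different route from the paper's proof. In the paper, the pointwise facts (that $D\hat{s}$ is an immersion with positive principal radii, that $D\hat{s}(\operatorname{int}\bbS^n_\theta)\subset\bbR^{n+1}_+$ and $D\hat{s}(\partial\bbS^n_\theta)\subset\partial\bbR^{n+1}_+$, and the contact angle) are cited from \cite{MWWX24}; the global step then restricts $\hat{s}$ to a support function $h$ on $\bbS^{n-1}$, shows $D\hat{s}(\partial\bbS^n_\theta)=Dh(\bbS^{n-1})$ is an embedded strictly convex hypersurface of $\partial\bbR^{n+1}_+$, and invokes Ghomi's theorem \cite[Thm.~3.5]{Gho02} to conclude that $\Sigma$ bounds a convex body. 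Your spherical Jacobi comparison, using that $f(v)=\hat{s}(v)-\langle X(u_0),v\rangle$ satisfies $\tau[f]=\tau[\hat{s}]>0$ and vanishes together with its gradient at $u_0$, hence is strictly positive away from $u_0$ along the (length $<\pi$) geodesics of the geodesically convex cap, is a self-contained replacement for that global step: it avoids Ghomi's theorem entirely while simultaneously giving injectivity and the support-point identification $X(u_0)=\arg\max_{\widehat\Sigma}\langle\cdot,u_0\rangle$. Your computations $dX(V)=\tau^\sharp V$ and $\langle X,E_{n+1}\rangle=0$ on $\partial\bbS^n_\theta$ are correct.

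There is, however, a gap in the final step. Declaring $X(u_0)$ to be ``the unique support point in direction $u_0$ of the convex body $\widehat\Sigma$'' presupposes $X(u_0)\in\widehat\Sigma$, in particular $\psi(u_0):=\langle X(u_0),E_{n+1}\rangle\ge 0$; your boundary computation gives $\psi\equiv 0$ on $\partial\bbS^n_\theta$ but says nothing about interior points, and if $\psi<0$ somewhere then $X(u_0)\notin\overline{\bbR^{n+1}_+}$ and the identification of $X(\bbS^n_\theta)$ with the upper boundary of $\widehat\Sigma$ collapses. This is precisely the fact the paper imports from \cite{MWWX24}. It can be supplied inside your framework: since $\bar\nabla\psi=\tau^\sharp(\bar\nabla y)$ with $y=\langle u,E_{n+1}\rangle$ and $\tau^\sharp$ is invertible, the only interior critical point of $\psi$ is $u=E_{n+1}$; applying your inequality $f>0$ at a boundary point $u_0\in\partial\bbS^n_\theta$ with $v=E_{n+1}$ gives $\psi(E_{n+1})=\hat{s}(E_{n+1})>\langle X(u_0),E_{n+1}\rangle=0$; since $\psi\equiv 0$ on $\partial\bbS^n_\theta$, the minimum of $\psi$ equals $0$ and can only be attained on the boundary, so $\psi>0$ in the interior. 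Also a minor slip: $\bbS^n_\theta$ is the geodesic ball of radius $\theta$ about $E_{n+1}$, so its geodesic diameter is $2\theta$, not $\pi-2\theta$; both are $<\pi$ for $\theta\in(0,\pi/2)$, so your conclusion is unaffected.
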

\begin{proof} 
Let $\Sigma = D\hat{s}(\bbS^n_\theta)$. It can be verified that $D\hat{s}: \bbS^n_\theta \to \bbR^{n+1}$ defines an immersion with positive Gauss curvature. Moreover, $D\hat{s}(\operatorname{int}(\bbS^n_\theta)) \subset \bbR^{n+1}_+$, $D\hat{s}(\partial \bbS^n_\theta) \subset \partial \bbR^{n+1}_+$, and $\Sigma$ intersects $\partial \bbR^{n+1}_+$ at the constant angle $\theta$; see \cite{MWWX24}.

For $u\in \bbS^n\cap E_{n+1}^{\bot}\sim \bbS^{n-1}$ define 
\eq{
h(u)=\frac{\hat{s}(\sin \theta u+\cos\theta E_{n+1})}{\sin \theta}.
} 
Note that $h$ is the support function of a closed, strictly convex hypersurface $\cH$ in $\partial \bbR^{n+1}_+$.
Since $\cH=Dh(\bbS^{n-1})=D\hat{s}(\partial \bbS^n_{\theta})=\partial \Sigma$, we see that $\partial \Sigma$ is embedded. Now from \cite[Thm. 3.5]{Gho02} it follows that $\Sigma$ bounds a convex body and thus it is a capillary hypersurface.
\end{proof}

\section{A Full rank theorem}

\begin{thm}[Full Rank Theorem]\label{thm: frt}
Let $\theta \in (0, \pi/2]$ and $1 \leq k < n$. Assume that $\bar \nabla^2 \phi^{-\frac{1}{k}} + \bar g \phi^{-\frac{1}{k}} \geq 0$, and that $\bar \nabla_{\mu} \log \phi \geq -k \cot \theta$. If $s$ is a capillary function with $\tau[s] \geq 0$ that satisfies $\sigma_k(\tau^{\sharp}[s]) = \phi$, then the smallest eigenvalue $\lambda_1$ of $\tau^{\sharp}$ cannot attain zero unless $\lambda_1 \equiv 0$ on $\cC_{\theta}$. In particular, if $s > 0$, we have $\lambda_1 > 0$ on $\cC_{\theta}$.
\end{thm}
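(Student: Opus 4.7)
The plan is to combine the constant rank methodology for fully nonlinear elliptic equations (as in Bian--Guan and Guan--Ma) with a Hopf-type boundary analysis exploiting the capillary structure, and then to leverage the geometry of the support function to upgrade constant rank to full rank when $s > 0$.

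\textbf{Interior step.} Let $r := \min_{\cC_\theta} \operatorname{rank} \tau^{\sharp}[s]$; since $\sigma_k(\tau^\sharp) = \phi > 0$, one has $r \geq k$. Assume for contradiction that $r < n$ and introduce a Bian--Guan-type auxiliary function $\psi \geq 0$ built from the elementary symmetric polynomials of the smallest $n - r$ eigenvalues of $\tau^\sharp$, normalized so that $\psi$ vanishes precisely on the rank-$r$ locus. Diagonalizing $\tau^\sharp$ in an orthonormal frame, splitting indices into ``good'' (the $r$ large eigenvalues) and ``bad'' (the $n-r$ small eigenvalues) directions, and differentiating $\sigma_k(\tau^\sharp) = \phi$ twice, one combines the concavity of $\sigma_k^{1/k}$ with the hypothesis $\bar\nabla^2 \phi^{-1/k} + \bar g\, \phi^{-1/k} \geq 0$ to obtain a differential inequality of the form
\[
\sigma_k^{ij}(\tau^\sharp)\, \bar\nabla_i \bar\nabla_j \psi \leq C\bigl(\psi + |\bar\nabla \psi|\bigr)
\]
modulo non-negative third-order terms absorbed via Cauchy--Schwarz. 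The strong minimum principle then forces $\psi \equiv 0$ on $\operatorname{int}(\cC_\theta)$ whenever $\psi$ touches zero there, so the rank of $\tau^\sharp$ is constantly equal to $r$ in the interior.

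\textbf{Boundary step.} To rule out a boundary-only minimum of $\psi$, I would exploit \autoref{key-lemma-capillary-function}: since $s$ is a capillary function, $\bar\nabla^2 s(e_\alpha, \mu) = 0$ along $\partial \cC_\theta$ for every tangent direction $e_\alpha$, so $\mu$ is an eigenvector of $\tau^\sharp$ at the boundary and $\tau^\sharp$ decomposes into a tangential $(n-1)\times(n-1)$ block plus the radial entry $\tau^\sharp(\mu, \mu)$. Differentiating $\sigma_k(\tau^\sharp) = \phi$ in the $\mu$-direction, substituting $\bar\nabla_\mu s = \cot\theta\, s$ for radial derivatives of $s$, and invoking the hypothesis $\bar\nabla_\mu \log \phi \geq -k\cot\theta$, one should obtain $\bar\nabla_\mu \psi \geq 0$ at any boundary zero of $\psi$, with equality only when $\psi \equiv 0$. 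Hopf's boundary point lemma then excludes the boundary scenario, so constant rank $r$ holds throughout $\cC_\theta$.

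\textbf{Full rank from $s > 0$.} The previous steps yield the dichotomy: either $\lambda_1 > 0$ on all of $\cC_\theta$, or $\lambda_1 \equiv 0$. Assume $s > 0$ and let $\zeta_\ast$ realize $\min s$. If $\zeta_\ast \in \operatorname{int}(\cC_\theta)$, then $\bar\nabla s(\zeta_\ast) = 0$ and $\bar\nabla^2 s(\zeta_\ast) \geq 0$, whence $\tau[s](\zeta_\ast) \geq s(\zeta_\ast)\bar g > 0$, contradicting $\lambda_1 \equiv 0$. If $\zeta_\ast \in \partial \cC_\theta$ and $\theta < \pi/2$, the Robin condition forces $\bar\nabla_\mu s(\zeta_\ast) = \cot\theta\, s(\zeta_\ast) > 0$, incompatible with $\zeta_\ast$ being a minimum; for $\theta = \pi/2$ the Neumann condition $\bar\nabla_\mu s = 0$ combined with the tangential Hessian inequality and \autoref{key-lemma-capillary-function} yields the same contradiction. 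The main obstacle is the boundary Hopf computation in the second step: the threshold $-k\cot\theta$ is calibrated precisely so that the $k$ Robin-induced contributions arising when one $\mu$-differentiates $\sigma_k$ cancel against the normal derivative of $\phi$, and verifying this exact cancellation is the delicate heart of the argument.
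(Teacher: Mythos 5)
Your blueprint has the same global architecture as the paper---an interior constant-rank step, a boundary Hopf-type analysis exploiting the Robin condition, and a final ``full rank from $s>0$'' step via the minimum point of $s$---but the interior technique you choose is genuinely different. You invoke the Bian--Guan/Guan--Ma machinery, building a smooth auxiliary function $\psi$ from elementary symmetric polynomials of the small eigenvalues and running an inductive rank argument; the paper instead adopts the viscosity approach of \cite{BIS23}, applying the operator $L$ directly to the (non-smooth) smallest eigenvalue $\lambda_1$ and treating it as a viscosity supersolution. The paper's introduction explicitly flags this as a deliberate departure from the Guan--Ma route. Each has its price: your $\psi$ is smooth so a classical Hopf lemma is in reach, whereas the paper must build an explicit exponential barrier $w$ in an annulus touching the boundary point and argue via a one-sided directional derivative of the matrix $S=\tau+\varepsilon w\bar g$ along a geodesic, precisely to sidestep the non-differentiability of $\lambda_1$.

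On the boundary step, you correctly identify that the inequality $\bar\nabla_\mu\log\phi\geq -k\cot\theta$ is calibrated for the normal second-order Robin identity, and indeed this matches what the paper obtains: differentiating $\tau(\mu,e_\alpha)=0$ tangentially gives $\bar\nabla_\mu\tau_{\alpha\beta}=(\tau_{\mu\mu}\bar g_{\alpha\beta}-\tau_{\alpha\beta})\cot\theta$, and feeding this into $\bar\nabla_\mu F=\bar\nabla_\mu f$ together with Euler's identity yields $\bar\nabla_\mu\tau_{\mu\mu}=\bigl(\bar\nabla_\mu f+f\cot\theta\bigr)/F^{\mu\mu}\geq 0$, exactly your ``exact cancellation.'' However, you stop short of actually carrying out this computation and flag it as the ``delicate heart of the argument''---so this part of your proposal, while pointing in the right direction, is unverified. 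You would also need to verify the interior differential inequality $\sigma_k^{ij}\bar\nabla_i\bar\nabla_j\psi\leq C(\psi+|\bar\nabla\psi|)$ for your polynomial $\psi$ in the half-sphere setting, which is plausible but not written out. Finally, your treatment of the minimum of $s$ is essentially identical to the paper's and is in fact slightly more careful at $\theta=\pi/2$: the paper's one-line argument ``$s(y)\cot\theta=\bar\nabla_\mu s(y)\leq 0$'' is vacuous when $\cot\theta=0$, whereas you correctly note that for the Neumann case one must appeal to the tangential Hessian, \autoref{key-lemma-capillary-function}, and the vanishing of the mixed and normal second-order terms at a constrained boundary minimum.
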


\begin{proof}

Let $F=\sigma_k^{\frac{1}{k}}$ and $f=\phi^{\frac{1}{k}}$. If $\lambda_1 = 0$ somewhere in the interior of $\cC_{\theta}$, then we can argue as in \cite{BIS23} and use that $\lambda_1$ satisfies
\eq{
L[\lambda_1]:=F^{ij} \bar{\nabla}_{i,j}^2 \lambda_1- c (\lambda_1+ |\bar{\nabla} \lambda_1|)\leq 0
}
in a viscosity sense. Here, $c=c(\dot{F},\ddot{F},|\bar\nabla \tau|)$ is uniformly bounded in $\operatorname{int}(\cC_{\theta})$.  
By the strong maximum principle, $\lambda_1\equiv 0$ in $\cC_{\theta}$.

Hence, we may suppose $\lambda_1(p) = 0$ for some $p\in\partial \cC_{\theta}$ and $\lambda_1 > 0$ in the interior. Let $\{\mu\} \cup \{e_{\alpha}\}_{\alpha\geq 2}$ be an orthonormal basis of eigenvectors of $\tau$ at $p$, and set $e_1 = \mu$, where we used \autoref{key-lemma-capillary-function}. We first compute the required boundary derivatives: By  the Gaussian formula and Weingarten equation,
\eq{
0 = e_\beta(\tau(\mu,e_\alpha)) &= \bar \nabla_{\beta}\tau_{\alpha \mu}+\tau(\bar\nabla_{e_\beta}e_\alpha,\mu)+\tau(\bar\nabla_{e_\beta}\mu,e_\alpha)\\
 &=\bar \nabla_{\beta}\tau_{\alpha \mu}+\tau(-\cot\theta \bar g_{\al\beta}\mu,\mu)+\tau(\cot\theta e_\beta,e_\alpha)\\
 &=\bar \nabla_{\mu}\tau_{\alpha \beta}-\cot\theta\tau_{\mu\mu}\bar g_{\alpha\beta}+\cot\theta\tau_{\alpha\beta},
}
where we used that $\bar\nabla \tau$ is fully symmetric. Thus, we obtain
\eq{\label{important identity}
\bar \nabla_{\mu}\tau_{\alpha\beta}=(\tau_{\mu\mu}\bar g_{\alpha\beta}-\tau_{\alpha\beta})\cot\theta.
}
Next, we differentiate the equation and deduce 
\eq{\label{pf:FRT 1}
\bar\nabla_\mu \tau_{\mu\mu} = \frac{\bar\nabla_\mu f}{F^{\mu\mu}}-\sum_{\alpha}\frac{F^{\alpha\alpha}}{F^{\mu\mu}}\bar\nabla_\mu\tau_{\alpha\alpha} = \frac{\bar\nabla_\mu f}{F^{\mu\mu}}+\sum_{\alpha}\frac{F^{\alpha\alpha}}{F^{\mu\mu}}(\tau_{\alpha\alpha}-\tau_{\mu\mu})\cot\theta.
}

\emph{Step 1}: We prove 
\eq{\tau_{ii}(p)=0\q\Ra\q \bar\n_{\mu}\tau_{ii}(p)\geq 0.}
%and $\bar{\nabla}_{\mu}\tau_{\mu\mu}\geq 0$ at $p$. 
For $i=\alpha$, this follows immediately from \eqref{important identity}. For $i=\mu$, this follows after invoking \eqref{pf:FRT 1}, using the one-homogeneity of $F$, and the boundary condition for $f$.

%Suppose $\tau_{\alpha\alpha}=0$ for some $2\leq\alpha\leq n$. Consider the geodesic $\gamma$ with $\gamma(0)=p$ and $\gamma'(0)=-\mu$ and let $v(t)$ be a vector field along $\gamma$ so that $v(0)=e_{\alpha}$ and $v'(0)=-\mu$. Then
%\eq{
%\frac{d}{dt}\Big|_{t=0^+}\tau(\gamma(t))(v(t),v(t))\geq0.
%} 
%Using \eqref{important identity}, we obtain
%\eq{
%\cot\theta (\tau_{\mu\mu}-\tau_{\alpha\alpha})=\bar \nabla_{\mu}\tau_{\alpha\alpha}\leq 0.
%}
%Using $\theta<\pi/2$, we obtain $\tau_{\mu\mu}=0$ and plugging this back into \eqref{pf:FRT 1} we obtain $\bar\n_\mu \tau_{\mu\mu}\geq 0$ using the homogeneity of $F$ and the boundary condition on $f$. 

\emph{Step 2}: Consider an interior ball $B_R(x_0)\sub\cC_\theta$ touching at $p$. Define an annular region $A_{R,\rho}=B_{R}(x_0)\setminus  \operatorname{int}(B_{\rho}(x_0))$ for some $0<\rho<R$.
For $x\in \cC_{\theta}$, let $r(x)=\operatorname{dist}(x,x_0)$ denote the distance of $x$ to $x_0$. We define
\eq{w(x)=e^{-\al R^2}-e^{-\al r(x)^2}.}
For any $x\in A_{R,\rho}$, the distance function $r(x)$ satisfies 
\eq{
\bar\nabla^2_{i,j}r(x)=\cot r(x) (\bar g_{ij}-\bar\nabla_i r \bar\nabla_j r).
}
Therefore,
\eq{
\bar \nabla_{i} w &=2\al r \bar\nabla_i r e^{-\al r^2},\\
\bar \nabla^2_{i,j} w&=-4\al^2 r^2 \bar\nabla_i r \bar \nabla_j r e^{-\al r^2}+2\al \bar\nabla_i r \bar\nabla_j r e^{-\al r^2}+2\al r \bar\nabla^2_{i,j} r e^{-\al r^2} \\
&=e^{-\al r^2}\( (-4\al^2 r^2+2\al-2\al r \cot r) \bar\nabla_i r \bar \nabla_j r+2\al r \cot r \bar g_{ij}\)
}
and
\eq{
L[w]= &~e^{-\al r^2}(-4\al^2 r^2+2\al-2\al r\cot r)|\bar\nabla r|_{\dot{F}}^2\\
 &~+2\al r\cot r e^{-\al r^2}\operatorname{tr}(\dot{F})-2 c \al r e^{-\al r^2}+c(e^{-\al r^2}-e^{-\al R^2}).
}
Assume that $\la \de_{ij} \leq F^{ij} \leq \La \de_{ij}$ in $B_R(x_0)$, where $0<\la\leq \La$ are some constants. Note that $0\leq r\cot r \leq 1$ for any $\rho \leq r\leq R$. Now take $\al>0$ sufficiently large to ensure in $A_{R,\rho}$, that 
\eq{\label{L of w}
L[w] \leq &~-e^{-\al r^2}\left[ (4\al^2r^2-2\al)\la -2\al n  \La +2c\al r-c\right]-ce^{-\al R^2}<0.
}

\emph{Step 3}: Since $\la_1(x)>0$ on $\partial B_\rho(x_0)$, there exists $\varepsilon>0$ such that 
\eq{
\psi(x):=\la_1(x)+\varepsilon w(x) > 0
}
on $\partial B_\rho(x_0)$. Note that $w = 0$ on $\partial B_R(x_0)$ and by our assumption there is no other point on $\partial B_R(x_0) \setminus \{p\}$ where $\la_1 = 0$, hence, $\psi > 0$ on $\partial B_R(x_0) \setminus \{p\}$. By the maximum principle for the viscosity supersolution $\psi$, we have $\psi\geq 0$ in the annulus, where $\psi$ is also the smallest eigenvalue of
\eq{
S_{ij}=\tau_{ij}+\varepsilon w \bar{g}_{ij}.
}
Suppose at $p$ the zero eigenvalue of $S(p)$ is attained in direction $e_{i}$, i.e. we also have $\tau_{ii} = 0$.
Let $\ga$ be a unit speed geodesic in direction $-\mu$ and $e_i$ be parallel transported along $\ga$. Then from step 1 we get
\eq{
{0\leq \frac{d}{dt}}_{|_{t=0^+}}S(\gamma(t))(e_i,e_i)=\bar\n_{-\mu}\tau_{ii}+\varepsilon \bar \nabla_{-\mu}w=-2\alpha \varepsilon R <0,
} 
a contradiction.
%Hence
%\eq{
%\bar\n_{\mu}\tau_{\mu\mu} + \ep\bar\n_{\mu}w = \bar\n_{\mu}S_{\mu\mu}<0,
%}
%at the point $p$ and we reach a contradiction to $\bar\n_{\mu}\tau_{\mu\mu} \geq 0$. 

To complete the proof, note that at a point where $s$ attains its minimum, say $y$, we must have $\la_1(y) > 0$. In fact, $y$ cannot be on the boundary; otherwise, we would have $s(y) \cot\theta = \bar\nabla_{\mu}s(y) \leq 0$. Since $y \in \operatorname{int}(\cC_{\theta})$, $\tau[s]|_{y} \geq s(y)\bar{g} >0$.

\end{proof}

\section{A priori estimates}

\begin{lemma}\label{app: CW}Let $\theta \in (0, \pi/2]$ and suppose that $\Sigma$ is a strictly convex capillary hypersurface. Denote by $r_{\Omega}$ and $R_{\Omega}$ the (standard) inner and outer radii of the convex body $\Omega = \widehat{\Sigma} \cap E_{n+1}^{\perp} \subset E_{n+1}^{\perp} \cong \bbR^n$. Then
\eq{
\frac{R_{\Omega}^2}{r_{\Omega}} \leq C \max \lambda_n,
}
where $\lambda_n$ denotes the largest principal radius of curvature at each point, and the constant $C$ depends only on $n$ and $\theta$.
\end{lemma}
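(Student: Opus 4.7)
The strategy is to reduce the estimate to a classical Chou--Wang-type bound on a closed convex body in $\bbR^n$. Since $\theta \leq \pi/2$, the orthogonal projection of $\widehat\Sigma$ onto $\partial\bbR^{n+1}_+$ equals $\Omega$, so the standard support function $\tilde h : \bbS^{n-1} \to \bbR$ of $\Omega$ (centered at its inscribed ball center) is the restriction of the support function of $\widehat\Sigma$ to horizontal unit vectors. The identification $\partial\cC_\theta = \sin\theta \cdot \bbS^{n-1}$ combined with \autoref{cap gauss param}(1) then gives $\tilde h(u) = s(\sin\theta\, u)/\sin\theta$ for $u \in \bbS^{n-1}$.

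First, I would translate the hypothesis on $\lambda_n$ along $\Sigma$ into a bound on the principal radii of $\partial\Omega$. Working at a boundary point $p = \sin\theta\, u$ of $\cC_\theta$ in an orthonormal frame $\{\mu\} \cup \{e_\alpha\}_{\alpha \geq 2}$ (as in the proof of \autoref{thm: frt}), the identities $\bar\nabla_{e_\beta}\mu = \cot\theta\, e_\beta$ (reflecting the umbilicity of $\partial\cC_\theta \subset \cC_\theta$) and $\bar\nabla_\mu s = \cot\theta\, s$ (the capillary condition) combine with a direct computation along the image in $\partial\cC_\theta$ of a unit-speed geodesic of $\bbS^{n-1}$ to yield
\eq{
\tau_{\bbS^{n-1}}[\tilde h](v,v) = \sin\theta\, \tau_{\cC_\theta}[s](\hat T, \hat T),
}
where $v \in T_u\bbS^{n-1}$ is unit and $\hat T \in T_p\partial\cC_\theta$ is the corresponding unit vector in the $\bar g$-metric. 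Restricting the positive tensor $\tau[s]$ to the tangent subspace of $\partial\cC_\theta$ then shows that the largest principal radius $M'$ of $\partial\Omega \subset \bbR^n$ satisfies $M' \leq \sin\theta\, M$, where $M := \max_\Sigma \lambda_n$.

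Second, I would invoke the classical Chou--Wang estimate for a closed convex body $K \subset \bbR^n$: $R_K^2 / r_K \leq C(n)\, M_K'$, where $M_K'$ is the maximum principal radius of $\partial K$. Typically this is proved via an ODE comparison along a great circle in $\bbS^{n-1}$ through the direction of $\max \tilde h$, using $\tilde h \geq r_K$ and $\psi'' + \psi \leq M_K'$ for $\psi = \tilde h \circ \gamma$. Applied to $K = \Omega$ and combined with the first step, this yields the claimed inequality with $C = C(n, \theta)$.

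The main obstacle is the Hessian identity in the first step. The key subtlety is the speed mismatch: a unit-speed geodesic in $\bbS^{n-1}$ lifts to a curve in $\partial\cC_\theta$ of speed $\sin\theta$, and viewed in $\cC_\theta \subset \bbS^n$ this lifted curve has a non-vanishing covariant acceleration $-\sin\theta\cos\theta\, \mu$ (it is not a geodesic of $\cC_\theta$). The $\cot\theta$ factor in the capillary condition is exactly what is needed to absorb the unwanted $-\cos^2\theta\, s$ term that then arises, leaving the clean factor of $\sin\theta$ on the right-hand side.
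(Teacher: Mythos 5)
Your strategy coincides with the paper's: reduce to the closed convex hypersurface $\partial\Omega\subset E_{n+1}^{\perp}\cong\bbR^n$ and invoke the Chou--Wang estimate \cite[Lem.\ 2.2]{CW00}. The paper obtains the $\sin\theta$ comparison between the curvature of $\partial\Omega$ and that of $\Sigma$ by citing the second fundamental form relation $\mathrm{II}_{\alpha\beta}=\hat{\mathrm{II}}_{\alpha\beta}\sin\theta$ from \cite[Prop.\ 2.4]{WWX24}, whereas you derive the equivalent Hessian identity $\tau_{\bbS^{n-1}}[\tilde h](v,v)=\sin\theta\,\tau_{\cC_\theta}[s](\hat T,\hat T)$ from scratch: the geodesic of $\bbS^{n-1}$ lifts to a curve in $\partial\cC_\theta$ of $\bar g$-speed $\sin\theta$ with covariant acceleration $-\sin\theta\cos\theta\,\mu$ in $\cC_\theta$, and the capillary Robin condition $\bar\nabla_\mu s=\cot\theta\,s$ turns the resulting $-\cos^2\theta\,s$ into exactly what is needed to recombine with $s\bar g$ and yield the clean $\sin\theta$ factor. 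That computation is correct and is the support-function dual of the WWX24 relation, so the two proofs are logically equivalent; yours is merely more self-contained. Both then pass from the extremal eigenvalue of $\tau[s]$ restricted to $T\partial\cC_\theta$ to $\max_\Sigma\lambda_n$ (eigenvalue interlacing under restriction) and conclude identically with Chou--Wang.
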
 
\begin{proof}

By \cite[Prop. 2.4]{WWX24}, the second fundamental form of $\Sigma$ in $\mathbb{R}^{n+1}_+$, $\mathrm{II}=(h_{ij})$, and the second fundamental form of $\partial \Sigma$ in $\partial \mathbb{R}^{n+1}_+$, $\hat{\mathrm{II}}=(\hat{h}_{\alpha\beta})$, $2\leq \alpha,\beta\leq n$, are related by 
\begin{equation}
\mathrm{II}_{\alpha\beta}= \hat{\mathrm{II}}_{\alpha\beta} \sin\theta.
\end{equation}
Now, the claim follows from \cite[Lem. 2.2]{CW00}.

\end{proof}

\begin{lemma}\label{cap s bound to standard s bound}Let $\theta \in (0, \pi/2]$ and suppose that $\Sigma$ is a strictly convex capillary hypersurface. If $\lvert s_{\Sigma} \rvert \leq C$, then
\eq{
\lvert \hat{s}_{\widehat{\Sigma}} \rvert \leq \frac{C}{\sin \theta}.
}
\end{lemma}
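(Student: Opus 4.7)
The plan is to transfer the $C^0$ bound on $s_\Sigma$ from the capillary cap $\cC_\theta$ to a $C^0$ bound for the standard support function $\hat s_{\widehat{\Sigma}}$ on all of $\bbS^n$. The only place where the constant deteriorates is along the equator $\partial\bbS^n_\theta$, where the contact angle $\theta$ enters geometrically and produces the loss of a factor $\sin\theta$.

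On $\bbS^n_\theta$ the diffeomorphism of \autoref{lem:nudiffeo} yields $u=\zeta+\cos\theta\, E_{n+1}$ for a unique $\zeta\in\cC_\theta$, and the definition of the capillary support function gives $\hat s_{\widehat{\Sigma}}(u)=\langle \tilde{\nu}^{-1}(\zeta),u\rangle=s_\Sigma(\zeta)$, so $|\hat s_{\widehat{\Sigma}}|\leq C$ on $\bbS^n_\theta$. For $u\in\bbS^n\setminus\bbS^n_\theta$ I would write $u=\sin\psi\, v+\cos\psi\, E_{n+1}$ with $v\in\bbS^{n-1}\subset E_{n+1}^\perp$ and $\psi\in(\theta,\pi]$, and claim $\hat s_{\widehat{\Sigma}}(u)=\sin\psi\, h_\Omega(v)$, where $\Omega=\widehat{\Sigma}\cap E_{n+1}^\perp$ and $h_\Omega$ is its support function as a convex body in $E_{n+1}^\perp\cong\bbR^n$. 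Indeed, $\widehat{\Sigma}\subset\{x_{n+1}\geq 0\}$ and $u$ lies outside the image of the Gauss map of $\Sigma$, so the maximizer of $\langle \cdot,u\rangle$ over $\widehat{\Sigma}$ must be an edge point $x_0\in\partial\Omega=\partial\Sigma$; since $x_0\in E_{n+1}^\perp$ and the supporting hyperplane of $\Omega$ at $x_0$ has outward horizontal normal $v$, one computes $\hat s_{\widehat{\Sigma}}(u)=\sin\psi\,\langle x_0,v\rangle=\sin\psi\, h_\Omega(v)$.

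The identity extends continuously to $\psi=\theta$ (and is in any case consistent with the formula for $h$ that appears in the proof of \autoref{constr hyper from s}), giving $\sin\theta\, h_\Omega(v)=\hat s_{\widehat{\Sigma}}(\sin\theta\, v+\cos\theta\, E_{n+1})$. Since the right-hand side is controlled by $C$ from the first step, $|h_\Omega(v)|\leq C/\sin\theta$ for every $v\in\bbS^{n-1}$. Inserting this back, $|\hat s_{\widehat{\Sigma}}(u)|\leq |\sin\psi|\cdot C/\sin\theta\leq C/\sin\theta$ on the complement of $\bbS^n_\theta$ as well, completing the estimate. I do not anticipate any genuine obstacle; the only point requiring care is the representation $\hat s_{\widehat{\Sigma}}(u)=\sin\psi\, h_\Omega(v)$ outside $\bbS^n_\theta$, which is an elementary normal-cone argument using $\widehat{\Sigma}\subset\{x_{n+1}\geq 0\}$ and the fact that the range of $\nu$ is precisely $\bbS^n_\theta$.
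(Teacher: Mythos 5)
Your argument is correct and follows essentially the same route as the paper's proof: bound $\hat{s}_{\widehat\Sigma}$ on $\bbS^n_\theta$ directly from $s_\Sigma$, and on the complement reduce to the horizontal support function of $\Omega=\widehat\Sigma\cap E_{n+1}^\perp$. Your identity $\hat{s}_{\widehat\Sigma}(u)=\sin\psi\, h_\Omega(v)$ is precisely the paper's $\hat{s}(u)=\langle u,\tilde u\rangle\,\hat{s}_\Omega(\tilde u)$, and both conclude by evaluating along $\partial\bbS^n_\theta$, which costs the factor $1/\sin\theta$.
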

\begin{proof}
Note that $\hat{s}(-E_{n+1})=0$ and $s(\zeta)=\hat{s}(\zeta +\cos \theta E_{n+1})$ for all $\zeta \in \cC_{\theta}$. Let $\Omega=\widehat{\Sigma}\cap E_{n+1}^{\bot}$, which we consider as a convex set in $ \bbR^{n+1}$, and write $\hat{s}_{\Omega}:\bbS^n\to \bbR$ for its the support function. For $u\in \bbS^n\setminus \operatorname{int}(\bbS^n_{\theta})$ with $u\neq -E_{n+1}$, in view of $\hat{s}_{\Omega}(x+tE_{n+1})=\hat{s}_{\Omega}(x)$ for all $t$, we have  
\eq{
    \hat{s}(u)=\hat{s}_{\Omega}(u)=\langle u,\tilde{u}\rangle \hat{s}_{\Omega}(\tilde{u}),
}
where $\ti u=\frac{u-\langle u,E_{n+1}\rangle E_{n+1}}{|u-\langle u,E_{n+1}\rangle E_{n+1}|}$. 
Hence,  
\eq{
    \hat{s}(u)=\langle u,\tilde{u}\rangle \hat{s}_{\Omega}(\tilde{u})=\frac{\langle u,\tilde{u}\rangle}{\sin \theta}\hat{s}(\sin \theta \ti u+\cos \theta E_{n+1})=\frac{\langle u,\tilde{u}\rangle}{\sin \theta}s(\sin \theta \ti u).
}

\end{proof}

\begin{lemma}\label{steiner} Let $\psi$ be one of the following functions $x^2$, $x^{p}$ for some $p<0$, or $-\log x$.  Suppose $\theta \in (0, \pi/2]$ and $\Sigma$ is a strictly convex capillary hypersurface. Then there exists a unique point $z \in \operatorname{int}(\widehat{\Sigma} \cap E_{n+1}^{\perp})$ such that
\begin{align}
\int_{\cC_{\theta}} \psi'\left(s_{\Sigma}(\zeta) - \langle \zeta, z \rangle\right) \zeta_i \, d\si = 0, \quad \forall i = 1, \ldots, n.
\end{align}
In particular, $s_{\Sigma - z} > 0$.\end{lemma}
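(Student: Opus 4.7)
Plan: The approach is variational. On the open convex set $U := \operatorname{int}\Omega$ with $\Omega := \widehat{\Sigma} \cap E_{n+1}^{\perp}$, introduce the functional
\[ \Phi(z) := \int_{\cC_\theta} \psi\bigl(s_\Sigma(\zeta) - \langle z,\zeta\rangle\bigr)\,d\sigma. \]
First I would verify that $U = \{z \in E_{n+1}^\perp : s_\Sigma - \langle z,\cdot\rangle > 0 \text{ on } \cC_\theta\}$: for $z$ in the relative interior of the flat face $\Omega$, the normal cone of $\widehat{\Sigma}$ at $z$ is the single ray $\mathbb{R}_{\geq 0}(-E_{n+1})$, so $\langle z,\nu\rangle < \hat{s}_{\widehat{\Sigma}}(\nu)$ for all $\nu \in \mathbb{S}^n_\theta$, which in capillary variables reads as the positivity claim; the converse is immediate via the representation of $h_\Omega$ through $s_\Sigma\rvert_{\partial\cC_\theta}$. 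Consequently $\Phi$ is smooth on $U$, and its Hessian
\[ D^2\Phi(z)[v,v] = \int_{\cC_\theta} \psi''\bigl(s_\Sigma - \langle z,\zeta\rangle\bigr)\langle v,\zeta\rangle^2\,d\sigma \]
is strictly positive for $v \in E_{n+1}^\perp \setminus \{0\}$, since $\psi'' > 0$ on $(0,\infty)$ and $\zeta \mapsto \langle v,\zeta\rangle$ cannot vanish identically on the $n$-dimensional cap.

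The central step is to show $\Phi$ attains its (necessarily unique by strict convexity) minimum in $U$. For a candidate boundary point $z_0 \in \partial\Omega$ there is a unique $\zeta_0 \in \partial\cC_\theta$ with $X(\zeta_0) = z_0$, and using \autoref{cap gauss param} ($X = \bar\nabla s + s\nu$) a direct computation yields
\[ f_{z_0}(\zeta_0) = 0, \quad \bar\nabla f_{z_0}(\zeta_0) = 0, \quad \bar\nabla^2 f_{z_0}(\zeta_0) = \tau[s_{\Sigma - z_0}](\zeta_0) > 0, \]
where $f_{z_0}(\zeta) := s_\Sigma(\zeta) - \langle z_0,\zeta\rangle$; thus $f_{z_0}$ vanishes quadratically at the boundary point $\zeta_0$. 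Computing the one-sided inward directional derivative,
\[ \partial^+_{-\mu_\Omega(z_0)}\Phi(z_0) = \int_{\cC_\theta}\psi'(f_{z_0})\,\langle\mu_\Omega(z_0),\zeta\rangle\,d\sigma, \]
the near-$\zeta_0$ contribution, where $\langle \mu_\Omega(z_0),\zeta_0\rangle = \sin\theta > 0$ and $\psi'(f_{z_0})$ develops a strong singularity (for $\psi = x^p$ with $p<0$ and $\psi = -\log x$, where $\psi'(x) \to -\infty$ as $x \to 0^+$), is strictly negative, often divergent; matched with the quadratic vanishing of $f_{z_0}$, this rules out any boundary minimum. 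In the remaining case $\psi = x^2$, where $\Phi$ extends smoothly and coercively to all of $\mathbb{R}^n$, the unique unconstrained minimizer is the capillary analogue of the classical Steiner point of $\widehat{\Sigma}$ and belongs to $\operatorname{int}\Omega$ by an argument analogous to the standard Steiner-point positivity theorem.

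Once the interior minimizer $z^*$ is produced, the Euler--Lagrange equation $\nabla\Phi(z^*) = 0$ is exactly the desired integral identity, and strict convexity provides uniqueness. The positivity $s_{\Sigma-z^*} > 0$ is then immediate from $z^* \in \operatorname{int}\Omega$ via the first step. The hardest part of the argument is the boundary analysis; one must carefully match the quadratic vanishing of $f_{z_0}$ at $\zeta_0 \in \partial\cC_\theta$ against the specific order of singularity of $\psi$ at $0$ to force the minimum inward, and handle the regular case $\psi = x^2$ separately via the Steiner-point-type argument.
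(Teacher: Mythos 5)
Your overall variational setup is the same as the paper's: define $\Phi(z)=\int_{\cC_\theta}\psi(s_\Sigma-\langle z,\cdot\rangle)\,d\sigma$ on $\operatorname{int}\Omega$, note its strict convexity from $\psi''>0$, and reduce the problem to excluding boundary minimizers. The computation that $f_{z_0}=s_\Sigma-\langle z_0,\cdot\rangle$ vanishes to second order at $\zeta_0=\tilde\nu(z_0)\in\partial\cC_\theta$ with $\bar\nabla^2 f_{z_0}(\zeta_0)=\tau[s](\zeta_0)>0$ is correct.

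The gap is in the boundary-exclusion step. Your plan is to show the inward one-sided derivative of $\Phi$ at $z_0$ is negative by arguing that the near-$\zeta_0$ contribution is ``strictly negative, often divergent,'' using that $\psi'(f_{z_0})\sim\psi'\bigl(|\zeta-\zeta_0|^2\bigr)\to-\infty$ while $\langle\mu_\Omega(z_0),\zeta\rangle$ is positive near $\zeta_0$. But ``often divergent'' is not always, and it is precisely the non-divergent regime that is problematic. For $\psi=x^p$ with $p<0$, the near-$\zeta_0$ integrand is of order $r^{2(p-1)}$ in a half-ball of the $n$-manifold $\cC_\theta$, which is integrable iff $p>1-n/2$; for $\psi=-\log x$ it is $\sim r^{-2}$, integrable for $n\geq 3$. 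In these regimes (e.g.\ $n\geq 3$ with $\psi=-\log$, which is exactly what is used for $k=1$ and $n\geq 3$ in \autoref{homotopy path}) the one-sided derivative is \emph{finite}, and its sign is determined by a competition between the near-$\zeta_0$ part (negative) and the rest of $\cC_\theta$, where $\langle\mu_\Omega(z_0),\zeta\rangle$ changes sign. You give no argument for why the total is negative in these cases. The paper instead pairs boundary-symmetric points $\zeta^+=(\zeta_1,\zeta')$ and $\zeta^-=(-\zeta_1,\zeta')$, proves the pointwise inequality $s(\zeta^-)\geq s(\zeta^+)$ from the supporting hyperplane at $\zeta_0$, and uses the monotone convergence theorem on $\frac{d}{dt}\cF(-tE_1)$; this yields a strictly negative (possibly $-\infty$) limit uniformly in $\psi$ and $n$, without any case analysis on the singularity order.

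The case $\psi=x^2$ is also not closed by your sketch. The minimizer of $\int_{\cC_\theta}(s-\langle z,\zeta\rangle)^2\,d\sigma$ is \emph{not} the classical Steiner point of any closed convex body (the integration is over $\cC_\theta$ only), so ``the standard Steiner-point positivity theorem'' does not apply directly. The authors' remark immediately after the lemma records that they were unable to make a reflection-to-closed-body reduction work for $\psi=x^2$ unless $\Omega$ is origin-symmetric; the paper handles $\psi=x^2$ by the very same $\zeta^+\leftrightarrow\zeta^-$ derivative computation as the singular cases, rather than by any appeal to the closed Steiner point. So both the singular and the quadratic cases need the reflection argument, and the approach via singularity matching or the classical Steiner point does not cover the full range of parameters.
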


\begin{proof}We adapt the proof from \cite{GN17} to the capillary setting. Note that for all $z \in \operatorname{int}(\widehat{\Sigma} \cap E_{n+1}^{\perp})$, it holds that $s_{\Sigma - z} > 0$. This can be seen by reflecting $\hat{\Sigma}$ across $E_{n+1}^{\perp}$, which yields a closed convex body in which $z$ lies in the interior.  Define  
\eq{
\cF(y)=\int_{\cC_\theta} \psi(s(\zeta)-\langle \zeta,y\rangle) \, d\si, \quad y\in \mrm{int}(\widehat{\Sigma}\cap E_{n+1}^\bot).
}
We will prove that $\mathcal{F}$ attains a minimum in the interior of $\Omega = \widehat{\Sigma} \cap E_{n+1}^{\perp}$. First, we note that $\mathcal{F}$ can be extended up to the boundary, as for every $y \in \partial \Omega$ the improper integral exists.

Let $y_0\in \partial \Omega=\partial \Sigma$ and suppose without loss of generality that $\cF(y_0)<\infty$.  We may translate and rotate $\Omega$ in $E_{n+1}^\bot$ so that $y_0$  lies at the origin and $E_1$ is the outer unit normal to $\partial\Omega=\partial \Sigma$, as a hypersurface of $\bbR^n$, at $y_0$. Thus, the outer unit normal of $\Sigma$ at $y_0$ is $x_0=\sin\theta E_1+\cos\theta E_{n+1}\in \partial \bbS^n_\theta$ (cf. \cite[(2.8)]{WWX24}) and $-tE_1\in \operatorname{int}(\Omega)$ for all sufficiently small $t>0$. Consequently, the function
\eq{
t \mapsto \mathcal{F}(-tE_1)
}
is $C^2$ on the interval $(0, \varepsilon)$ for sufficiently small $\varepsilon > 0$, with differentiation performed under the integral sign. We claim that, possibly after shrinking $\varepsilon$ further, the derivative with respect to $t$ is negative in this interval.

For $a^+=\sin\theta E_1\in \partial\cC_\theta$, we have  
\eq{
s(a^{+})&=\hat{s}(\sin\theta E_1+\cos\theta E_{n+1})=\max_{y\in \widehat{\Sigma}}\langle \sin\theta E_1+\cos\theta E_{n+1},y\rangle=0.
}
This implies that $\widehat{\Sigma}\subset \{ y\in \overline{\bbR^{n+1}_+}~|\sin\theta y_1+\cos\theta y_{n+1}\leq 0 \}$.  

Note that for $a^-=-\sin\theta E_1\in \partial\cC_\theta$, we must have $s(a^{-})>0$. Otherwise,  
\eq{
0\geq s(a^{-})&=s(-\sin\theta E_1)=\max_{y\in \widehat{\Sigma}}\langle -\sin\theta E_1+\cos\theta E_{n+1},y\rangle,
}
and thus $\widehat{\Sigma}\subset \{ y\in \overline{\bbR^{n+1}_+}~|-\sin\theta y_1+\cos\theta y_{n+1}\leq 0 \}$. However, this implies that  $V(\widehat{\Sigma})=0$.

Now, for any $\zeta^+=(\zeta_1,\zeta_2, \ldots, \zeta_{n+1}) \in \cC_{\theta}$ such that $\zeta_1> 0$, we define a corresponding point $\zeta^-$ by $\zeta^- = (-\zeta_1,\zeta_2, \ldots, \zeta_{n+1})$ and let $i(\zeta^+)$ be the point on $\Sigma$ such that $s(\zeta^+) = \langle \zeta^++\cos\theta E_{n+1}, i(\zeta^+) \rangle$. Note that the $\zeta_1$-component of $i(\zeta^+)$ is non-positive, since we have $\widehat{\Sigma}\subset \{ y\in \overline{\bbR^{n+1}_+}~|\sin\theta y_1+\cos\theta y_{n+1}\leq 0 \}$. Thus,  
\eq{ \label{zeta^->zeta^+}
s(\zeta^{-})=\hat{s}(\zeta^{-}+\cos\theta E_{n+1})&=\max_{y\in \widehat{\Sigma}}\langle \zeta^{-}+\cos\theta E_{n+1},y\rangle \\
& \geq \langle \zeta^{-}+\cos\theta E_{n+1},i(\zeta^+)\rangle\\
& \geq \langle \zeta^{+}+\cos\theta E_{n+1},i(\zeta^+)\rangle=s(\zeta^+).
}
Since $s(a^-)>s(a^+)$, by continuity,  the set of points with $s(\zeta^-)>s(\zeta^+)$ has a positive measure. 

We compute for $t>0$,
\eq{
\frac{d}{dt}\cF(-t E_1)&=\int_{\cC_\theta} \psi'(s(\zeta) +t\zeta_{1})\zeta_1 \, d\si\\
		&=\int_{\{\zeta\in \cC_\theta |\zeta_1>0\}}\br{\psi'(s(\zeta^+) + t\zeta_{1})-\psi'(s(\zeta^{-}) - t\zeta_{1})}\zeta_1 \, d\si.
}
Since $\psi'' > 0$, for $t_{i+1}, t_i \in (0,\epsilon) $ with $t_{i+1}\leq t_i$ we have
\eq{
\psi'(s(\zeta^-) - t_{i+1}\zeta_{1})-\psi'(s(\zeta^{+}) +t_{i+1}\zeta_{1})\geq \psi'(s(\zeta^-) - t_{i}\zeta_{1})-\psi'(s(\zeta^{+}) + t_{i}\zeta_{1}).
}
Moreover, since $s(\zeta)-t_1\langle \zeta,E_1\rangle>0$, for some constant $C$ we have
\eq{
\psi'(s(\zeta^-) - t_{1}\zeta_{1})-\psi'(s(\zeta^{+}) + t_{1}\zeta_{1}) & \geq -C.
}
Therefore, by the monotone convergence theorem:
\eq{
\lim_{t \to 0} \frac{d}{dt} \mathcal{F}(-tE_{1}) = \int_{\{\zeta \in \cC_\theta \mid \zeta_1 > 0\}} \left( \psi'(s(\zeta^+)) - \psi'(s(\zeta^{-})) \right) \zeta_1 \, d\sigma < 0.
}
That is, $t \mapsto \mathcal{F}(-tE_1)$ is decreasing for small $t > 0$. For $\psi(x)=x^2$, this shows $\cF$ does not attain its minimum on $\partial \Om$. We consider the other choices of $\psi$.

Note that for $t>0$,
\eq{\cF(-tE_1) &= \int_{\{\zeta_1\geq 0\}}\psi(s(\zeta) + t\zeta_1) + \int_{\{\zeta_1< 0\}}\psi(s(\zeta) + t\zeta_1). 
}
Since $\psi'<0$, again by the monotone convergence theorem, we find
\eq{
\cF(-tE_1)\to \cF(0)<\infty.
}
Therefore, $\mathcal{F}(-tE_1) < \mathcal{F}(0)$ for small $t > 0$.

Now suppose the minimum of $\mathcal{F}$ in $\Omega$ is attained at $z \in \operatorname{int}(\Omega)$, then 
\eq{
\left\langle D\mathcal{F}|_{z}, E_i \right\rangle = 0, \quad \forall i = 1, \ldots, n,
}
which is \eqref{entropy l2}. The uniqueness of $z$ follows from the strict convexity of the functional $\cF$:
\eq{
D^2\cF(E_i,E_j)=\int_{\cC_{\theta}}\psi''\zeta_i\zeta_j \, d\sigma, \q 1\leq i,j\leq n.
}
\end{proof}

\begin{thm}\label{steiner2} Let $\psi$ be one of the following functions $x^2$, $x^{p}$ for some $p<0$, or $-\log x$. Suppose $\theta \in (0, \pi/2]$ and $0 < h \in C^{3}(\cC_{\theta})$ is a capillary function with $\tau[h] \geq 0$. Then there is a convex body $\widehat{\Sigma} \subset \overline{\bbR^{n+1}_+}$ such that $\widehat{\Sigma} \cap E_{n+1}^{\perp}$ has non-empty interior relative to $E_{n+1}^{\perp}$ and $h(\zeta) = \hat{s}_{\widehat{\Sigma}}(\zeta + \cos\theta E_{n+1})$. Moreover, there is a unique point $z \in \operatorname{int}(\Omega)$ such that
\begin{align}\label{entropy l3}
\int_{\cC_{\theta}} \psi'\left( h(\zeta) - \langle \zeta, z \rangle \right) \zeta_i \, d\sigma = 0, \quad \forall i = 1, \ldots, n,
\end{align}
and $h(\zeta) - \langle \zeta, z \rangle > 0$ for all $\zeta \in \cC_{\theta}$.\end{thm}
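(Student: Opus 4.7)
The plan is to reduce to Lemma~\ref{steiner} via smooth approximation, then to rerun the variational argument in that lemma with $s_\Sigma$ replaced by $h$.

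\emph{Step 1 (the convex body).} Let $\ell$ be the capillary support function of $\cC_\theta$; a direct computation using $\bar\nabla^2\langle\cdot,V\rangle=-\langle\cdot+\cos\theta\, E_{n+1},V\rangle\bar g$ on $\cC_\theta$ gives $\tau[\ell]=\bar g$. After mollifying $h$ on $\cC_\theta$ to a $C^\infty$ capillary function (preserving $\tau\geq 0$ and the boundary condition), set $h_\ep:=h+\ep\ell$. Then $h_\ep$ is a capillary function with $\tau[h_\ep]=\tau[h]+\ep\bar g>0$, so Theorem~\ref{constr hyper from s} produces a smooth strictly convex capillary hypersurface $\Sigma_\ep$ with capillary support function $h_\ep$. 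Since $h_\ep\to h$ uniformly on $\cC_\theta$, the Euclidean support functions $\hat s_{\widehat{\Sigma_\ep}}$ on $\bbS^n$ (extended via the formula in Lemma~\ref{cap s bound to standard s bound}) converge uniformly, and $\widehat{\Sigma_\ep}\to\widehat\Sigma$ in Hausdorff distance to a compact convex set $\widehat\Sigma\subset\overline{\bbR^{n+1}_+}$ with $h(\zeta)=\hat s_{\widehat\Sigma}(\zeta+\cos\theta E_{n+1})$ on $\cC_\theta$. The positivity of $h$ on the compact cap, via the identity $\hat s_\Omega(e)=h(\sin\theta e)/\sin\theta>0$ for $e\in\bbS^{n-1}$, forces $\Omega:=\widehat\Sigma\cap E_{n+1}^\perp$ to have non-empty interior relative to $E_{n+1}^\perp$.

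\emph{Step 2 (the point $z$).} Define $\cF(y):=\int_{\cC_\theta}\psi(h(\zeta)-\langle\zeta,y\rangle)\,d\si$ for $y\in\operatorname{int}(\Omega)$. For such $y$, reflecting $\widehat\Sigma$ across $E_{n+1}^\perp$ yields a closed convex body (convex since $\theta\leq\pi/2$) containing $y$ in its interior, hence $h-\langle\cdot,y\rangle>0$ on $\cC_\theta$ and $\cF$ is $C^2$ and strictly convex (as $\psi''>0$). The argument in Lemma~\ref{steiner} now applies with $s_\Sigma$ replaced by $h$: its key ingredients — that $\widehat\Sigma$ has positive $(n+1)$-volume (giving $s(a^-)>0$ after normalizing a candidate boundary minimizer $y_0\in\partial\Omega$ to the origin), that $\partial\Omega$ admits an outer normal in $E_{n+1}^\perp$ at $y_0$ (supporting-hyperplane theorem), and that $h(\zeta^-)\geq h(\zeta^+)$ when $\zeta_1>0$ (from $\widehat\Sigma$ lying in the half-space $\{\sin\theta\, y_1+\cos\theta\, y_{n+1}\leq 0\}$) — are purely convex-geometric properties of the capillary body and do not require smoothness. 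Thus $\cF$ strictly decreases from any $y_0\in\partial\Omega$ toward the interior (using monotone convergence for $\psi=x^p$, $p<0$, and $\psi=-\log x$), its minimum is attained at some $z\in\operatorname{int}(\Omega)$, the Euler–Lagrange identity at $z$ is exactly \eqref{entropy l3}, and uniqueness follows from strict convexity of $\cF$. The positivity $h(\zeta)-\langle\zeta,z\rangle>0$ on $\cC_\theta$ is automatic since $z\in\operatorname{int}(\Omega)$.

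\emph{Main obstacle.} The delicate point is Step~1: verifying that the Hausdorff limit of the smooth approximants is a genuine convex body whose horizontal base $\Omega$ is full-dimensional, rather than degenerating to a lower-dimensional slice. This is precisely where the hypothesis $h>0$ on the compact cap enters, through the identity $\hat s_\Omega(e)=h(\sin\theta e)/\sin\theta$. Once $\widehat\Sigma$ is in hand, Step~2 is a direct transcription of Lemma~\ref{steiner}, since none of its steps actually use smoothness of $\Sigma$.
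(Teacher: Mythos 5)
Your proof is correct and follows essentially the same approach as the paper: approximate $h$ by strictly convex capillary support functions, pass to a Hausdorff limit, show the horizontal base $\Omega$ has nonempty interior via $\hat s_\Omega(e) = h(\sin\theta\, e)/\sin\theta > 0$, and rerun the variational argument of Lemma~\ref{steiner}. Your approximation path $h_\epsilon = h + \epsilon\ell$ is the paper's $s_t = (1-t)\ell + th$ up to rescaling, since $s_t = t\bigl(h + \tfrac{1-t}{t}\ell\bigr)$. The one thing I would push back on is the preliminary mollification you insert to meet the $C^\infty$ hypothesis of Theorem~\ref{constr hyper from s}: as stated it is nontrivial (mollifying on a manifold with boundary while simultaneously preserving $\tau \geq 0$ and the Robin condition $\bar\nabla_\mu h = \cot\theta\, h$ is not a standard operation), and it is also unnecessary — the cleaner route, which the paper takes implicitly, is to note that the conclusion of Theorem~\ref{constr hyper from s} already holds for $C^3$ capillary functions with $\tau > 0$, since the strict convexity/embeddedness argument (Ghomi's result) needs only $C^2$ regularity.
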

\begin{proof}
	Define $s_t = (1 - t)\ell + t h$ for $t \in [0,1)$. By \autoref{constr hyper from s}, $s_t$ is the capillary support function of a strictly convex capillary hypersurface $\Sigma_t$. Moreover, as $t \to 1$, the convex bodies $\widehat{\Sigma}_t$ converge in the Hausdorff distance to a convex set $\widehat{\Sigma}_1 \subset \overline{\mathbb{R}_+^{n+1}}$. Let $\Omega_t = \widehat{\Sigma}_t \cap E_{n+1}^{\bot}$ for $t\in [0,1]$. Then $\Omega_t$ also converges in the Hausdorff distance to $\Omega_1$ as $t \to 1$.  We consider $\Omega_t$ as a convex set in $\bbR^n$ and write $\hat{s}_{\Omega_t}:\bbS^{n-1}\to \bbR$ for its support function.

We claim that  $\widehat{\Sigma}_1$ is a convex body and $h(\zeta)=\hat{s}_{\widehat{\Sigma}_1}(\zeta+\cos\theta E_{n+1})$ for all $\zeta \in \cC_{\theta}$. For $t\in [0,1)$, we have
	\eq{
	s_{\Sigma_t}(\zeta)&=\hat{s}_{\widehat{\Sigma}_t}(\zeta+\cos\theta E_{n+1}),\q \forall\zeta \in \cC_{\theta}\\
	\hat{s}_{\Omega_t}(\tilde{u})&=\frac{\hat{s}_{\widehat{\Sigma}_t}(\sin \theta \tilde{u} +\cos\theta E_{n+1})}{\sin \theta},\q  \forall  \ti u \in \bbS^{n}\cap E_{n+1}^{\bot}\sim \bbS^{n-1}.
	}
Hence, after taking the limit $t\to 1$, we obtain	
	\eq{\label{key observations}
	h(\zeta)&=\hat{s}_{\widehat{\Sigma}_1}(\zeta+\cos\theta E_{n+1}),\q \forall\zeta \in \cC_{\theta}\\
	\hat{s}_{\Omega_1}(\tilde{u})&=\frac{\hat{s}_{\widehat{\Sigma}_1}(\sin \theta \tilde{u} +\cos\theta E_{n+1})}{\sin \theta},\q \forall \ti u \in  \bbS^{n-1}.
	}
Therefore $\hat{s}_{\Omega_1}>0$ in $ \bbS^{n-1}$, that is, $\Omega_1$ has non-empty interior relative to $E_{n+1}^{\bot}$.
 
Define
 \eq{
\cF(y)=\int_{\cC_\theta} \psi(h(\zeta)-\langle \zeta,y\rangle) \, d\si, \quad y\in \widehat{\Sigma}_1\cap E_{n+1}^\bot.
}
Let $y_0 \in \partial \Omega_1$. We may translate $\Omega_1$ so that $y_0=0$. By \cite[Thm. 1.12]{Bus12}, there are rectangular
coordinates in $\bbR^n$ so that an outer unit normal of $\partial \Omega_1$ is $E_1$ and $-tE_1 \in  \operatorname{int}(\Omega_1)$ for all sufficiently small $t>0$. Note that
\eq{
	0&=\hat{s}_{\Omega_1}(E_1)=\frac{\hat{s}_{\widehat{\Sigma}_1}(\sin \theta E_1 +\cos\theta E_{n+1})}{\sin \theta},\\
		0&<\hat{s}_{\Omega_1}(-E_1)=\frac{\hat{s}_{\widehat{\Sigma}_1}(-\sin \theta E_1 +\cos\theta E_{n+1})}{\sin \theta}.
}
Here, $\hat{s}_{\Omega_1}(-E_1)>0$ follows from the earlier observation that $\Omega_1$ has non-empty interior.
By the first identity, $\widehat{\Sigma}_1\subset \{ y\in \overline{\bbR^{n+1}_+}~|\sin\theta y_1+\cos\theta y_{n+1}\leq 0 \}$.
Now we may proceed as in the proof of \autoref{steiner}  with the caveat that $i(\zeta^+)$ is defined as a point (rather than the unique point) in $\widehat{\Sigma}_1$ so that
\eq{
\hat{s}_{\widehat{\Sigma}_1}(\zeta^++\cos\theta E_{n+1}) = \langle \zeta^++\cos\theta E_{n+1}, i(\zeta^+) \rangle.
}
\end{proof}

\begin{rem}
It is not clear to us whether a reflection argument could be used to prove the last statement in \autoref{steiner} for the case $\psi(x)=x^2$. That is, if
\eq{\label{eqxxx}
\int_{\cC_{\theta}} s_{\Sigma}(\zeta)\zeta_i \, d\si = 0, \quad i = 1, \ldots, n,
}
then $s>0$.  Note that from \eqref{eqxxx}, we obtain
\eq{
\int_{\bbS^n} \hat{s}_K(x) x_i \, d\si = \int_{R} \hat{s}_K(x) x_i \, d\si,
}
where $R$ is the complement of $\bbS^n_{\theta} \cup (-\bbS^n_{\theta})$ in $\bbS^n$ and $K$ is the enclosed region by $\Sigma$ and its reflection across $x_{n+1}=0$.

If $\widehat{\Sigma} \cap E_{n+1}^{\perp}$ is origin-symmetric, then the integral over $R$ vanishes. Indeed, in this case, we can simply argue as follows:
\eq{
\hat{s}_K(x) = \langle x, \tilde{x} \rangle \hat{s}_{\Omega}(\tilde{x}), \quad x \in R,
}
where $\tilde{x}$ is the projection of $x$ onto $E_{n+1}^{\perp}$ and normalized to have length one, and $\hat{s}_{\Omega}$ is the support function of the convex set $\Omega = K \cap E_{n+1}^{\perp}$. Given $x = (x_1, \ldots, x_n, x_{n+1}) \in R$, consider the reflected point  
\eq{
y = (-x_1, \ldots, -x_n, x_{n+1}) \in R.
}
Then $\tilde{y} = -\tilde{x}$ and, by the origin symmetry of $\Omega$, we have $\hat{s}_{\Omega}(\tilde{y}) = \hat{s}_{\Omega}(\tilde{x})$. Moreover,
\eq{
\langle x, \tilde{x} \rangle = \langle y, \tilde{y} \rangle, \quad \text{while} \quad \langle y, E_i \rangle = -\langle x, E_i \rangle.
}
Thus, the contributions from $x$ and $y$ cancel in the integral over $R$, and
\eq{
\int_{\bbS^n} \hat{s}_K(x) x_i \, d\si = 0, \quad i = 1, \ldots, n.
}
For $i = n + 1$, due to the reflection symmetry of $K$, we have
\eq{
\int_{\bbS^n} \hat{s}_K(x) x_{n+1} \, d\si = 0.
}
Hence, the Steiner point of $K$ is at the origin and $s_{\Sigma} > 0$, see \cite[(1.34)]{Sch14}.

However, if $\Omega$ is not origin-symmetric, we were not able to conclude that the integral over $R$ vanishes.
\end{rem}

\begin{lemma}\label{upper support func bound}
Let $\theta \in (0, \pi/2)$ and $1 \leq k \leq n$. Suppose $\Sigma$ is a strictly convex capillary hypersurface and $s_{\Sigma}$ satisfies 
\begin{align}\label{entropy l2} 
\int_{\cC_{\theta}} s_{\Sigma}(\zeta) \zeta_i \, d\si = 0, \quad \forall i = 1, \ldots, n.
\end{align}
and $\sigma_k(\tau^{\sharp}[s_{\Sigma}]) = \phi$. Then $0 < s_{\Sigma} \leq C$ for some constant depending on $\theta$ and $\phi$. Moreover, for $k = n$, we have
$
c \leq s_{\Sigma - w} \leq C'
$
for some vector $w \in \widehat{\Sigma} \cap E_{n+1}^{\bot}$ and for some constants $c, C'$ depending on $\theta$ and $\phi$.
\end{lemma}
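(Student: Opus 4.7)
The claim contains three statements: positivity $s_\Sigma > 0$, the upper bound $s_\Sigma \leq C$, and, only for $k=n$, a uniform two-sided bound $c \leq s_{\Sigma-w} \leq C'$ after a horizontal translation $w$. For positivity, the plan is to apply \autoref{steiner} with $\psi(x) = x^2$: the lemma produces a unique $z \in \operatorname{int}(\widehat\Sigma \cap E_{n+1}^\perp)$ with $\int_{\cC_\theta}(s_\Sigma(\zeta) - \langle \zeta,z\rangle)\zeta_i\,d\sigma = 0$ for $i=1,\dots,n$. The rotational symmetry of $\cC_\theta$ about the $E_{n+1}$-axis yields $\int_{\cC_\theta}\zeta_i\zeta_j\,d\sigma = A(\theta)\delta_{ij}$ with $A(\theta)>0$ for $i,j\le n$, so the hypothesis $\int s_\Sigma \zeta_i\,d\sigma = 0$ forces $z=0$, and the lemma returns $s_\Sigma > 0$.

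For the upper bound, I would argue by a blow-down compactness contradiction. If the bound failed, a sequence of strictly convex capillary solutions $\Sigma_j$ satisfying the hypotheses would exist with $M_j := \max_{\cC_\theta} s_{\Sigma_j} \to \infty$. The rescaled capillary support functions $\tilde s_j := s_{\Sigma_j}/M_j$ satisfy $\tau[\tilde s_j] \geq 0$, $0 \leq \tilde s_j \leq 1$, $\max \tilde s_j = 1$, $\int \tilde s_j \zeta_i\,d\sigma = 0$, and solve the degenerate equation $\sigma_k(\tau^\sharp[\tilde s_j]) = \phi/M_j^k \to 0$ uniformly. The scaled bodies $M_j^{-1}\widehat{\Sigma_j} \subset \overline{\bbR^{n+1}_+}$ have uniformly bounded diameter, so by Blaschke selection a subsequence converges in Hausdorff distance to a convex body $K_\infty \subset \overline{\bbR^{n+1}_+}$, with $\tilde s_j \to \tilde s_\infty := s_{K_\infty}|_{\cC_\theta}$ uniformly. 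Weak continuity of the $k$-th surface area measure then forces $S_k(K_\infty,\cdot) \equiv 0$, which by standard convex geometry implies $\dim K_\infty \leq k-1$. For the model case $k=1$, $K_\infty$ reduces to a single point $y \in \overline{\bbR^{n+1}_+}$, so $\tilde s_\infty(\zeta) = \langle \zeta+\cos\theta E_{n+1}, y\rangle$; the orthogonality forces $y_1 = \dots = y_n = 0$, reducing $\tilde s_\infty$ to $y_{n+1}(\zeta_{n+1}+\cos\theta)$ with $y_{n+1}=1$, and the capillary boundary condition at $\zeta_{n+1}=0$ then produces the absurd identity $-\sin\theta = \cos^2\theta/\sin\theta$. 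For general $k$, the analogous combination of integral orthogonality, rotational symmetry, and the Robin boundary condition should force $\tilde s_\infty\equiv 0$, contradicting $\max \tilde s_\infty = 1$. The main obstacle will be to pass the capillary boundary condition to the Hausdorff limit in a form strong enough to exclude nontrivial lower-dimensional $K_\infty$; alternatively, one can try to bypass the limit entirely via a direct integral estimate combining \autoref{steiner}, Newton--Maclaurin, and the capillary integration by parts from \autoref{sec:capillary}.

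For the two-sided bound when $k=n$, I would apply \autoref{steiner} with $\psi(x) = -\log x$ to produce the horizontal translate $w \in \operatorname{int}(\widehat\Sigma \cap E_{n+1}^\perp)$ characterized by $\int_{\cC_\theta}\zeta_i/s_{\Sigma-w}\,d\sigma = 0$ and $s_{\Sigma-w}>0$ pointwise. Since $\tau$ annihilates $\zeta^i$ for $i\leq n$, $\sigma_n(\tau^\sharp[s_{\Sigma-w}]) = \phi$ continues to hold and $s_{\Sigma-w}$ remains capillary. The upper bound $s_{\Sigma-w} \leq C'$ will then follow from the previous paragraph together with the elementary estimate $|w| \leq \max s_\Sigma/\sin\theta$, obtained by evaluating $s_\Sigma$ in the horizontal direction of $w$ on $\partial\cC_\theta$. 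For the lower bound, I would test the Monge--Amp\`ere-type equation $\sigma_n(\tau^\sharp[s_{\Sigma-w}]) = \phi$ against $1/s_{\Sigma-w}$ and integrate by parts; combining the stationarity $\int \zeta_i/s_{\Sigma-w}\,d\sigma = 0$ with a capillary Alexandrov--Fenchel-type inequality relating $\int\phi\,d\sigma$ to the capillary volume of $\widehat{\Sigma-w}$ should yield $\min s_{\Sigma-w} \geq c > 0$ --- the capillary analogue of the classical Cheng--Yau a priori estimate, adapted as in \cite{MWW23}.
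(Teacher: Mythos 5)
Your positivity step is correct and coincides with the paper's: apply \autoref{steiner} with $\psi(x)=x^2$ and use rotational symmetry of $\cC_\theta$ about the $E_{n+1}$-axis to force $z=0$.

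For the upper bound you propose a blow-down compactness argument, whereas the paper proves it directly and much more cheaply. The paper integrates by parts and applies Newton--Maclaurin to get
\eq{
c'_{n,k}\int_{\cC_\theta} s\,\phi^{\frac{k-1}{k}}\,d\sigma \leq \int_{\cC_\theta} s\,\sigma_{k-1}\,d\sigma = c_{n,k}\int_{\cC_\theta} \ell\,\phi\,d\sigma,
}
so $\int_{\bbS^n_\theta}\hat s\,d\sigma\leq C$, and then compares this to $\max\hat s$ by locating where $\hat s$ attains its maximum (either in the interior of $\bbS^n_\theta$ or on $\bbS^n\cap E_{n+1}^\perp$). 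In your argument the step where the contradiction is supposed to occur is precisely the one you flag as ``the main obstacle'': after blow-down you have a degenerate convex set $K_\infty$ and a support function $\tilde s_\infty$ that is merely a uniform limit (not a classical solution), so neither $\sigma_k(\tau^\sharp[\tilde s_\infty])=0$ nor the capillary Robin condition $\bar\nabla_\mu\tilde s_\infty=\cot\theta\,\tilde s_\infty$ can be asserted pointwise, and your explicit contradiction for $k=1$ (via $-\sin\theta=\cos^2\theta/\sin\theta$) relies exactly on differentiating $\tilde s_\infty$ at the boundary. For general $k$ you only assert the contradiction ``should'' follow. This is a genuine gap; without a rigorous way to interpret the limiting equation and boundary condition on the degenerate set the argument does not close. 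You do mention the fallback of a direct integral estimate, which is in fact what the paper does.

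For the two-sided bound when $k=n$, your route is again different and not complete. You choose $w$ from \autoref{steiner} with $\psi=-\log x$ and then suggest that ``testing the equation against $1/s_{\Sigma-w}$ and integrating by parts'' together with an Alexandrov--Fenchel inequality ``should yield'' a pointwise lower bound. An $L^1$-type inequality of this form does not automatically give a pointwise lower bound on $s_{\Sigma-w}$; one needs an additional geometric non-degeneracy input. The paper derives a volume lower bound $V(\widehat\Sigma)\geq c$ from the log-concavity of mixed volumes (\cite[Thm. 1.1]{MWWX24}), then invokes Schneider's width-inradius estimate \cite[p.\,320]{Sch14} to inscribe a ball $B_r(x)\subseteq\widehat\Sigma$ with $r$ controlled, and finally uses \cite[(2.26)]{SW24} to slide a capillary cap of radius $r/\sin\theta$ inside $\widehat\Sigma-w$ where $w$ is the horizontal projection of $x$. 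This gives $s_{\cC_{\theta,r/\sin\theta}}\leq s_{\Sigma-w}$, which is the actual source of the lower bound. You should replace the ``test against $1/s$'' heuristic by this volume-to-inball argument (or something equivalent), because as written there is no route from the integral identity to $\min s_{\Sigma-w}\geq c$.
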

\begin{proof} In view of \autoref{steiner} with $\psi(x)=x^2$, $s>0$, hence, $\hat{s}\geq 0$. By integrating by parts (cf. \cite[Cor. 2.10]{MWWX24}) and using the Newton-Maclaurin's inequality, we find 
\eq{
c'_{n,k}\int_{\cC_{\theta}} s \phi^{\frac{k-1}{k}}\, d\si\leq \int_{\cC_\theta} s\sigma_{k-1} \, d\si= c_{n,k}\int_{\cC_\theta} \ell \sigma_k \, d\si=c_{n,k}\int_{\cC_\theta} \ell \phi \, d\si.
}
Therefore,
\eq{
\int_{\bbS^n_{\theta}} \hat{s} \, d\si=\int_{\cC_{\theta}} s \, d\si\leq C.
}

Suppose the maximum of $\hat{s}$ is attained for some $u_0 \in \bbS^n \setminus \operatorname{int}(\bbS^n_{\theta})$. Note that $u_0\neq -E_{n+1}$, and in the $\bbS^n \setminus \operatorname{int}(\bbS^n_{\theta})$ there holds $\hat{s} = \hat{s}_{\Omega}$, where $\Omega = \widehat{\Sigma} \cap E_{n+1}^{\bot}$ is considered as a convex set in $\mathbb{R}^{n+1}$. Write $u_0 = \sin \tilde{\theta} \tilde{u}_0 \pm \cos \tilde{\theta} E_{n+1}$ for some $\tilde{\theta} \in (0, \pi/2]$ and $\tilde{u}_0 \in \bbS^{n-1}$. Therefore, $\hat{s}(u_0) = \sin \tilde{\theta} \hat{s}_{\Omega}(\tilde{u}_0)$. This implies that the maximum of $\hat{s}$ must be attained for some $u_0 = \tilde{u}_0 \in E_{n+1}^{\bot}$ (i.e., $\tilde{\theta} = \pi/2$). In this case, note that $\max_{\bbS^{n-1}} \hat{s}_{\Omega}$ is also attained at $u_0$, hence, we have
\eq{
(\max \hat{s})u_0 = \hat{s}_{\Omega}(u_0)u_0\in \partial \Sigma\subset \widehat{\Sigma}.
}  

Let us assume $u_0=E_1$. By the definition of $\hat{s}$, for $x\in \bbS_{\theta}^n$: 
\eq{
\hat{s}(x)\geq x_1\max \hat{s}
}
and hence,  
\eq{
\int_{\bbS^n_{\theta}} \hat{s}d\si\geq \int_{\{x_1\geq \frac{1}{2}\sin \theta\}\cap \bbS^n_{\theta}} \hat{s} \, d\si\geq (\max \hat{s})\frac{\sin \theta}{2}\int_{\{x_1\geq \frac{1}{2}\sin \theta\}\cap \bbS^n_{\theta}} \, d\si.
}

Suppose the maximum of $\hat{s}$ is attained at $u_0\in \operatorname{int}(\bbS^n_{\theta})$. Therefore, we have  $(\max \hat{s})u_0\in \Sigma$. We may assume $u_0=(\sin \theta_0,0,\ldots,0,\cos \theta_0)$ with $0\leq\theta_0<\theta$. Therefore, 
\eq{
\int_{\bbS^n_{\theta}} \hat{s} \, d\si\geq \int_{\{x_1\geq 0\}\cap \bbS^n_{\theta}} \hat{s} \, d\si
&\geq \int_{\{x_1\geq 0\}\cap \bbS^n_{\theta}} (\max \hat{s}) x_{n+1}\cos\theta_0 \, d\si\\
&\geq (\max \hat{s})\cos\theta \cos\theta_0  \int_{\{x_1\geq 0\}\cap \bbS^n_{\theta}}  d\si\\
&\geq (\max \hat{s})\cos^2\theta  \int_{\{x_1\geq 0\}\cap \bbS^n_{\theta}}  d\si.
}
Next, we prove the lower bound on $s$ when $k=n$. Note that
\eq{
\int_{\cC_{\theta}}s\sigma_n  \, d\si&=\int_{\cC_{\theta}} s\phi  \, d\si\geq \min \phi \int_{\cC_{\theta}} s \, d\si\geq c_{n,\theta}\min \phi \left(\int_{\cC_{\theta}}s\sigma_n \, d\si\right)^{\frac{1}{n+1}}.
}
To obtain the right-hand side inequality, we used \cite[Thm 1.1]{MWWX24}, from which it follows that the sequence  
\eq{
a_i = V(\widehat{\cC}_{\theta},\dots, \widehat{\cC}_{\theta},\widehat{\Sigma},\dots,\widehat{\Sigma}),\q i=0,\dots,n+1,
}  
where $\widehat{\Sigma}$ appears $i$ times, satisfies
\begin{equation}
\frac{a_1}{a_0} \geq \frac{a_2}{a_1} \geq \cdots \geq \frac{a_{n+1}}{a_n}.
\end{equation}
In particular, we have
\eq{
\frac{a_1}{a_0} \geq \left(\frac{a_{n+1}}{a_0}\right)^{\frac{1}{n+1}}.
}
Thus, $V(\widehat{\Sigma})$ is bounded below by a constant that depends only on $\theta$ and $\phi$. 

Define 
\eq{
w_- = \min_{x\in \bbS^n} (\hat{s}(-x)+\hat{s}(x)), \quad w_+ = \max_{\bbS^n} (\hat{s}(-x)+\hat{s}(x)).
}
Then we have $V(\widehat{\Sigma}) \leq w_- w_+^n$. Moreover, by \cite[p. 320]{Sch14}, we have
\begin{equation}
\frac{w_-}{n+2} B \subseteq \widehat{\Sigma} - \operatorname{centroid}(\widehat{\Sigma}),
\end{equation}
where $B$ denotes the unit ball.  Therefore, the radius of the inner ball of $\widehat{\Sigma}$ satisfies $r \geq \frac{w_-}{n+2}$. Since $\hat{s}$ is bounded above (cf. \autoref{cap s bound to standard s bound}), we can find $r = r(\theta,\phi)$ and a point $x \in \operatorname{int}(\widehat{\Sigma})$ such that  
\eq{
B_{r}(x) \subseteq \widehat{\Sigma}.
}
Now, from \cite[(2.26)]{SW24} it follows that 
$
\widehat{\cC}_{\theta,\frac{r}{\sin \theta}}\subseteq \widehat{\Sigma}-w,
$
where  $w\in \widehat{\Sigma}\cap E_{n+1}^{\bot}$ is the projection of $x$ onto $E_{n+1}^{\bot}$. By \autoref{cap s bound to standard s bound},
$
|w|\leq \frac{C}{\sin \theta}.
$
Hence,  
\eq{s_{\cC_{\theta,\frac{r}{\sin \theta}}}\leq s_{\Sigma-w}\leq C\left(1+\frac{1}{\sin \theta}\right).
}
\end{proof}
\begin{rem}\label{rem: max s}
In the proof of \autoref{upper support func bound}, we also proved that when $\theta\in (0,\pi/2)$, $\hat{s}$ attains its maximum either in $\operatorname{int}(\bbS^n_{\theta})$ or in $\bbS^n \cap E_{n+1}^{\bot}$. In particular, the maximum of $\hat{s}$ is never attained on $\partial \bbS^n_{\theta}$.
\end{rem}

\begin{lemma}\label{upper grad support func bound}
Let $\theta\in (0,\pi)$ and $\Sigma$ be a strictly convex capillary hypersurface  with $0<s_{\Sigma}\leq C$. Then $|\bar{\nabla} s|\leq \frac{C}{\sin\theta}$.
\end{lemma}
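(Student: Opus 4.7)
The plan is to leverage the geometric decomposition of the position vector from \autoref{cap gauss param}(1), namely
\eq{
X(\zeta) = \bar\nabla s(\zeta) + s(\zeta)\bigl(\zeta + \cos\theta\, E_{n+1}\bigr).
}
Since $\cC_\theta$ is a relatively open piece of the unit sphere centered at $-\cos\theta\, E_{n+1}$, the ambient unit normal to $\cC_\theta$ at $\zeta$, viewed as a hypersurface of $\bbR^{n+1}$, is precisely $\zeta + \cos\theta\, E_{n+1}$. Hence $\bar\nabla s(\zeta) \in T_\zeta \cC_\theta$ is orthogonal to it in $\bbR^{n+1}$, which yields the Pythagorean identity
\eq{
|X(\zeta)|^2 = |\bar\nabla s(\zeta)|^2 + s(\zeta)^2.
}
Bounding $|\bar\nabla s|$ is thereby reduced to a pointwise bound on $|X|$.

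For the latter, the plan is to invoke the elementary convex-geometric fact that for any point $X \in \widehat{\Sigma}$, applying the definition of the standard support function in direction $u = X/|X|$ (the case $X=0$ being trivial) gives
\eq{
|X| = \langle X, u\rangle \leq \hat{s}_{\widehat{\Sigma}}(u) \leq \max_{\bbS^n}\hat{s}_{\widehat{\Sigma}}.
}
I would then quote \autoref{cap s bound to standard s bound} to convert the hypothesis $0 < s_\Sigma \leq C$ into the bound $|\hat{s}_{\widehat{\Sigma}}| \leq C/\sin\theta$ on $\bbS^n$. Combining yields $|X|^2 \leq C^2/\sin^2\theta$, and therefore $|\bar\nabla s|^2 \leq |X|^2 \leq C^2/\sin^2\theta$.

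I do not anticipate any real obstacle here: the argument is a one-shot combination of (a) the orthogonal decomposition of $X$ on the spherical cap and (b) the standard convex-geometric bound on a point of a convex body via its support function. The factor $1/\sin\theta$ in the final constant enters solely through the conversion between the capillary support function $s$ on $\cC_\theta$ and the standard support function $\hat{s}$ on $\bbS^n$ supplied by \autoref{cap s bound to standard s bound}.
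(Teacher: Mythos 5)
Your argument is correct but takes a genuinely different route from the paper. The paper proves this by a maximum-principle argument for the quantity $\rho^2 = s^2 + |\bar\nabla s|^2$: at an interior critical point $\bar\nabla_i\rho^2 = 2\tau_{ij}\bar\nabla_j s = 0$ forces $\bar\nabla s = 0$ (by $\tau>0$), while at a boundary maximum Lemma \ref{key-lemma-capillary-function} kills the tangential derivatives and the Robin condition gives $|\bar\nabla s| = |\cot\theta|\,s$, so $\rho^2 \le s^2/\sin^2\theta$. Your proof instead observes that in the decomposition $X = \bar\nabla s + s\,(\zeta+\cos\theta E_{n+1})$ the two summands are orthogonal (tangent versus normal to the sphere of radius $1$ centered at $-\cos\theta E_{n+1}$), giving the pointwise Pythagorean identity $|X|^2 = |\bar\nabla s|^2 + s^2$, and then bounds $|X|$ by $\max_{\bbS^n}\hat s$ and invokes \autoref{cap s bound to standard s bound}. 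This is more elementary (no maximum principle, no boundary case analysis) and recovers the same constant $C/\sin\theta$.

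One small caveat: \autoref{cap s bound to standard s bound} is stated only for $\theta\in(0,\pi/2]$, whereas the present lemma claims $\theta\in(0,\pi)$, so as written your last step does not literally cover $\theta>\pi/2$. This is easy to repair: since $X\in\Sigma\subset\overline{\bbR^{n+1}_+}$, the direction $u=X/|X|$ has $u_{n+1}\ge 0$, and for $\theta\ge\pi/2$ (where $\cos\theta\le 0$) this already forces $u\in\bbS^n_\theta$, so $\hat s(u)=s(u-\cos\theta E_{n+1})\le C$ directly without any appeal to \autoref{cap s bound to standard s bound}. With this observation your argument covers the full range $\theta\in(0,\pi)$ and is correct.
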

\begin{proof}
Set $\rho^2=s^2+|\bar{\nabla} s|^2$. Suppose $\rho$ attains its maximum at $p$. If  $p\in \operatorname{int}(\cC_{\theta})$, then $\bar{\nabla} s|_p=0$ and $\rho^2\leq C^2$. Suppose $p\in\partial \cC_{\theta}$. Let $\{\mu\} \cup \{e_{\alpha}\}_{\alpha\geq 2}$ be an orthonormal basis of eigenvectors of $\tau^{\sharp}$ at $p$.  Using \autoref{key-lemma-capillary-function} we find 
\eq{
0=\bar{\nabla}_{e_\al}\rho^2=2s\bar{\nabla}_{e_\al}s+2\bar\nabla_{e_\beta}s\bar\nabla^2s(e_{\beta},e_{\al})=2\la_{\al}\bar{\nabla}_{e_\al}s.
}
Therefore,
$\bar\nabla_{e_\al}s|_p=0$ for all $\al=2,\ldots, n$ and $|\bar\nabla s|=|\bar \nabla_{\mu}s|=\abs{\cot \theta} s$. Hence, 
\eq{
\rho^2(p)\leq \frac{C^2}{\sin^2\theta}.
}

\end{proof}

\begin{lemma}\label{sigma 1 bound}
Let $\theta\in (0,\pi/2)$ and $1\leq k\leq n$.
Suppose $\Sigma$ is a strictly convex capillary hypersurface and $s_{\Sigma}$ satisfies $\sigma_k(\tau^{\sharp}[s_{\Sigma}])=\phi$. Then,
$\sigma_1(\tau^{\sharp}[s_{\Sigma}])\leq C
$
for some constant depending only on $\theta$ and $\phi$.
\end{lemma}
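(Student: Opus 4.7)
The plan is to apply the maximum principle to the test function $W := \sigma_1(\tau^\sharp[s]) = \tr_{\bar g}\tau[s]$, after modification by a barrier if needed.

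\emph{Interior analysis.} Set $F := \sigma_k^{1/k}$ and $f := \phi^{1/k}$. Differentiating $F(\tau^\sharp) = f$ twice and tracing yields
\eq{
F^{ij}\Delta\tau_{ij} + F^{ij,pq}\bar g^{ml}\bar\nabla_m\tau_{ij}\bar\nabla_l\tau_{pq} = \Delta f.
}
On $\bbS^n$, the Ricci identity combined with the Codazzi-type symmetry $\bar\nabla_l\tau_{ij} = \bar\nabla_i\tau_{lj}$ gives
\eq{
\Delta\tau_{ij} = \bar\nabla_i\bar\nabla_j W - \bar g_{ij}\,W + n\tau_{ij}.
}
Using concavity of $F$ on the cone of nonnegative symmetric matrices and the identity $F^{ij}\tau_{ij} = f$, one deduces
\eq{
F^{ij}\bar\nabla_i\bar\nabla_j W \geq \Delta f + F^{ij}\bar g_{ij}\cdot W - nf.
}
Since $\tau\geq 0$ and $\sigma_k(\tau) = \phi$ is bounded below, the Newton--Maclaurin inequality yields $\sigma_{k-1}(\tau)\geq c_{n,k}\,\phi^{(k-1)/k}$, so $F^{ij}\bar g_{ij} = (n-k+1)\sigma_{k-1}/(k\phi^{(k-1)/k})\geq c(n,k) > 0$. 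At an interior maximum of $W$, the left-hand side is $\leq 0$, yielding $W \leq C(n,k,\phi)$.

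\emph{Boundary analysis.} Suppose the maximum is attained at some $p\in \partial\cC_\theta$. Choose an orthonormal frame $\{\mu\}\cup\{e_\alpha\}_{\alpha\geq 2}$ diagonalizing $\tau$ at $p$ (possible by \autoref{key-lemma-capillary-function}). The capillary boundary identities from the proof of \autoref{thm: frt}, namely $\bar\nabla_\mu\tau_{\alpha\beta} = (\tau_{\mu\mu}\bar g_{\alpha\beta} - \tau_{\alpha\beta})\cot\theta$ for $\alpha,\beta\geq 2$ together with \eqref{pf:FRT 1}, sum to
\eq{
\bar\nabla_\mu W = \frac{\bar\nabla_\mu f}{F^{\mu\mu}} + \cot\theta\sum_{\alpha\geq 2}(\tau_{\alpha\alpha} - \tau_{\mu\mu})\left(\frac{F^{\alpha\alpha}}{F^{\mu\mu}} - 1\right).
}
Since $F^{ii}$ is decreasing in $\lambda_i$ for diagonal $\tau$ and $\cot\theta > 0$, each summand is nonpositive, so $\bar\nabla_\mu W \leq \bar\nabla_\mu f/F^{\mu\mu}$. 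To exclude a boundary maximum when $W$ is large, I introduce the barrier $h := \sin^2\theta - \ell = \cos\theta\,\zeta_{n+1}\geq 0$, which vanishes on $\partial\cC_\theta$ and satisfies $\bar\nabla_\mu h = -\cot\theta\sin^2\theta < 0$ there (using $\bar\nabla_\mu\ell = \cot\theta\,\ell$ and $\ell|_{\partial\cC_\theta} = \sin^2\theta$). For $A>0$ sufficiently large, Hopf's lemma applied to $\tilde W := W + Ah$ rules out a boundary maximum, since $\bar\nabla_\mu\tilde W = \bar\nabla_\mu W - A\cot\theta\sin^2\theta$ becomes strictly negative once $F^{\mu\mu}$ admits a uniform lower bound in regimes where $W$ is large. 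The interior analysis for $\tilde W$ is a bounded perturbation of that for $W$ (since $\bar\nabla^2 h$ and $h$ are uniformly bounded), yielding $\tilde W \leq C$, and hence $W \leq C$.

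The main obstacle is the boundary case, and specifically establishing a uniform lower bound for $F^{\mu\mu}$ along $\partial\cC_\theta$ whenever $W$ is large there. This must exploit the equation $\sigma_k(\tau) = \phi$ on the boundary, where $\tau_{\mu\alpha} = 0$ for $\alpha\geq 2$ (by \autoref{key-lemma-capillary-function}) and $\sigma_k$ splits as $\tau_{\mu\mu}\sigma_{k-1}(\tau') + \sigma_k(\tau')$ in the tangential block $\tau'$; combined with Newton--Maclaurin on $\tau'$, this forces $\sigma_{k-1}(\tau') = kF^{\mu\mu}\phi^{(k-1)/k}$ to stay bounded away from zero. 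The positivity of $\cot\theta$ for $\theta\in(0,\pi/2)$ and the barrier structure provided by $\ell$ are both essential.
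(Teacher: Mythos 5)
Your interior argument and the boundary identity for $\bar\nabla_\mu W$ are correct and match the paper's; the gap lies in what you do with that identity. By observing that each summand in $\cot\theta\sum_\alpha(\lambda_\alpha-\lambda_\mu)(F^{\alpha\alpha}-F^{\mu\mu})/F^{\mu\mu}$ is nonpositive, you throw away the whole sum and keep only $\bar\nabla_\mu W\le\bar\nabla_\mu f/F^{\mu\mu}$, which at a boundary maximum carries no information about $W$. The paper instead retains the sum and expands it using the one-homogeneity of $F$, $\sum_iF^{ii}\lambda_i=f$, to write
\[
0\le F^{\mu\mu}\bar\nabla_\mu W
= \bar\nabla_\mu f + \cot\theta\bigl(f - F^{\mu\mu}\sigma_1 + nF^{\mu\mu}\lambda_\mu - \lambda_\mu\operatorname{tr}\dot F\bigr),
\]
and the retained $-F^{\mu\mu}\sigma_1\cot\theta$ term yields $\sigma_1(p)\le(n+1)f/F^{\mu\mu}+|\bar\nabla_\mu f|/(\cot\theta\,F^{\mu\mu})$ directly, with no barrier.

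Your barrier scheme would need the uniform lower bound $F^{\mu\mu}\ge c(\theta,\phi)>0$ along $\partial\cC_\theta$, and you do not establish it; the route you sketch does not produce one. The splitting $\phi=\lambda_\mu\sigma_{k-1}(\tau')+\sigma_k(\tau')$ together with Newton--Maclaurin on $\tau'$ only gives $\sigma_{k-1}(\tau')\ge c\,\phi/\lambda_\mu$ when $\sigma_{k-1}(\tau')$ is small, i.e.\ a lower bound on $F^{\mu\mu}$ of order $1/W$, which makes your barrier constant $A$ depend on $W$ and so fails to close. The uniform bound does not come from the equation alone; the paper extracts it from the displayed identity via the $-\lambda_\mu\operatorname{tr}\dot F$ term. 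From the equation and Newton--Maclaurin one gets $\min\phi\le c_2\lambda_\mu F^{\mu\mu}+c_3(F^{\mu\mu})^{k/(k-1)}$, hence $-\lambda_\mu\le c_2^{-1}\bigl(c_3(F^{\mu\mu})^{1/(k-1)}-\min\phi/F^{\mu\mu}\bigr)$; substituting this into the inequality (after dropping the nonpositive $-F^{\mu\mu}\sigma_1\cot\theta$) and using $\operatorname{tr}\dot F\ge c_{n,k}$ shows that the right-hand side tends to $-\infty$ as $F^{\mu\mu}\to0$, forcing $F^{\mu\mu}\ge c(\theta,\phi)>0$. This coupled use of the boundary identity with the equation is the step your proposal is missing.
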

\begin{proof}
Recall that $F = \sigma_k^{\fr 1k}$ and $f = \phi^{\fr 1k}$. If the maximum of $\sigma_1$ is attained in the interior of $\cC_{\theta}$, then
\eq{
0 \geq F^{ij}\bar\nabla^2_{i,j}\si_1&= -\bar g^{mn}F^{ij,pq}\bar\nabla_{m}\tau_{ij}\bar\nabla_n \tau_{pq} -nf+\bar\Delta f+F^{ij}\bar g_{ij}\si_1 \\
&\geq -nf+\bar\Delta f+F^{ij}\bar g_{ij}\si_1,
}
where we used $\bar{g}^{mn} F^{ij,pq} \bar{\nabla}_{m} \tau_{ij} \bar{\nabla}_{n} \tau_{pq} \leq 0$ (due to the concavity of $F$) and the one-homogeneity of $F$. The claim follows from $\operatorname{tr}(\dot{F}) \geq c_{n,k}$ (again due to the concavity of $F$).

Now we need to treat the case that the maximum of $\sigma_1$ is attained at a boundary point, say $p$. Then at $p$ we have
\eq{\label{second proof}
0\leq F^{\mu\mu}\bar\nabla_{\mu}\sigma_1&=\cot\theta \sum_{i}(F^{\mu\mu}-F^{ii})(\la_{\mu}-\la_{i})+\bar{\nabla}_{\mu}f\\
&= \cot \theta (f-F^{\mu\mu}\sigma_1+\sum_iF^{\mu\mu}\la_{\mu}-\sum _{i}F^{ii}\la_{\mu})+\bar{\nabla}_{\mu}f\\
&\leq \cot \theta \left((n+1)f-F^{\mu\mu}\sigma_1-\sum _{i}F^{ii}\la_{\mu}\right)+\bar{\nabla}_{\mu}f.
}
Therefore, 
\eq{\label{si1 bound 2nd proof}
\sigma_1(p)\leq \frac{\max |\bar{\nabla}_{\mu}f|}{\cot \theta F^{\mu\mu}(p)}+(n+1)\frac{f(p)}{F^{\mu\mu}(p)}.
}
Next we show that $F^{\mu\mu}$ cannot be very small. Note that
\eq{\label{dot F mumu estimate}
c:=\min \phi\leq \phi=\si_k(\la)&=\la_\mu \si_{k-1}(\la|\la_\mu)+\si_k(\la|\la_\mu)\\
                     &\leq \la_\mu  \si_{k-1}(\la|\la_\mu)+c_0\si_{k-1}(\la|\la_\mu)^\frac{k}{k-1}\\
                     &\leq c_2 \la_{\mu}F^{\mu\mu}+c_3 (F^{\mu\mu} )^{\frac{k}{k-1}}.
}
Using this in \eqref{second proof}, we obtain
\eq{
0\leq F^{\mu\mu}\bar\nabla_{\mu}\sigma_1&\leq ((n+1)f-\sum _{i}F^{ii}\la_{\mu})\cot \theta +\bar{\nabla}_{\mu}f\\
&\leq (n+1) f\cot \theta+ \frac{1}{c_2}\sum_{i}F^{ii}\left(-\frac{c}{F^{\mu\mu}}+c_3(F^{\mu\mu})^{\frac{1}{k-1}}\right)+\bar{\nabla}_{\mu}f.
}
Due to concavity of $F$, we have $\operatorname{tr}(\dot{F})\geq c_{n,k}$. Hence $F^{\mu\mu}$ cannot be small, and in view of \eqref{si1 bound 2nd proof}, $\sigma_1$ is bounded above and the bound depends only on $\theta,\phi$.

\end{proof}

\begin{prop}\label{prop: reg estim 2}
Let $\theta\in (0,\pi/2)$ and $1\leq k\leq n$. Suppose $\Sigma$ is a strictly convex capillary hypersurface whose $s_{\Sigma}$ satisfies \eqref{entropy l2} and $\sigma_k(\tau^{\sharp}[s_{\Sigma}])=\phi$. Then for any $m\geq 1$ we have $\|s_{\Sigma}\|_{C^{m}}\leq C_m$ for some constant depending only on $\theta, \phi$. 
\end{prop}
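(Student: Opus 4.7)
The plan is to assemble the a priori bounds of the preceding lemmas, invoke the Full Rank Theorem to obtain uniform ellipticity, and then appeal to Evans--Krylov plus Schauder bootstrap tailored to concave fully nonlinear oblique boundary value problems.

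First I would chain the three uniform bounds already established. \autoref{upper support func bound} yields $0 < s_\Sigma \leq C_0(\theta,\phi)$; feeding this into \autoref{upper grad support func bound} gives $|\bar\nabla s_\Sigma|\leq C_1(\theta,\phi)$; and \autoref{sigma 1 bound} then furnishes $\sigma_1(\tau^\sharp[s_\Sigma]) = \operatorname{tr}_{\bar g}\tau[s_\Sigma] \leq C_2(\theta,\phi)$. Since $\Sigma$ is strictly convex, $\tau^\sharp[s_\Sigma] > 0$, so the trace bound places every principal radius $\lambda_i$ in the interval $(0,C_2]$. This is the desired uniform $C^2$ estimate on $s_\Sigma$.

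Next I would invoke \autoref{thm: frt}, whose hypotheses are compatible with the standing assumptions on $\phi$, to conclude $\lambda_1 > 0$ throughout $\cC_\theta$. Combined with the upper bound on the eigenvalues from the previous step, the linearization $F^{ij}$ of $F=\sigma_k^{1/k}$ at $\tau^\sharp[s_\Sigma]$ becomes uniformly elliptic with ellipticity constants depending only on $(\theta,\phi)$. The equation $F(\tau^\sharp[s_\Sigma])=\phi^{1/k}$ is concave in $\bar\nabla^2 s_\Sigma$, and the boundary condition $\bar\nabla_\mu s_\Sigma = \cot\theta\,s_\Sigma$ is linear and oblique (its obliqueness is immediate for $\theta \in (0,\pi/2)$). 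Applying the Evans--Krylov theorem in the oblique setting (Lieberman--Trudinger) then yields a uniform $C^{2,\alpha}$ bound on $s_\Sigma$.

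From here the higher regularity follows by a standard bootstrap: differentiating the equation produces, for each component of $\bar\nabla s_\Sigma$, a linear uniformly elliptic equation with $C^\alpha$ coefficients together with a linear oblique boundary condition, so Schauder theory for oblique problems upgrades $s_\Sigma$ to $C^{3,\alpha}$, and iterating yields $\|s_\Sigma\|_{C^{m,\alpha}} \leq C_m(\theta,\phi)$ for every $m\geq 1$. The expected main obstacle is promoting the \emph{qualitative} Full Rank statement $\lambda_1 > 0$ to the \emph{quantitative} lower bound $\lambda_1 \geq c(\theta,\phi) > 0$ needed for uniform ellipticity; this should follow by revisiting the proof of \autoref{thm: frt}, which in fact places the minimum point $y$ of $s_\Sigma$ in $\operatorname{int}(\cC_\theta)$ with $\tau[s_\Sigma]|_y \geq s_\Sigma(y)\bar g$, and then propagating this positivity via the viscosity inequality together with the already-established $C^0$ and $C^2$ bounds, all of which depend only on $(\theta,\phi)$.
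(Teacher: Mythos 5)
Your chaining of the $C^0$, $C^1$, and $C^2$ estimates (\autoref{upper support func bound}, \autoref{upper grad support func bound}, \autoref{sigma 1 bound}) is exactly what the paper does, and the exit via Lieberman--Trudinger and Schauder is also right. However, the passage to uniform ellipticity contains both an unnecessary detour and a misdiagnosis of the difficulty.

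Invoking the Full Rank Theorem here is superfluous: the proposition already \emph{assumes} $\Sigma$ is strictly convex, so $\tau^{\sharp}[s]>0$ is part of the hypotheses (and the proposition places no assumptions on $\phi$ of the kind \autoref{thm: frt} requires, so it is not even clear you could apply it). More importantly, what you call the ``main obstacle''—upgrading $\lambda_1>0$ to $\lambda_1\ge c(\theta,\phi)>0$—is a phantom: for $1\le k<n$ the $\sigma_k$-operator can be uniformly elliptic even as $\lambda_1\to 0$, and conversely the qualitative $\lambda_1>0$ together with $\lambda_n\le C$ is \emph{not} enough. The genuine source of uniform ellipticity is the equation. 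With $0<\lambda_1\le\cdots\le\lambda_n\le C$ and $\sigma_k(\lambda)=\phi\ge\min\phi>0$, the splitting
\eq{
\sigma_k(\lambda)=\lambda_n\,\sigma_{k-1}(\lambda|\lambda_n)+\sigma_k(\lambda|\lambda_n)\le\Bigl(\lambda_n+\tfrac{n-k}{k}\,\lambda_{n-1}\Bigr)\sigma_{k-1}(\lambda|\lambda_n)
}
bounds the smallest eigenvalue $\sigma_{k-1}(\lambda|\lambda_n)$ of $\dot\sigma_k$ below in terms of $\min\phi$ and the $C^2$ bound alone; this is precisely the global version of the boundary estimate \eqref{dot F mumu estimate} already carried out for $F^{\mu\mu}$ in the proof of \autoref{sigma 1 bound}. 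So once the $C^2$ estimate is in place, uniform ellipticity is automatic, no quantitative full-rank statement is needed, and the route you sketch (propagating positivity of $\lambda_1$ via the viscosity inequality) is not only unnecessary but would require a Harnack-type or compactness argument that the maximum-principle proof of \autoref{thm: frt} does not provide.
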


\begin{proof}
By \autoref{steiner} and \autoref{upper support func bound}, we have $0 < s \leq C_1$. Hence, by \autoref{upper grad support func bound} and \autoref{sigma 1 bound}, $\|s\|_{C^2}\leq C_2$ for some constant depending only on $\theta, \phi$. The higher-order regularity follows from \cite{LT86} and Schauder estimate.

\end{proof}

\section{Continuity method}

To prove \autoref{CM-thm1}, we need to construct a suitable homotopy path that stays within the appropriate function class and along which we can establish uniform estimates. We now present the necessary preliminary results.

\begin{lemma}\label{non-collapsing En+1 dirc}Let $\theta \in (0, \pi/2]$. Consider a sequence of strictly convex capillary hypersurfaces $\{\Sigma_i\}$ with uniform $C^2$-norm bound on their capillary support functions $\{s_{\Sigma_i}\}$ and satisfying \eqref{entropy l2}. Suppose $\sigma_k(\tau^{\sharp}[s_{\Sigma_i}]) \geq c_0 > 0$ for all $i$. Then there is a constant $c > 0$ such that
\eq{
\hat{s}_{\widehat{\Sigma}_i}(E_{n+1}) \geq c \quad \forall i.
}
\end{lemma}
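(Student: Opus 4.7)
The plan is to argue by contradiction. Assume that, along a subsequence (not relabeled), $\hat{s}_{\widehat{\Sigma}_i}(E_{n+1}) \to 0$. The uniform $C^2$-bound together with Arzel\`{a}--Ascoli yields a further subsequence with $s_{\Sigma_i} \to s_\infty$ in $C^{1,\alpha}$ for some $\alpha \in (0,1)$; equivalently, $\widehat{\Sigma}_i \to \widehat{\Sigma}_\infty$ in Hausdorff distance with support function $\hat{s}_\infty$. Because each $\widehat{\Sigma}_i$ is a capillary body touching $\partial \bbR^{n+1}_+$, one has $\hat{s}_i(-E_{n+1}) = 0$. Passing to the limit together with the contradiction hypothesis gives $\hat{s}_\infty(\pm E_{n+1}) = 0$, so $\widehat{\Sigma}_\infty \subset E_{n+1}^{\bot}$ is flat.

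The key step is to show $\widehat{\Sigma}_\infty$ is in fact a single point. Parametrize $u \in \bbS^n$ near $E_{n+1}$ as $u = \cos\phi \, E_{n+1} + \sin\phi \, \omega$ with $\omega \in \bbS^{n-1} \subset E_{n+1}^{\bot}$, and view $\Omega := \widehat{\Sigma}_\infty$ as a convex subset of $E_{n+1}^{\bot} \cong \bbR^n$ with $n$-dimensional support function $h_\Omega \colon \bbS^{n-1} \to \bbR$. Since $\Omega \subset E_{n+1}^{\bot}$, one has $\hat{s}_\infty(u) = \sin\phi \, h_\Omega(\omega)$. But $s_\infty \in C^{1,\alpha}$ forces $\hat{s}_\infty$ to be differentiable at $E_{n+1} \in \operatorname{int}(\bbS^n_\theta)$, and because $|u - E_{n+1}| \asymp \phi$ as $\phi \to 0^+$, this forces $h_\Omega$ to be linear on $\bbS^{n-1}$; the only convex body whose support function is linear is a single point $\{p\}$. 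With $\widehat{\Sigma}_\infty = \{p\}$ and $p \in E_{n+1}^{\bot}$, we have $s_\infty(\zeta) = \langle p, \zeta\rangle = \sum_{i=1}^{n} p_i \zeta_i$. Passing \eqref{entropy l2} to the limit and using the rotational-symmetry identity $\int_{\cC_\theta} \zeta_i \zeta_j \, d\sigma = c\, \delta_{ij}$ with $c > 0$ (invariance of $\cC_\theta$ under rotations about the $E_{n+1}$-axis) forces $p = 0$; therefore $s_{\Sigma_i} \to 0$ uniformly.

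The final contradiction uses $\sigma_k(\tau^\sharp[s_{\Sigma_i}]) \geq c_0$. By the integration-by-parts identity \cite[Cor.~2.10]{MWWX24} (already invoked in the proof of \autoref{upper support func bound}),
\[
\int_{\cC_\theta} s_{\Sigma_i} \, \sigma_{k-1}(\tau^\sharp[s_{\Sigma_i}]) \, d\sigma = c_{n,k} \int_{\cC_\theta} \ell \, \sigma_k(\tau^\sharp[s_{\Sigma_i}]) \, d\sigma \geq c_{n,k}\, c_0 \int_{\cC_\theta} \ell \, d\sigma > 0
\]
uniformly in $i$. On the other hand, $|\sigma_{k-1}(\tau^\sharp[s_{\Sigma_i}])| \leq C$ from the $C^2$-bound, and $s_{\Sigma_i} \to 0$ uniformly, so the left-hand side tends to $0$---a contradiction. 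The main obstacle is the first key step: only $C^{1,\alpha}$ (and not $C^3$) regularity for $s_\infty$ is available, so \autoref{steiner2} does not apply directly. The trick is to extract the geometry of $\widehat{\Sigma}_\infty$ from mere first-order differentiability of $\hat{s}_\infty$ at $E_{n+1}$, using that any flat convex body other than a point produces a kink in its support function at the direction orthogonal to its affine hull.
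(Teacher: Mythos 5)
Your argument is correct and matches the paper's proof in all essentials: assume $\hat{s}_{\widehat{\Sigma}_i}(E_{n+1})\to 0$, conclude that the Hausdorff limit is flat in $E_{n+1}^{\perp}$, use differentiability of the limit support function at $E_{n+1}$ (inherited from the uniform $C^2$-bound on the interior of $\cC_\theta$) to force the limit body to be a single point, identify that point as the origin so that $s_{\Sigma_i}\to 0$ uniformly, and then contradict this against the integration-by-parts identity $\int_{\cC_\theta} s_{\Sigma_i}\,\sigma_{k-1}\,d\sigma = c_{n,k}\int_{\cC_\theta}\ell\,\sigma_k\,d\sigma\geq c_0c_{n,k}\int_{\cC_\theta}\ell\,d\sigma$ combined with the uniform $C^2$-bound on $\sigma_{k-1}$. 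The only cosmetic differences are that you unpack the paper's citation of \cite[Cor.~1.7.3]{Sch14} (differentiability of $\hat{s}$ at $u$ is equivalent to the face $F(K,u)$ being a singleton) into an explicit parametrization computation near $E_{n+1}$, and you pin the limit point to the origin via \eqref{entropy l2} together with the rotational symmetry of $\cC_\theta$, whereas the paper invokes \autoref{steiner} to place $0$ in the interior of each $\widehat{\Sigma}_i\cap E_{n+1}^{\perp}$ and hence in the singleton limit --- both are sound.
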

\begin{proof}
Suppose that along a subsequence, $\hat{s}_{\widehat{\Sigma}_i}(E_{n+1})\to 0$. Hence, due to \autoref{cap s bound to standard s bound}, $\widehat{\Sigma}_i$ converges in the Hausdorff distance to a convex set $K\subset E_{n+1}^{\bot}$. Recall that $s_{\Sigma_i}(\zeta)=\hat{s}_{\widehat{\Sigma}_i}(\zeta +\cos \theta E_{n+1})$ for all $\zeta \in \cC_{\theta}$ and by our assumption $\|s_{\Sigma_i}\|_{C^2}$ is uniformly bounded above. Therefore, $\hat{s}_{K}$ is differentiable at $E_{n+1}$, and by \cite[Cor. 1.7.3]{Sch14}, the set
\eq{
F(K,E_{n+1}):=E_{n+1}^{\bot}\cap K=K
}
must contain only one point. By \autoref{steiner}, $0\in \operatorname{int}(\widehat{\Sigma}_i\cap E_{n+1}^{\bot})$. Therefore, $K=\{0\}$, $\hat{s}_{\widehat{\Sigma}_i}\to 0$ and $s_{\Sigma_i}\to 0$ uniformly (i.e. in the $\sup$-norm). 

Now, by integrating by parts (cf. \cite[Cor. 2.10]{MWWX24}) and the uniform upper bound on $\sigma_{k-1}^{\Sigma_i}$, we find 
\eq{
C\int_{\cC_\theta} s_{\Sigma_i} \, d\si&\geq
\int_{\cC_\theta} s_{\Sigma_i}\sigma_{k-1}^{\Sigma_i} \, d\si= c_{n,k}\int_{\cC_\theta} \ell \sigma_k^{\Sigma_i} \, d\si\geq c_0c_{n,k}\int_{\cC_\theta} \ell  \, d\si.
}
Taking the limit as $i\to\infty$, we arrive at a contradiction.
%For the case $k>1$, we could also argue as follows to obtain a contradiction from $s_{\Sigma_i}\to 0$. By  \cite[Prop. 2.4]{WWX24}, on $\partial\cC_{\theta}$, we have
%\eq{
%\phi=(\sin \theta)^{-k}\sigma_k^{\partial \Sigma_i}+(\sin \theta)^{-k+1}\lambda_{\mu}^{\Sigma_i}\sigma_{k-1}^{\partial \Sigma_i},
%}
%where $\la_{\mu}^{\Sigma_i}:=\bar \nabla^2s_{\Sigma_i}(\mu,\mu)+s_{\Sigma_i}$.
%Therefore, by Newton-Maclaurin's inequality 
%\eq{
%\max_{\cC_{\theta}} \frac{s_{\Sigma_i}}{\sin\theta}\geq \max_{\partial\cC_{\theta}} \frac{s_{\Sigma_i}}{\sin\theta}=\max\limits_{\bbS^{n-1}} \hat{s}_{\Omega_i}\geq c
%} 
%for some constant $c$ depending on $\phi$ and $\|s_{\Sigma_i}\|_{C^2}$. Here, $\Omega_i:=\widehat{\Sigma}_i\cap E_{n+1}^{\bot}$. 
\end{proof}

\begin{thm}\label{lower support bound steiner}
Under the assumptions of \autoref{non-collapsing En+1 dirc}, 
let $\Omega_i=\widehat{\Sigma}_i\cap E_{n+1}^{\bot}$ and assume that $\max_{\bbS^{n-1}} \hat{s}_{\Omega_i}\geq c_1>0$ for all $i$, where $\hat{s}_{\Omega_i}$ denotes the support function of $\Omega_i$ as a convex body in $\bbR^n$.
Then we have $V(\widehat{\Sigma}_i)\geq c_2>0$ for all $i$.
\end{thm}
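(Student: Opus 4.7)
The plan is to argue by contradiction, reducing the $(n+1)$-volume bound to a lower bound on the $n$-volume of the base $\Omega_i$ and then running a $C^{1,1}$-rigidity argument for the limit of those bases. By \autoref{non-collapsing En+1 dirc} there exists a point $p_i\in\widehat{\Sigma}_i$ with $(p_i)_{n+1}\geq c>0$, and since $\widehat{\Sigma}_i$ is convex and contains both $\Omega_i\subset E_{n+1}^{\perp}$ and $p_i$, it contains the cone $\operatorname{conv}(\Omega_i\cup\{p_i\})$, whose $(n+1)$-dimensional volume equals $\tfrac{(p_i)_{n+1}}{n+1}\vol_n(\Omega_i)$. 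This yields $V(\widehat{\Sigma}_i)\geq \tfrac{c}{n+1}\vol_n(\Omega_i)$, so it is enough to prove $\vol_n(\Omega_i)\geq c_3>0$ uniformly in $i$.

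For this I would use the identity $\hat{s}_{\Omega_i}(\tilde u)=s_{\Sigma_i}(\sin\theta\,\tilde u)/\sin\theta$ on $\bbS^{n-1}$, which transfers the uniform $C^2$-bound on $s_{\Sigma_i}$ to a uniform $C^2$-bound on the Euclidean support functions $\hat{s}_{\Omega_i}\colon\bbS^{n-1}\to\bbR$. The orthogonality \eqref{entropy l2} together with \autoref{steiner} (applied with $\psi(x)=x^2$) gives $0\in\operatorname{int}(\Omega_i)$ for every $i$. Suppose now, for contradiction, that $\vol_n(\Omega_{i_j})\to 0$ along some subsequence. By Blaschke selection extract a further subsequence with $\Omega_{i_j}\to\Omega_\infty$ in Hausdorff distance and $\hat{s}_{\Omega_{i_j}}\to\hat{s}_{\Omega_\infty}$ in $C^{1,\alpha}$; the uniform $W^{2,\infty}$-bound forces $\hat{s}_{\Omega_\infty}\in C^{1,1}(\bbS^{n-1})$. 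The limit inherits $0\in\Omega_\infty$ and $\max\hat{s}_{\Omega_\infty}\geq c_1>0$, while $\vol_n(\Omega_\infty)=0$ forces $\Omega_\infty$ into a proper linear hyperplane $H\subset\bbR^n$ through the origin.

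The contradiction now comes from a kink analysis at a unit normal $\nu$ to $H$: for any unit $w\perp\nu$ the one-sided directional derivatives of $\hat{s}_{\Omega_\infty}$ at $\nu$ along $\pm w$ are $\hat{s}_{\Omega_\infty}(w)$ and $-\hat{s}_{\Omega_\infty}(-w)$, and their equality for all such $w$ would force $\Omega_\infty$ to have zero width in every direction of $H$, i.e.\ $\Omega_\infty$ would be a single point; combined with $0\in\Omega_\infty$ this yields $\Omega_\infty=\{0\}$, contradicting $\max\hat{s}_{\Omega_\infty}\geq c_1$. Hence $\hat{s}_{\Omega_\infty}$ cannot be $C^1$ at $\nu$, contradicting its $C^{1,1}$ regularity. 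A standard subsequence argument then yields $\vol_n(\Omega_i)\geq c_3>0$ uniformly, and the cone bound above finishes the proof. The main obstacle is this kink-versus-$C^{1,1}$ dichotomy: one must verify that the uniform $C^2$-bounds on $s_{\Sigma_i}$ descend to $C^{1,1}$-regularity of the Hausdorff limit on $\bbS^{n-1}$, and then check that among lower-dimensional convex sets only a single point is compatible with a $C^{1,1}$ support function.
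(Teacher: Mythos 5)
Your proof is correct, and it takes a genuinely different route to the key step than the paper does; the cone-over-the-base reduction at the start coincides with the paper's final step, but the way you obtain a uniform lower bound on the $n$-volume of the base $\Omega_i$ is new. The paper works directly and quantitatively: from $\max_{\bbS^{n-1}}\hat s_{\Omega_i}\geq c_1$ it gets $R_{\Omega_i}\geq c_1/2$, then invokes \autoref{app: CW} (which rests on the capillary identity $\mathrm{II}_{\alpha\beta}=\hat{\mathrm{II}}_{\alpha\beta}\sin\theta$ and Chou--Wang's inner/outer radius ratio estimate) to conclude $r_{\Omega_i}\geq R_{\Omega_i}^2/(C\max\lambda_n)$ is bounded below, since $\max\lambda_n$ is controlled by the uniform $C^2$-bound on $s_{\Sigma_i}$. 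You instead argue by contradiction and compactness: the uniform $C^2$-bound on $s_{\Sigma_i}$ restricts along $\partial\cC_\theta=\sin\theta\,\bbS^{n-1}$ to a uniform $C^2$-bound on $\hat s_{\Omega_i}$, so Blaschke selection produces a Hausdorff limit $\Omega_\infty$ with $\hat s_{\Omega_\infty}\in C^{1,1}(\bbS^{n-1})$, and a degenerate limit ($\vol_n(\Omega_\infty)=0$) must lie in a hyperplane $H\ni 0$, forcing a non-differentiable point of $\hat s_{\Omega_\infty}$ at the unit normal $\nu$ to $H$ unless $\Omega_\infty=\{0\}$, which $\max\hat s_{\Omega_\infty}\geq c_1$ rules out. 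Your route bypasses \autoref{app: CW} and the boundary second-fundamental-form identity entirely, relying only on the already-available $C^2$ estimate and the classical description of one-sided directional derivatives of support functions via faces, $D^+\hat s_{\Omega_\infty}(\nu;\pm w)=\hat s_{F(\Omega_\infty,\nu)}(\pm w)$. The trade-off is that your argument is non-constructive (a sequential compactness contradiction rather than an explicit inner-radius bound), whereas the paper's yields quantitative control of $r_{\Omega_i}$ directly; both are legitimate, and yours is arguably lighter on capillary-specific machinery.
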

\begin{proof}
Since $\max_{\bbS^{n-1}} \hat{s}_{\Omega_i} \geq c_1 $, $0 \in \operatorname{int}(\Omega_i)$, and 
\eq{
\hat{s}_{\Omega_i}(x) + \hat{s}_{\Omega_i}(-x) \leq \operatorname{diam}(\Omega_i)\leq 2R_{\Omega_i}\q \forall x\in \bbS^{n-1},
} 
we have $R_{\Omega_i} \geq c_1/2$. Therefore, due to \autoref{app: CW}, $r_{\Omega_i}$ is uniformly bounded below away from zero. 

Recall from \autoref{non-collapsing En+1 dirc} that $\hat{s}_{\widehat{\Sigma}_i}(E_{n+1})$ is uniformly bounded below away from zero. Considering the convex hull of the inner ball of $\Omega_i$ with radius $r_{\Omega_i}$ and the point on $\Sigma_i$ whose supporting hyperplane is parallel to $E_{n+1}^{\bot}$ (which forms an oblique cone that can be transformed into a right cone of the same height via a volume-preserving shear transformation that acts as the identity on $x_{n+1} = 0$), we conclude that $V(\widehat{\Sigma}_i) \geq c$ for some $c>0$ and all $i$.  
\end{proof}

\begin{thm}\label{homotopy path}
Let $\theta \in (0,\frac{\pi}{2})$  and $1\leq k< n$. Suppose $0< \phi\in C^{\infty}(\cC_{\theta})$ is a function satisfying
\eq{
\tau[ \phi^{-\frac{1}{k}}]\geq 0,\q  \bar{\nabla}_{\mu}\phi^{-\frac{1}{k}}=\phi^{-\frac{1}{k}}\cot\theta, \q \int_{\cC_{\theta}}  \phi(\zeta)\zeta^i \, d\si=0,\q i=1,\ldots, n.
}
Then for each $t\in [0,1]$, there exists $z_t\in E_{n+1}^{\bot}$, such that
\eq{
 \phi_t(\zeta):=((1-t)\ell(\zeta)+t \phi^{-\frac{1}{k}}(\zeta)-\langle \zeta,z_t\rangle)^{-k}
}
satisfies
\eq{
 \int_{\cC_{\theta}}  \phi_t(\zeta)\zeta^i \, d\si=0,\q i=1,\ldots, n\\
 \tau[ \phi_t^{-\frac{1}{k}}]\geq 0,\q \bar\nabla_{\mu}\phi_t^{-\frac{1}{k}}=\cot \theta \phi_t^{-\frac{1}{k}}.
}
Moreover, $z_0=z_1=0$ and for some constant $C$ we have
\eq{
\frac{1}{C}\leq \phi_t\leq C, \q \forall t\in [0,1].
}
\end{thm}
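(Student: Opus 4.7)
Define $h_t := (1-t)\ell + t\phi^{-1/k}$. A direct computation gives $\tau[\ell] = \sin^2\theta\,\bar g$, so the hypothesis $\tau[\phi^{-1/k}] \geq 0$ yields $\tau[h_t] \geq 0$; the Robin condition $\bar\nabla_\mu h_t = \cot\theta\,h_t$ is preserved by linear combinations; and $h_t$ is uniformly (in $t$) bounded between positive constants since $\ell$ and $\phi^{-1/k}$ are. Choose $\psi(x) = x^{1-k}$ if $k > 1$ and $\psi(x) = -\log x$ if $k = 1$; both lie in the admissible class of \autoref{steiner2} and satisfy $\psi'(x)$ proportional to $x^{-k}$. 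Applying \autoref{steiner2} to $h_t$ produces a unique $z_t \in \operatorname{int}(\Omega_t)$ such that $g_t := h_t - \langle\zeta, z_t\rangle > 0$ on $\cC_\theta$ and $\int_{\cC_\theta} g_t^{-k}\zeta^i\,d\sigma = 0$, $i = 1,\ldots,n$. Setting $\phi_t := g_t^{-k}$, property (1) is immediate. Since $\langle\zeta, z_t\rangle = \sum_{i=1}^n z_t^i\zeta^i$ is capillary (each $\zeta^i$ is) and has $\tau \equiv 0$ as a linear function of $\zeta$, properties (2) and (3) transfer from $h_t$ to $\phi_t^{-1/k} = g_t$. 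The endpoint values $z_0 = z_1 = 0$ follow from the uniqueness clause: at $t = 0$ the integrand $\ell^{-k}\zeta^i$ is odd under $\zeta^i \mapsto -\zeta^i$, a symmetry of $\cC_\theta$ preserving $\ell$; at $t = 1$ the vanishing $\int \phi\zeta^i\,d\sigma = 0$ is the standing hypothesis.

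For the bounds, first $|z_t|$ is uniformly controlled, since $z_t \in \Omega_t$ and \autoref{cap s bound to standard s bound} bounds the convex body associated to $h_t$ in terms of the uniform $C^0$-bound on $h_t$; hence $g_t \leq M$ uniformly, giving the easy estimate $\phi_t \geq M^{-k}$. The \emph{main obstacle} is the reverse inequality $\phi_t \leq C$, equivalently a uniform positive lower bound on $g_t$. The plan is to deduce this from continuity of $t \mapsto z_t$ on $[0,1]$: for each fixed $t$, $g_t > 0$ on the compact set $\cC_\theta$ so $\min_{\cC_\theta} g_t > 0$, and joint $C^0$-continuity of $g_t$ in $(t, \zeta)$ combined with compactness of $[0,1] \times \cC_\theta$ lifts this to the desired uniform lower bound.

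To establish continuity of $z_t$, take a sequence $t_n \to t_0$ and extract a subsequence with $z_{t_n} \to z_*$. Suppose for contradiction $z_* \in \partial\Omega_{t_0}$. Then $g_* := h_{t_0} - \langle\cdot, z_*\rangle$ vanishes somewhere on $\cC_\theta$, so $\cF_{t_0}(z_*) := \int \psi(g_*)\,d\sigma = +\infty$ since $\psi \to +\infty$ at $0^+$. On the other hand, because $h_{t_0} - \langle\zeta, z_{t_0}\rangle \geq c_0 > 0$ and $h_{t_n} \to h_{t_0}$ uniformly, one has $z_{t_0} \in \operatorname{int}(\Omega_{t_n})$ for all large $n$, and the minimizing property of $z_{t_n}$ yields $\cF_{t_n}(z_{t_n}) \leq \cF_{t_n}(z_{t_0}) \to \cF_{t_0}(z_{t_0}) < \infty$. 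Since $g_{t_n}$ is uniformly bounded above, $\psi(g_{t_n}) \geq \psi(M)$ is uniformly bounded below, and Fatou's lemma gives $\cF_{t_0}(z_*) \leq \liminf \cF_{t_n}(z_{t_n}) < \infty$, a contradiction. Hence $z_* \in \operatorname{int}(\Omega_{t_0})$, and the uniqueness in \autoref{steiner2} forces $z_* = z_{t_0}$, closing the continuity argument and hence the proof.
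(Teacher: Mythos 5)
Your overall strategy mirrors the paper's: use Lemma~\ref{steiner}/Theorem~\ref{steiner2} with the same $\psi$ to produce $z_t$, observe that the linear correction $\langle\zeta, z_t\rangle$ preserves the capillary and convexity conditions, identify $z_0=z_1=0$ via uniqueness, and then deduce the two-sided bound on $\phi_t$ from continuity of $t\mapsto z_t$ plus compactness. The treatment of items (1)--(3) and the endpoints is essentially correct (though worth noting: the relevant computation is $\tau[\ell]=\bar g$, not $\sin^2\theta\,\bar g$, and for $z_1=0$ one should first observe that $h_1=\phi^{-1/k}>0$ yields $0\in\operatorname{int}(\Omega_1)$ so that the uniqueness clause of Theorem~\ref{steiner2} indeed applies).

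However, the proof of continuity of $z_t$ contains a genuine gap. You attempt to rule out a boundary limit $z_*\in\partial\Omega_{t_0}$ by asserting that $\cF_{t_0}(z_*)=\int\psi(g_*)\,d\sigma=+\infty$ because $\psi\to\infty$ at $0^+$. This is not justified. For $t_0<1$ the body $\widehat\Sigma_{t_0}$ is smooth and strictly convex, in which case $g_*=h_{t_0}-\langle\cdot,z_*\rangle$ vanishes only at a single point $\zeta_0\in\partial\cC_\theta$, typically to second order in the angular variable. One then has a local model $\int_0^\delta r^{2(1-k)}r^{n-1}\,dr$, which converges whenever $k<n/2$. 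Indeed, the paper's own Lemma~\ref{steiner} explicitly allows $\cF$ to take finite values on $\partial\Omega$ (``suppose without loss of generality that $\cF(y_0)<\infty$''), and the boundary argument there is precisely about showing the derivative into the interior is negative, not that the value blows up. So your intended contradiction never materializes when $\cF_{t_0}(z_*)$ is finite. The correct way to close the argument, which the paper follows, is to combine Fatou with the minimizing property of $z_{t_i}$ to obtain $\cF_{t_0}(z_*)\leq\cF_{t_0}(z_{t_0})$, and then invoke that $z_{t_0}$ is the \emph{unique} minimizer of $\cF_{t_0}$ over $\overline{\Omega}_{t_0}$ --- uniqueness of the global minimizer over the closed domain follows from the one-sided monotonicity established in the proof of Lemma~\ref{steiner} (which shows $\cF$ strictly decreases into the interior from any boundary point with finite value) together with the strict convexity $D^2\cF>0$ in the interior. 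With that substitution your argument becomes correct; as written, the case $\cF_{t_0}(z_*)<\infty$ is left unaddressed.
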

\begin{proof}
For $0\leq t<1$, by \autoref{constr hyper from s}, $s_t:= (1-t)\ell+t \phi^{-\frac{1}{k}}$ is the capillary support function of a strictly convex capillary hypersurface $\Sigma_t$. By \autoref{steiner}, there is a unique point $z_t$ in the interior of $\Omega_t=\widehat{\Sigma}_t\cap E_{n+1}^{\bot}$ so that
\eq{
 \left\{
\begin{array}{ll}
\inf_{v\in \operatorname{int}(\Omega_t)}\int_{\cC_{\theta}}-\log (s_{\Sigma_t-v}) \, d\sigma & k=1, \\
\inf_{v\in \operatorname{int}(\Omega_t)}\int_{\cC_{\theta}} \frac{1}{s^{k-1}_{\Sigma_t-v}} \, d\si & \, k>1 .
\end{array}
\right.
}
is attained. Thus, for $t\in [0,1)$, $s_{\Sigma_t-z_t}= \phi_t^{-\frac{1}{k}}>0$ satisfies the required integral condition and
\eq{\label{req intergral}
 \tau[ \phi_t^{-\frac{1}{k}}]> 0,\q
\bar\nabla_\mu \phi_{t}^{-\frac{1}{k}}=\phi_t^{-\frac{1}{k}}\cot \theta.
}
For $t = 1$, $k \geq 1$, we have the same conclusion by \autoref{steiner2}. Next we show that $z_1=0$. We consider the case $k > 1$. As in the proof of \autoref{steiner2}, the sequence of convex bodies $\{\widehat{\Sigma}_t\}$ converges in the Hausdorff distance to a convex body $\widehat{\Sigma}_1$ with $\hat{s}_{\widehat{\Sigma}_1}(\cdot) = \phi^{-\frac{1}{k}}(\cdot - \cos\theta\, E_{n+1})$.
 Let $\Omega_1=\widehat{\Sigma}_1 \cap E_{n+1}^{\bot}$.
By \eqref{key observations} with $h = \phi^{-\frac{1}{k}}$, we have $\hat{s}_{\Omega_1} > 0$. Therefore, $0 \in \operatorname{int} (\Omega_1)$.  
Since 
\eq{
\int_{\cC_{\theta}} \frac{\zeta_i}{(\phi^{-\frac{1}{k}}(\zeta))^k} \, d\si = 0,
}
and due to uniqueness of $z_1$, we conclude that $z_1 = 0$.

Next we prove $Q: [0,1]\times \cC_{\theta}\to \bbR$ defined as $Q(t,\zeta)=s_{\Sigma_t-z_t}(\zeta)$ is continuous. Here, $\Sigma_1:= \partial\widehat{\Sigma}_1\setminus \operatorname{int}(\Omega_1)$ and $s_{\Sigma_1}:=\phi^{-\frac{1}{k}}$. Suppose $t_i\in [0,1)$ and $t_i\to a$ and $z_i:=z_{t_i}$ converge to $z\neq z_a$ (after passing to a subsequence). Then for $t_i$ sufficiently close to $a$ we have
\eq{
\int_{\cC_{\theta}} \frac{1}{(s_{\Sigma_{t_i}-z_i})^{k-1}} \, d\si<\int_{\cC_{\theta}} \frac{1}{(s_{\Sigma_{t_i}-z_a})^{k-1}} \, d\si,
}
where we used $z_a\in \operatorname{int}(\Omega_{t_i})$. Moreover, there holds
\eq{
\limsup_{t_i\to a}\int_{\cC_{\theta}} \frac{1}{(s_{\Sigma_{t_i}-z_i})^{k-1}} \, d\si&\leq \limsup_{t_i\to a}\int_{\cC_{\theta}} \frac{1}{(s_{\Sigma_{t_i}-z_a})^{k-1}} \, d\si=\int_{\cC_{\theta}} \frac{1}{(s_{\Sigma_{a}-z_a})^{k-1}} \, d\si.
}
On the other hand, by Fatou's lemma,
\eq{
\limsup_{t_i\to a}\int_{\cC_{\theta}} \frac{1}{(s_{\Sigma_{t_i}-z_i})^{k-1}} \, d\si&\geq \liminf_{t_i\to a}\int_{\cC_{\theta}} \frac{1}{(s_{\Sigma_{t_i}-z_i})^{k-1}} \, d\si\geq \int_{\cC_{\theta}} \frac{1}{(s_{\Sigma_{a}-z})^{k-1}} \, d\si.
}
Therefore,
\eq{
\int_{\cC_{\theta}} \frac{1}{(s_{\Sigma_{a}-z_a})^{k-1}} \, d\si\geq \int_{\cC_{\theta}} \frac{1}{(s_{\Sigma_{a}-z})^{k-1}} \, d\si.
}
which violates the uniqueness of $z_a$. The argument for $k=1$ is similar.
 
Since $Q$ is continuous and positive, it is bounded below  away from zero. Moreover, by \autoref{cap s bound to standard s bound}, for $0< t<1$, we have
\eq{
|z_t|\leq \frac{\sin^2 \theta+\max  \phi^{-\frac{1}{k}}}{\sin \theta}.
}
Therefore, $\phi_t^{-\frac{1}{k}}$ is bounded below.
\end{proof}

Let $S_{n}$ be the group of all permutations of $\{1,\ldots,n\}$ and 
\eq{
\operatorname{sgn}:S_{n}\to \{-1,1\}
}
 be defined by $\operatorname{sgn}(\al)=1$ $(-1)$ if $\al$ is even (odd).

For smooth functions $f_i\in C^{2}(\cC_{\theta})$ with $\bar \nabla_{\mu}f_i=\cot \theta f_i$, we define
\eq{
Q[f_1,\ldots,f_n]=\frac{1}{n!}\sum_{\al,\,\beta \in S_n}(-1)^{\operatorname{sgn}(\al)+\operatorname{sgn}(\beta)}\tau^{\sharp}[f_1]_{\al(1)}^{\beta(1)}\cdots \tau^{\sharp}[f_n]_{\al(n)}^{\beta(n)}.
}

Let $s$ be the capillary support function of a strictly convex capillary hypersurface. Note that 
\eq{
\sigma_k(\tau^{\sharp}[s])=c_{k}Q[s,\ldots,s,\ell,\ldots,\ell],
}
where $s$ appears $k$ times. Define the linearized operator 
\eq{
L_{s,k}[\upsilon]=\frac{d}{dt}_{|_{t=0}}\sigma_k(\tau^{\sharp}[s+t\upsilon]),
}
for all capillary functions  $\upsilon\in C^2(\cC_{\theta})$.

\begin{lemma}\label{trivial kernel CM case}
Suppose $L_{s,k}[\upsilon]=0$ where $\upsilon\in C^2(\cC_{\theta})$ is a capillary function. Then $\upsilon(\zeta)=\langle \vec{a},\zeta\rangle$ for some vector $\vec{a}\in E_{n+1}^{\bot}$.
\end{lemma}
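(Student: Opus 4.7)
The plan is to combine self-adjointness of $L_{s,k}$ on capillary functions with the polarisation of $\sigma_k$ and the capillary Alexandrov--Fenchel inequality from \cite{MWWX24} together with its equality case.

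\emph{Self-adjointness and capillary mixed volumes.} First I would verify that $L_{s,k}$ is self-adjoint on capillary functions, i.e.\
\[
\int_{\cC_\theta} f L_{s,k}[g]\,d\sigma = \int_{\cC_\theta} g L_{s,k}[f]\,d\sigma.
\]
Integrating by parts twice, the interior terms balance because the Newton transformation $\sigma_k^{ij}(\tau^{\sharp}[s])$ is divergence-free (a consequence of $\tau[s]$ being a Codazzi tensor on $\bbS^n$), and the boundary contribution collapses to the manifestly symmetric $\cot\theta \int_{\partial\cC_\theta} \sigma_k^{\mu\mu} f g\, d\hat\sigma$: \autoref{key-lemma-capillary-function} forces $\sigma_k^{\mu\alpha}=0$ on $\partial\cC_\theta$ since $\mu$ diagonalises $\tau[s]$ there, and the capillary boundary condition converts $\bar\nabla_\mu g$ to $\cot\theta\,g$. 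Combined with the obvious symmetry of $Q$, this promotes
\[
V(f_1,\ldots,f_{n+1}) := \frac{1}{n+1}\int_{\cC_\theta} f_1 Q[f_2,\ldots,f_{n+1}]\,d\sigma
\]
to a functional symmetric in all $n+1$ capillary arguments.

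\emph{Alexandrov--Fenchel and its equality case.} Using $\sigma_k(\tau^{\sharp}[s]) = c_k Q[s,\ldots,s,\ell,\ldots,\ell]$ (with $s$ appearing $k$ times), one has $L_{s,k}[\upsilon] = k c_k Q[\upsilon,s,\ldots,s,\ell,\ldots,\ell]$ ($s$ now $k-1$ times). Testing the hypothesis $L_{s,k}[\upsilon]=0$ against $f=\upsilon$ and $f=s$ and invoking the symmetry of $V$ yields
\[
V(\upsilon,\upsilon,s,\ldots,s,\ell,\ldots,\ell) = 0, \qquad V(\upsilon,s,s,\ldots,s,\ell,\ldots,\ell) = 0.
\]
The capillary Alexandrov--Fenchel inequality from \cite{MWWX24},
\[
V(\upsilon,s,\ldots)^2 \geq V(\upsilon,\upsilon,\ldots)\cdot V(s,s,\ldots,s,\ell,\ldots,\ell),
\]
together with strict positivity of the rightmost factor (by strict convexity of $\Sigma$), is then saturated. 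The equality characterisation in the smooth strictly convex regime---the capillary analogue of Schneider's classical result---gives $\upsilon = \alpha s + \eta$ for some $\alpha \in \bbR$ and a capillary function $\eta$ with $\tau[\eta] = 0$.

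\emph{Classification and conclusion.} The general solution of $\bar\nabla^2\eta + \eta\bar g = 0$ on the round sphere is $\eta(\zeta) = \langle \vec b, \zeta + \cos\theta\,E_{n+1}\rangle$ with $\vec b \in \bbR^{n+1}$; a short boundary calculation using $\mu = \sin\theta^{-1}(\cos\theta\,y - E_{n+1})$ at $\partial \bbS^n_\theta$ shows that the capillary condition $\bar\nabla_\mu \eta = \cot\theta\,\eta$ forces $b_{n+1}=0$, so $\eta(\zeta) = \langle \vec a, \zeta\rangle$ with $\vec a \in E_{n+1}^\perp$. To eliminate $\alpha$, note that $\tau[\eta]=0$ makes $Q[\eta,\cdot,\ldots]$ vanish pointwise, so $L_{s,k}[\eta]=0$. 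Therefore
\[
0 = L_{s,k}[\upsilon] = \alpha L_{s,k}[s] = \alpha\, k\, \sigma_k(\tau^{\sharp}[s]),
\]
and $\sigma_k(\tau^{\sharp}[s]) > 0$ by strict convexity forces $\alpha=0$. Hence $\upsilon(\zeta) = \langle \vec a, \zeta\rangle$ with $\vec a \in E_{n+1}^\perp$. The main obstacle is Step 2: the equality characterisation of the capillary Alexandrov--Fenchel inequality in the smooth strictly convex setting. While the inequality itself is established in \cite{MWWX24}, the capillary analogue of Schneider's equality characterisation may require a careful adaptation to the boundary geometry; as an alternative one could argue directly at the PDE level via the equality case of the pointwise Gårding inequality combined with a unique-continuation argument propagating from the boundary into the interior.
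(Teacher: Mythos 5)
Your approach is essentially the one the paper uses: both proofs pass through the symmetry of the polarized expression $V$ (giving self-adjointness of $L_{s,k}$), observe that $L_{s,k}[\upsilon]=0$ forces $V(\upsilon,\upsilon,s,\ldots,s,\ell,\ldots,\ell)=V(\upsilon,s,\ldots,s,\ell,\ldots,\ell)=0$, invoke the capillary (local) Aleksandrov--Fenchel inequality of \cite[Thm.~3.1]{MWWX24} to conclude equality is saturated, use the equality characterization to write $\upsilon$ as a multiple of $s$ plus a ``translation'' term, and finally use $\sigma_k(\tau^{\sharp}[s])>0$ to kill the $s$-coefficient. The small organisational differences are cosmetic: the paper states the equality case directly as $\upsilon = a s + \langle\vec{a},\zeta\rangle$ with $\vec{a}\in E_{n+1}^\bot$ (so the classification of the kernel of $\tau$ among capillary functions is absorbed into the cited theorem), whereas you work it out from $\tau[\eta]=0$; and the paper kills $a$ by observing $Q_k[\upsilon]=0$ directly, whereas you phrase the same computation through $L_{s,k}[s]=k\,\sigma_k$.

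One point you flagged as a potential obstacle---the equality characterisation of the capillary Alexandrov--Fenchel inequality---is in fact part of the result the paper cites (\cite[Thm.~3.1]{MWWX24}), so your hedging there is unnecessary in the present framework; no independent rigidity argument via unique continuation is needed. Apart from that, your argument is correct and matches the paper's proof in all essentials.
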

\begin{proof}
Let us define $Q_k[\upsilon] = Q[\upsilon, s, \ldots, s, \ell, \ldots, \ell]$, where $\ell$ appears $n-k$ times. Then, for any $\upsilon \in C^2(\cC_{\theta})$ with $\bar \nabla_{\mu} \upsilon = \cot \theta \upsilon$, by the (local) Aleksandrov-Fenchel inequality \cite[Thm. 3.1]{MWWX24}, we have
\eq{
V^2(\upsilon,s,\ldots,s,\ell,\ldots,\ell)^2\geq V(\upsilon,\upsilon,s,\ldots,s, \ell,\ldots,\ell)V(s,\ldots,s,\ell,\ldots,\ell),
}
where $\ell$ appears $n-k$ times on the left-hand and right-hand sides. That is,
\eq{\label{local AF}
\left(\int_{\cC_{\theta}} \upsilon Q_k[s]  \, d\si\right)^2 \geq \int_{\cC_{\theta}} \upsilon Q_k[\upsilon] \, d\si \int_{\cC_{\theta}} s Q_k[s] \, d\si.
}
Moreover, equality holds if and only if 
\eq{
\upsilon(\zeta) = a s(\zeta) + \langle \vec{a}, \zeta \rangle
} 
for some vector $\vec{a} \in E_{n+1}^{\bot}$ and constant $a\geq 0$.

Now note that
\eq{
\sigma_k(\tau^{\sharp}[s + t \upsilon]) = c_k Q[s + t \upsilon, \ldots, s + t \upsilon, \ell, \ldots, \ell],
}
where $s + t \upsilon$ appears $k$ times. Hence,
\eq{
0 = L_{s,k}[\upsilon] = k c_{k} Q_k[\upsilon].
}
Since $\int_{\cC_{\theta}} \upsilon Q_k[s] \, d\si = \int_{\cC_{\theta}} s Q_k[\upsilon]\,  d\si= 0$, we have equality in \eqref{local AF}, and the claim follows (from $Q_k[\upsilon] = 0$, we deduce that $a = 0$).
\end{proof}

In virtue of \autoref{prop: reg estim 2} and \autoref{trivial kernel CM case}, we can proceed as follows to finish the existence of solutions as claimed in \autoref{Min P},  \autoref{CM-thm} and \autoref{CM-thm1}. We start with the first two theorems. Let $t\in [0,1]$. Define $\phi_t=1-t+t\phi$ for $k=n$ and $\phi_t=(1-t+t\phi^{-\frac{1}{k}})^{-k}$ for $1\leq  k<n$.  Recall that when $1\leq k<n$ additionally we assumed 
\eq{
\phi(-\zeta_1,\ldots, -\zeta_n,\zeta_{n+1})=\phi(\zeta_1,\ldots,\zeta_n, \zeta_{n+1}) ,\q \forall \zeta\in \cC_{\theta}.
}
Therefore, we have
\eq{
\int_{\cC_\theta} \langle \zeta,E_{i}\rangle \phi_t(\zeta) \, d\si =0, \quad i=1,2,\ldots,n, 
}
and when $1\leq k<n$, we also have
\eq{
\bar \nabla^2 \phi_t^{-\frac{1}{k}}+\bar g \phi_t^{-\frac{1}{k}}\geq 0,\q\bar\nabla_{\mu} \phi_t^{-\frac{1}{k}}\leq \cot\theta \phi_t^{-\frac{1}{k}}.
}

Define, for fixed $m$,
\eq{
\cA_k = \left\{ t \in [0,1]: \exists s_t \in C^{m+2,\alpha}(\mathcal{C}_{\theta})~ \text{with}~  
\begin{aligned}
    &\sigma_k(\tau[s_t],\bar{g}) = \phi_t, \\
    &\bar{\nabla}_{\mu} s_t = \cot \theta  s_t,\,\tau[s_t] > 0, \\
    &\int_{\mathcal{C}_{\theta}} s_t(\zeta) \zeta_i \, d\sigma = 0,\,\forall i = 1, \dots, n
\end{aligned} \right\}.
}
\emph{Case $k=n$}: Due to \autoref{prop: reg estim 2}, the set $\cA_n$ is closed. 
Regarding the openness, define the Banach spaces 
\eq{
B_1=\{s\in C^{m+2,\alpha}(\cC_{\theta}):\, \bar \nabla_{\mu}s=\cot \theta s,\q \int_{\cC_{\theta}} s(\zeta)\zeta_id\si =0,\q \forall i=1,\ldots n \}
}
and 
\eq{
B_2=\{\phi\in C^{m,\alpha}(\cC_{\theta}):\,\int_{\cC_{\theta}} \phi(\zeta)\zeta_id\si =0,\q \forall i=1,\ldots n \}.
}
Due to \autoref{trivial kernel CM case}, the self-adjointness of $L_{s,n}$ and by the implicit function theorem, the transformation
\begin{align*}
\Phi: B_1\to B_2,\q \Phi(s)=\sigma_n(\tau[s],\bar g)
\end{align*}
is locally invertible at any $s$ with $\tau[s]>0$, provided $m$ is sufficiently large. Hence, $\cA_n$ is open. Since $0\in \cA_n$, we have $\cA_n=[0,1]$ and there exists a strictly convex solution to $\sigma_n(\tau^{\sharp}[s]) = \phi$.

\emph{Case $1\leq k<n$}: 
 Let us write $C_{e}(\cC_{\theta})$ for the set of functions $f$ in $C(\cC_{\theta})$ with 
\eq{
f(-\zeta_1,\ldots, -\zeta_n,\zeta_{n+1})=f(\zeta_1,\ldots,\zeta_n, \zeta_{n+1}),\q \forall \zeta \in \cC_{\theta}
} 
 and $C_e^{m,\alpha}(\cC_{\theta})=C^{m,\alpha}(\cC_{\theta})\cap C_e(\cC_{\theta})$. Now define
 
\eq{
S = \left\{ t \in [0,1]: \exists s_t \in C_e^{m+2,\alpha}(\mathcal{C}_{\theta})~ \text{with}~  
\begin{aligned}
    &\sigma_k(\tau[s_t],\bar{g}) = \phi_t, \\
    &  \quad \tau[s_t] > 0,\\
    &\bar{\nabla}_{\mu} s_t = \cot \theta \, s_t
\end{aligned} \right\}
}

and
\eq{
B_1&=\{s\in C_e^{m+2,\alpha}(\cC_{\theta}):\, \bar \nabla_{\mu}s=\cot \theta s\},\q
B_2= C_e^{m,\alpha}(\cC_{\theta}),\\
\Ph&: B_1\to B_2,\q \Phi(s)=\sigma_k(\tau[s],\bar g).
}
Then $\phi$ is locally invertible at any $s$ with $\tau[s]>0$ and thus $S$ is open.

To show that $S$ is closed, we need the full rank theorem. Suppose $t_i  \in S$ with $t_i \to t_0$. Let $\Sigma_{i}$ denote the strictly convex capillary hypersurface with the capillary support function $s_{t_i}$ (see \autoref{constr hyper from s}) and put $\Omega_{i}=\widehat{\Sigma}_{i}\cap E_{n+1}^{\bot}$. By \autoref{prop: reg estim 2}, after passing to a subsequence, $\{\widehat{\Sigma}_i\}$ converges in the Hausdorff distance to a convex set $\widehat{\Sigma}$, and $s_{t_i}$ converges to a solution $s\geq 0$ of $\sigma_k(\tau^{\sharp}[s]) = \phi_{t_0}$ with $\tau[s] \geq 0$. 

Note that for all $i$ we have 
\eq{
s_{\Sigma_i}(\zeta)=\hat{s}_{\widehat{\Sigma}_i}(\zeta+\cos\theta E_{n+1}),\q \forall \zeta\in \cC_{\theta}
}
and after taking the limit, we find
\eq{
s(\zeta)=\hat{s}_{\widehat{\Sigma}}(\zeta+\cos\theta E_{n+1}),\q \forall \zeta\in \cC_{\theta}. 
}

By \autoref{non-collapsing En+1 dirc},  there exists $w\in  \partial\widehat{\Sigma}$ with $\langle w,E_{n+1}\rangle > 0$. Hence, 
\eq{
\hat{s}_{\widehat{\Sigma}}(x)\geq \langle x,w\rangle, \q \forall x\in \bbS^n.
}
If $w = E_{n+1}$, then for any $x_0 \in \partial \bbS^n_{\theta}$, we have $\hat{s}_{\widehat{\Sigma}}(x_0) > 0$. If $w \neq E_{n+1}$, then for $x_0 = \sin \theta\, \tilde{w} + \cos \theta\, E_{n+1} \in \partial \bbS^n_{\theta}$, we also have $\hat{s}_{\widehat{\Sigma}}(x_0) > 0$. Hence, we conclude that 
$
\max s|_{\partial \cC_{\theta}} > 0.
$
This implies that $\max_{\bbS^{n-1}} \hat{s}_{\Omega_i} \geq \delta$ for some $\delta > 0$ and all $i$. Now, from \autoref{lower support bound steiner}, it follows that $V(\widehat{\Sigma}) \geq c$. Since $s \in C_e(\cC_{\theta})$, we conclude that $s > 0$, and from \autoref{thm: frt}, it follows that $\tau[s] > 0$. Therefore, $t_0 \in S$.

Next, we prove the uniqueness claims. Let $s_1, s_2 \in C^3(\cC_{\theta})$ be two capillary functions with $\tau[s_i] > 0$. In view of \cite[Thm. 1.1]{MWWX24}, the sequence
\eq{
a_i=V(\widehat{\Sigma}_2[k+1-i],\widehat{\Sigma}_1[i],\widehat{\cC}_{\theta},\ldots,\widehat{\cC}_{\theta}), \q i=0,\ldots, k+1,
}
is log-concave i.e.,
\eq{
\frac{a_1}{a_0}\geq \frac{a_2}{a_1}\geq\cdots \geq \frac{a_{k+1}}{a_k}.
}
Here, $\widehat{\Sigma}_i[\cdot]$ indicates the number of times $\widehat{\Sigma}_i$ appears. Moreover, equality holds in either of these inequalities if and only if  
\eq{
\widehat{\Sigma}_2 = a \widehat{\Sigma}_1+ \vec{a}
}  
for some $\vec{a}\in E_{n+1}^{\bot}$ and $a>0$. Therefore, we have
\eq{\label{uniqueness}
\left(\frac{a_k}{a_0}\right)^{\frac{1}{k}}\geq \left(\frac{a_{k+1}}{a_0}\right)^{\frac{1}{k+1}}.
}
That is,
\eq{
\int_{\cC_{\theta}} s_2 \sigma_k(\tau^{\sharp}[s_1]) \, d\si \geq \left(\int_{\cC_{\theta}} s_1 \sigma_k(\tau^{\sharp}[s_1]) \, d\si\right)^{\frac{k}{k+1}} \left(\int_{\cC_{\theta}} s_2 \sigma_k(\tau^{\sharp}[s_2]) \, d\si\right)^{\frac{1}{k+1}}.
}  
Consequently, if $\sigma_k(\tau^{\sharp}[s_1]) = \sigma_k(\tau^{\sharp}[s_2])$, equality in \eqref{uniqueness} follows and thus 
\eq{
s_2(\zeta) = s_1(\zeta) + \langle \vec{a}, \zeta \rangle.
}  
Furthermore, concerning the uniqueness claim in \autoref{CM-thm}, if both of these solutions are in $C_e(\cC_{\theta})$, then we must have $\vec{a} = 0$.

Now, we prove \autoref{CM-thm1}. We define $\phi_t$ as in \autoref{homotopy path}:
\eq{
 \phi_t(\zeta):=((1-t)\ell+t \phi^{-\frac{1}{k}}(\zeta)-\langle \zeta,z_t\rangle)^{-k}.
}
For $t=0$, $z_0=0$ and $\ell^{-k}$ satisfies the assumptions of \autoref{CM-thm}. Therefore, the equation
$
\sigma_k(\tau[s_0],\bar{g})=\ell^{-k}
$
has a (unique) strictly convex capillary solution. That is, $0\in \cA_k$. 

To prove that $\cA_k$ is closed, we make a minor adjustment to the final part of the closedness argument for \autoref{CM-thm}. The function $s$ that we obtained as the limit of $s_{t_i}$ is non-negative. If necessary, since $V(\widehat{\Sigma})>0$, we may translate $\widehat{\Sigma}$ to $\widehat{\Sigma} - v$ for some $v \in \operatorname{int}(\widehat{\Sigma} \cap E_{n+1}^{\bot})$ so that $s_{\Sigma - v} > 0$, while $s_{\Sigma - v}$ still satisfies the equation $\sigma_k(\tau^{\sharp}[s_{\Sigma - v}]) = \phi_{t_0}$.

\section*{Acknowledgment}
Hu was supported by National Key Research and Development Program of China 2021YFA1001800 and the Fundamental Research Funds for the Central Universities. Ivaki was supported by the Austrian Science Fund (FWF) under Project P36545.
 
We would like to thank Georg Hofst\"{a}tter, Jonas Kn\"{o}rr and Fabian Mu\ss nig for helpful discussions.

	\vspace{10mm}
	\textsc{School of Mathematical Sciences, Beihang University,\\ Beijing 100191, China,\\}
	\email{\href{mailto:huyingxiang@buaa.edu.cn}{huyingxiang@buaa.edu.cn}}
	
		\vspace{5mm}
\textsc{Institut f\"{u}r Diskrete Mathematik und Geometrie,\\ Technische Universit\"{a}t Wien,\\ Wiedner Hauptstra{\ss}e 8-10, 1040 Wien, Austria,\\} \email{\href{mailto:mohammad.ivaki@tuwien.ac.at}{mohammad.ivaki@tuwien.ac.at}}
	
	\vspace{5mm}
	\textsc{Institut f\"{u}r Mathematik, Goethe-Universit\"{a}t,\\ Robert-Mayer-Str.10, 60325 Frankfurt, Germany,\\ }
	\email{\href{mailto:scheuer@math.uni-frankfurt.de}{scheuer@math.uni-frankfurt.de}}
\end{document}